\newcommand{\2}{\vspace{0.15cm}}
\newcommand{\sX}[1]{\hspace{-0.1cm}{\footnotesize #1} \hspace{-0.2cm}}
\newcommand{\tX}[1]{\hspace{-0.05cm}{\footnotesize #1} \hspace{-0.1cm}}
\newcommand{\smallQED}{\hfill {\tiny ($\Box$)}}
\newcommand{\Mac}{\mathrm{mac}}
\renewcommand{\circ}{\mathrm{circ}}
\newtheorem{theorem}{Theorem}[section]
\newtheorem{lemma}[theorem]{Lemma}
\newtheorem{corollary}[theorem]{Corollary}
\newtheorem{definition}[theorem]{Definition}
\newtheorem{proposition}[theorem]{Proposition}
\newtheorem{openProblem}[theorem]{Open Problem}
\newcommand{\AZ}[1]{{#1}}
\begin{document}
	
	\title{Bounds on Maximum Weight Directed Cut}
\author{
Jiangdong Ai\thanks{School of Mathematical Sciences and LPMC, Nankai University. {\tt JiangdongAi95@gmail.com}.} \hspace{2mm} Stefanie Gerke\thanks{Department of Mathematics. Royal Holloway University of London.  {\tt stefanie.gerke@rhul.ac.uk}.} \hspace{2mm} Gregory Gutin\thanks{Department of Computer Science. Royal Holloway University of London. {\tt g.gutin@rhul.ac.uk}.} \\ Anders Yeo\thanks {Department of Mathematics and Computer Science, University of Southern Denmark. {\tt andersyeo@gmail.com}, and Department of Mathematics, University of Johannesburg.} \hspace{2mm} Yacong Zhou\thanks{Department of Computer Science. Royal Holloway University of London. {\tt Yacong.Zhou.2021@live.rhul.ac.uk}.} }
	\date{}
	\maketitle
	
	\begin{abstract}
		We obtain lower and upper bounds for the maximum weight of a directed cut in the classes of
		weighted digraphs and weighted acyclic  digraphs as well as in some of their subclasses. We compare our 
		results with those obtained for the maximum size of a directed cut in unweighted digraphs. In particular, we 
		show that a lower bound obtained by Alon, Bollob\'as, Gy\'{a}f\'{a}s, Lehel and Scott (J Graph Th 55(1) (2007)) for unweighted acyclic digraphs can be extended to weighted digraphs
		with the maximum length of a cycle being \AZ{bounded by a constant and the weight of every arc being at least one}. 
		We state a number of open problems.
	\end{abstract}
	
	\pagestyle{plain}
	\section{Introduction}\label{sec:intro}   
	
	Let $D=(V(D),A(D),w_D)$ be a weighted digraph with weight function $w_D:\ A(D) \to \mathbb{R}_+,$ where $\mathbb{R}_+$ is the set of non-negative reals. For a subgraph $H$ of $D$, $w_D(H)$ denotes the sum of weights of arcs in $H.$ (In what follows, we will omit subscripts identifying directed or undirected graphs if these graphs are clear from the context.) Let $X$ and $Y$ be a partition of $V(D).$ Then the {\em directed cut} (or {\em dicut} for short) $(X,Y)$ of $D$ is the bipartite subgraph of $D$ induced by the arcs going from $X$ to $Y$; its weight is denoted by $w(X,Y).$ 
	The aim of the {\sc  Maximum Weight Directed Cut} problem is to find a directed cut of $D$ of maximum weight. 
	This weight will be denoted by ${\rm mac}(D).$ Analogously, the weight of a maximum cut in an undirected graph $G$ will be denoted by ${\rm mac}(G).$ 
	
	Clearly, {\sc  Maximum Weight Directed Cut} is a generalization of {\sc  Maximum Weight Cut} for undirected graphs and thus {\sf NP}-hard \cite{Karp1972}. While there is a large number of papers on lower bounds for  {\sc  Maximum Cut} (see e.g. \cite{Alon,ABKS,Edw,Lau,PT1986}), where the weight of each edge is 1, 
	as far as we know there are only two papers on lower bounds for {\sc  Maximum Weight Cut}: the well-known paper \cite{PT1986} of Poljak and Turz\'ik and the very recent paper \cite{GY2021} of Gutin and Yeo. While there are papers on {\sc  Maximum Directed Cut}, see e.g. Alon, Bollob\'{a}s, Gy\'{a}f\'{a}s, Lehel and  Scott \cite{Alon2006}; Lehel, Maffray and Preissman \cite{LMP2009}; Xu and Yu \cite{XuYu}; and Chen, Gu and Li \cite{CGL2014}, as far as we know, our paper is the first on lower bounds for {\sc  Maximum Weight Directed Cut}. \smallskip

	%Recently, Gutin and Yeo  studied lower bounds of ${\rm mac}(G)$ and generalized some classic results of  \cite{PT1986}. However, these two papers are the only literatures we know working on a weighted version of maximum cuts problems. A natural question one may ask is: for any weighted digraph $D$, what is maximum weighted cut of $D$? Although this {\scshape Maximum Weighted Directed Cut} problem sounds more general than the {\scshape Maximum Weighted Cut} problem since every graph can be transformed into a digraph by replacing every edge with two symmetric arcs. We will show that they are mutually polynomially reducible (see Section \ref{sec: reductions}). As finding a maximum weighted directed cut is an NP-hard problem, it is interesting to study the lower bound of ${\rm mac}(D)$.
	
	%As far as we know, no one has worked on the lower bound of maximum weighted directed cuts. But there are some papers working on the lower bound of maximum directed cuts for some classes of digraphs. In 2006, Alon et al. \cite{Alon2006} studied the lower bound of ${\rm mac}(D)$ when $D$ is an acyclic digraph or digraph with degree restrictions. For other related results, we refer the readers to \cite{CGL2014, LMP2009}. In this paper, we study the lower bound of the maximum weighted directed cuts for both general digraphs and acyclic digraphs. Some of our results generalized those of Alon et al. \cite{Alon2006}.
	
	% Before overviewing the main results  of this paper, we will introduce additional notation and terminology.  
	
	%In what follows, we will often shorten ``directed cut" to "dicut".     
	
	For any $v\in V(D)$, let $w^+(v)$ be the sum of weights of arcs leaving $v$, $w^-(v)$ the sum of weights of arcs entering $v$, and $r(v)=w^+(v)-w^-(v)$. Note that $\sum_{v\in V(D)}r(v)=0.$ Let \[ r^+(D)=\sum_{r(x)>0} r(x)=\sum_{x \in V(D)} |r(x)|/{2}.\]
	Note that for any cut $(X,Y)$, we have 
	\begin{equation} \label{eq:plus}
		\sum_{x\in X} r(x) =w_D(X,Y)-w_D(Y,X) \leq \mathrm{mac}(D).
	\end{equation}
	By choosing $X$ to be the set of all vertices $v$ with $r(v)>0$, one immediately obtains that 
	\begin{equation} \label{eq:easybound} r^+(D) \leq \mathrm{mac}(D). \end{equation} %Let $w(D)$ be the sum of the weights of all arcs in $D$.

In the rest of this section we first provide an overview of the paper and then additional terminology and notation. 	
	 
\paragraph{Paper overview}	
	In Section \ref{sec: general bounds}, \AZ{we prove basic extensions to \eqref{eq:easybound} and use these bounds to show some relations between $\mathrm{mac}(D)$ and the weight of maximum cuts of the underlying graph of $D$. Then, we use random partitions to prove lower bounds for some classes of digraphs. For example, we obtain a tight lower bound on the maximum weight of a dicut for weighted digraphs with bounded maximum semidegrees, which generalizes a result of Alon, Bollob\'as, Gy\'{a}f\'{a}s, Lehel and Scott  \cite{Alon2006}. We also show  a useful analog of a result proved by Gutin and Yeo \cite{GY2021} for dicuts. Later in the paper, we will use the analog in the proof of our main result.}  
	
	Let $\theta(D)=r^+(D)/w(D)$, and  for all $0 \leq \theta \leq 1$, let  $l(\theta)$ be
	\[
	l(\theta)
	= \left\{ \begin{array}{lll} \vspace{0.1cm}
		\left( \frac{1}{4} + \frac{\theta^2}{4(1-2\theta)} \right) & & \mbox{if $\theta < 1/3$;}  \\
		\theta & & \mbox{if $\theta \geq 1/3$.} \\
	\end{array} \right.
	\]
	\AZ{The function $l(\theta)$ is depicted in Fig. \ref{fig:l}.} 
	\AZ{We show that if $\theta(D)\geq 1/3$ then the bound $\Mac(D)\geq l(\theta(D)) w(D)$ (equivalent to  (\ref{eq:easybound})) is best possible as there are digraphs attaining this bound.}
	For $\theta(D)<1/3$ we prove that $\Mac(D)\geq l(\theta(D)) w(D)$ and that for all $\epsilon >0$, there exists a digraph  $D$ with $\Mac(D) <(1+\epsilon) l(\theta(D)) w(D)$.
	
	\begin{figure}
		\begin{center}
			\tikzstyle{vertexB}=[rectangle,draw, minimum size=8pt, scale=0.9, inner sep=0.9pt]
			\tikzstyle{vertexA}=[circle,draw, minimum size=8pt, scale=0.9, inner sep=0.9pt]
			\tikzstyle{vertexAs}=[circle,draw, minimum size=8pt, scale=0.6, inner sep=0.9pt]
			\begin{tikzpicture}[scale=4.5]
				
				%  \draw (0,5) rectangle (3,9); \node at (1.5,7) {$X_A$};
				
				\node at (-0.1,0.5) {{\tiny 0.5}};
				\node at (-0.1,1) {{\tiny 1.0}};
				
				\node at (0.5,-0.08) {{\tiny 0.5}};
				\node at (1.0,-0.08) {{\tiny 1.0}};
				
				\node at (1.2,0) {{\footnotesize $\theta$}};

				\draw[line width=0.03cm] (-0.02,0.1) to (0.02,0.1);
				\draw[line width=0.03cm] (-0.02,0.2) to (0.02,0.2);
				\draw[line width=0.03cm] (-0.02,0.3) to (0.02,0.3);
				\draw[line width=0.03cm] (-0.02,0.4) to (0.02,0.4);
				\draw[line width=0.03cm] (-0.02,0.5) to (0.02,0.5);
				\draw[line width=0.03cm] (-0.02,0.6) to (0.02,0.6);
				\draw[line width=0.03cm] (-0.02,0.7) to (0.02,0.7);
				\draw[line width=0.03cm] (-0.02,0.8) to (0.02,0.8);
				\draw[line width=0.03cm] (-0.02,0.9) to (0.02,0.9);
				\draw[line width=0.03cm] (-0.02,1.0) to (0.02,1.0);

				\draw[line width=0.03cm] (0.1,-0.02) to (0.1,0.02);
				\draw[line width=0.03cm] (0.2,-0.02) to (0.2,0.02);
				\draw[line width=0.03cm] (0.3,-0.02) to (0.3,0.02);
				\draw[line width=0.03cm] (0.4,-0.02) to (0.4,0.02);
				\draw[line width=0.03cm] (0.5,-0.02) to (0.5,0.02);
				\draw[line width=0.03cm] (0.6,-0.02) to (0.6,0.02);
				\draw[line width=0.03cm] (0.7,-0.02) to (0.7,0.02);
				\draw[line width=0.03cm] (0.8,-0.02) to (0.8,0.02);
				\draw[line width=0.03cm] (0.9,-0.02) to (0.9,0.02);
				\draw[line width=0.03cm] (1,-0.02) to (1,0.02);

				\draw[->, line width=0.03cm] (0.0,0.0) to (0.0,1.1);
				\draw[->, line width=0.03cm] (0.0,0.0) to (1.1,0.0);

				\draw[line width=0.02cm] (0.0,0.25) to (0.01,0.25002551020408165); 
				\draw[line width=0.02cm] (0.01,0.25002551020408165) to (0.02,0.2501041666666667); 
				\draw[line width=0.02cm] (0.02,0.2501041666666667) to (0.03,0.2502393617021277); 
				\draw[line width=0.02cm] (0.03,0.2502393617021277) to (0.04,0.25043478260869567); 
				\draw[line width=0.02cm] (0.04,0.25043478260869567) to (0.05,0.25069444444444444); 
				\draw[line width=0.02cm] (0.05,0.25069444444444444) to (0.060000000000000005,0.2510227272727273); 
				\draw[line width=0.02cm] (0.060000000000000005,0.2510227272727273) to (0.07,0.25142441860465115); 
				\draw[line width=0.02cm] (0.07,0.25142441860465115) to (0.08,0.2519047619047619); 
				\draw[line width=0.02cm] (0.08,0.2519047619047619) to (0.09,0.2524695121951219); 
				\draw[line width=0.02cm] (0.09,0.2524695121951219) to (0.09999999999999999,0.253125); 
				\draw[line width=0.02cm] (0.09999999999999999,0.253125) to (0.10999999999999999,0.25387820512820514); 
				\draw[line width=0.02cm] (0.10999999999999999,0.25387820512820514) to (0.11999999999999998,0.25473684210526315); 
				\draw[line width=0.02cm] (0.11999999999999998,0.25473684210526315) to (0.12999999999999998,0.25570945945945944); 
				\draw[line width=0.02cm] (0.12999999999999998,0.25570945945945944) to (0.13999999999999999,0.25680555555555556); 
				\draw[line width=0.02cm] (0.13999999999999999,0.25680555555555556) to (0.15,0.2580357142857143); 
				\draw[line width=0.02cm] (0.15,0.2580357142857143) to (0.16,0.25941176470588234); 
				\draw[line width=0.02cm] (0.16,0.25941176470588234) to (0.17,0.2609469696969697); 
				\draw[line width=0.02cm] (0.17,0.2609469696969697) to (0.18000000000000002,0.26265625); 
				\draw[line width=0.02cm] (0.18000000000000002,0.26265625) to (0.19000000000000003,0.26455645161290325); 
				\draw[line width=0.02cm] (0.19000000000000003,0.26455645161290325) to (0.20000000000000004,0.26666666666666666); 
				\draw[line width=0.02cm] (0.20000000000000004,0.26666666666666666) to (0.21000000000000005,0.2690086206896552); 
				\draw[line width=0.02cm] (0.21000000000000005,0.2690086206896552) to (0.22000000000000006,0.2716071428571429); 
				\draw[line width=0.02cm] (0.22000000000000006,0.2716071428571429) to (0.23000000000000007,0.27449074074074076); 
				\draw[line width=0.02cm] (0.23000000000000007,0.27449074074074076) to (0.24000000000000007,0.27769230769230774); 
				\draw[line width=0.02cm] (0.24000000000000007,0.27769230769230774) to (0.25000000000000006,0.28125); 
				\draw[line width=0.02cm] (0.25000000000000006,0.28125) to (0.26000000000000006,0.28520833333333334); 
				\draw[line width=0.02cm] (0.26000000000000006,0.28520833333333334) to (0.2700000000000001,0.28961956521739135); 
				\draw[line width=0.02cm] (0.2700000000000001,0.28961956521739135) to (0.2800000000000001,0.29454545454545455); 
				\draw[line width=0.02cm] (0.2800000000000001,0.29454545454545455) to (0.2900000000000001,0.30005952380952383); 
				\draw[line width=0.02cm] (0.2900000000000001,0.30005952380952383) to (0.3000000000000001,0.3062500000000001); 
				\draw[line width=0.02cm] (0.3000000000000001,0.3062500000000001) to (0.3100000000000001,0.3132236842105264); 
				\draw[line width=0.02cm] (0.3100000000000001,0.3132236842105264) to (0.3200000000000001,0.32111111111111124); 
				\draw[line width=0.02cm] (0.3200000000000001,0.32111111111111124) to (0.3300000000000001,0.3300735294117648); 
				\draw[line width=0.02cm] (0.3300000000000001,0.3300735294117648) to (0.3333333333333333,0.3333333333333333); 
				\draw[line width=0.02cm] (0.3333333333333333,0.3333333333333333) to (1.0,1.0); 
				%\node at (1,0) {\mbox{Â }};
			\end{tikzpicture}
			\caption{The function $l(\theta)$.} \label{fig:l}
	\end{center} \end{figure}
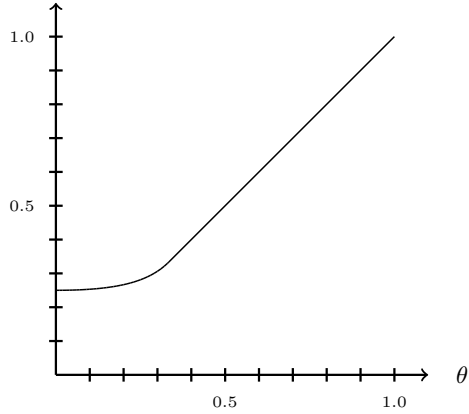

	\smallskip

	In Section \ref{sec: bounds for acyclic directed multigraphs}, we consider acyclic digraphs.	
	Let $h(m)$ be the maximum integer such that each acyclic digraph with $m$ arcs has a dicut of size at least $h(m)$. The main results of Alon, Bollob\'as, Gy\'{a}f\'{a}s, Lehel and Scott  in \cite{Alon2006} are\footnote{Note that  \cite{Alon2006} has a typo in Theorem 2 claiming that $h(m)=m/4+\Omega(m^{2/3})$ instead of $h(m)=m/4+\Omega(m^{3/5})$; see  \cite{Alon2006p}, where Theorem 2 is stated correctly \cite{G}.}
	$h(m)=m/4+O(m^{4/5})$ and $h(m)=m/4+\Omega(m^{3/5})$. Here we consider similar functions for acyclic directed multigraphs and weighted acyclic digraphs. 
	Let $h_{\mu}(m)$ be the maximum integer such that each acyclic directed multigraph with $m$ arcs has a dicut of size at least $h_{\mu}(m)$. \AZ{Let ${\cal D}_{\omega}$ be the set of weighted digraphs in which every arc has weight at least 1 and let ${\cal D}_{\omega}(w)$ be
	the subset of ${\cal D}_{\omega}$ such that $w=w(D)$ for each $D\in {\cal D}_{\omega}(w).$
	}
Then for each $w\ge 1$, let $h_{\omega}(w)$ be the supremum of reals $g$ such that every acyclic \AZ{$D\in {\cal D}_{\omega}(w)$} has a dicut of weight at least $g$. 
	Note that every weighted \AZ{acyclic} digraph $H$ can be transformed into a weighted \AZ{acyclic digraph $D_H\in {\cal D}_{\omega}(w')$} for some $w'\ge w(H)$ such that a maximum weight dicut in $H$ with minimal number of arcs is a maximum dicut in $D_H$ and a maximum weight dicut in $D_H$ is a maximum weight dicut in $H$. Indeed, this holds if $D_H$ is obtained from $H$ by deleting all arcs of weight zero and dividing all arc-weights by the minimum (positive) arc-weight, if the minimum arc-weight is smaller than 1. 
	
	Clearly, $h_{\omega}(m)\le h_{\mu}(m)\le h(m)$ for each integer $m\ge 1$. For any $w\geq 1$, we will show that $h_{\omega}(w)=w/4+O(w^{3/4})$ by proving $h_{\mu}(m)=m/4+O(m^{3/4})$ and using monotonicity of $h_\omega (w)$. We will also prove that $h_{\omega}(w)=w/4+\Omega(w^{3/5})$. \AZ{Note that this lower bound does not hold for weighted digraphs $D$ with the total weight $w=w(D)<1.$ Indeed, if $w<1$ and $0<\alpha<1$, then $w^\alpha>w$.} Note that even though our upper bound for $h_{\mu}(m)$ is better than that of $h(m)$, they are not comparable. In contrast, our lower bound extends that of \cite{Alon2006} from acyclic digraphs to weighted acyclic digraphs. \AZ{Our proof is different from that in \cite{Alon2006} as we could not extend the proof of \cite{Alon2006} to weighted digraphs.}
	The asymptotic value of functions $h$, $h_{\mu}$ and $h_{\omega}$ remains an interesting open problem.

	In Section \ref{sec: acyclic digraphs with bounded path length}, we study the best possible lower bound for weighted acyclic digraphs with bounded path length. Namely, for every integer $\nu\geq 2$, we study the largest possible coefficient $c_\nu>0$ such that for every weighted acyclic digraph $D$ in which the longest path has $\nu$ vertices, we have ${\rm mac}(D)\ge c_\nu w(D)$. The exact value of $c_\nu$ is determined when $\nu\leq 11$ (see Appendix). However the main goal of this section is to give general bounds for $c_\nu$, and we will show that $c_\nu=1/4+O(\nu^{-1/2})$ and $c_\nu=1/4+\Omega(\nu^{-2/3})$. Determining an asymptotic value of $c_{\nu}$ remains an open problem. 
	
	In Section~\ref{Generalization of Theorem}, we obtain the main result of this paper:
the lower bound in Section~\ref{sec: bounds for acyclic directed multigraphs} for \AZ{acyclic} $D\in {\cal D}_{\omega}(w)$, i.e., $h_\omega(w)=w/4+\Omega(w^{3/5})$ still holds asymptotically when the maximum length of a cycle \AZ{is bounded by a constant. In fact, we prove a stronger result. Namely, we show that for any $0<\alpha<1$, if the lower bound $\mathrm{mac}(D)= w(D)/4+\Omega(w(D)^\alpha)$ holds for all weighted acyclic digraphs $D\in {\cal D}_{\omega}(w)$, then the same bound holds asymptotically even if we allow any circle of length at most a constant. In our proof, we use a theorem of Bondy \cite{Bondy} which states that the chromatic number of the underlying graph of a digraph $D$ is at most the length of a longest cycle in $D$.}
	
\paragraph{Additional Terminology and Notation}	
A vertex $v$ of a digraph $D$ is a {\em source} ({\em sink}, respectively) if $d^-(v)=0$ ($d^+(v)=0$, respectively). All paths and cycles in digraphs are directed. {The {\em length} of a cycle or path is the number of its arcs. A $k$-{\em path} ($k$-{\em cycle}, respectively) is a path (a cycle, respectively) of length $k$.}
The {\em underlying graph} of a weighted digraph $D=(V(D),A(D),w_D)$ is a weighted graph $UG(D)$ with the same vertex set as $D$, where two vertices $x$ and $y$ of $G$ are adjacent  if there is an arc between $x$ and $y$ ($UG(D)$  has no multiple arcs)
	and if $xy\in A(D),$ but $yx\not\in A(D)$ then $w_G(xy)=w_D(xy)$ and if $xy\in A(D)$ and $yx\in A(D)$ then $w_G(xy)=w_D(xy)+w_D(yx).$ A digraph $D$ is {\em connected} if $UG(D)$ is connected. A subgraph $H$ of a digraph $D$ is a {\em connected component} of $D$, if $UG(H)$ is a connected component of $UG(D).$ The {\em order} of a directed or undirected graph is the number of its vertices. Terminology and notation on digraphs not defined in this paper can be found in \cite{BJG09,BJG18}.
	
	%In Section \ref{sec: reductions}, we show that the problem of finding maximum weighted dicuts is equivalent to the problem of finding maximum weighted cuts in undirected graphs in the sense of polynomial reductions.
	
	\section{Bounds for Arbitrary Weighted Digraphs}\label{sec: general bounds}
	
	This section is partitioned into two subsections. In the first subsection, we prove a number of basic lower and upper bounds for  ${\rm mac}(D)$ and in the second subsection, we study  a lower bound for  ${\rm mac}(D)$ using parameter $\theta(D)$ introduced in Section \ref{sec:intro}.

	\subsection{Basic Bounds}

	\begin{theorem}\label{thm:basic}
		Let $D=(V(D),A(D),w_D)$ be a weighted digraph and let $G=UG(D).$
		\begin{description}
			\item[(a)] If the minimum weight of a dicut of $D$ is $k$, then $r^+(D)+k \leq {\rm mac}(D)$.
			\item[(b)] $\frac{{\rm mac}(G)}{2} \leq {\rm mac}(D) \leq \frac{{\rm mac}(G) + r^+(D)}{2}$.
			\item[(c)] $\frac{{\rm mac}(G)/2 +r^+(D)}{2} \leq {\rm mac}(D) \leq \frac{{\rm mac}(G) + r^+(D)}{2}$.
			\item[(d)] Let the chromatic number of $G$ be $\chi $. If $\chi$ is even, 	then 
			\[ \left(\frac{1}{4}+\frac{1}{4 (\chi-1)}\right)w(D) \leq  {\rm mac}(D);\]  and if $\chi$ is odd, then 
			\[ \left(\frac{1}{4}+\frac{1}{4 \chi}\right)w(D) \leq  {\rm mac}(D).\]
		\end{description}
	\end{theorem}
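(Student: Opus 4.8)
The plan is to derive all four parts from two elementary identities relating dicuts of $D$ to cuts of $G=UG(D)$. For any bipartition $(X,Y)$ of $V(D)$, summing the relation $w_G(xy)=w_D(xy)+w_D(yx)$ over the edges of $G$ that cross the cut yields
\[ w_D(X,Y)+w_D(Y,X)=w_G(X,Y), \]
while \eqref{eq:plus} already records $w_D(X,Y)-w_D(Y,X)=\sum_{x\in X}r(x)\le r^+(D)$. Summing $w_G(xy)=w_D(xy)+w_D(yx)$ over \emph{all} edges of $G$ (each arc of $D$ contributing to exactly one term) also gives $w(G)=w(D)$, which I will use in (d).

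For part \textbf{(b)} I would take a maximum cut $(X,Y)$ of $G$: the first identity gives $w_D(X,Y)+w_D(Y,X)={\rm mac}(G)$, so one of the dicuts $(X,Y)$, $(Y,X)$ of $D$ has weight at least ${\rm mac}(G)/2$; for the upper bound I apply both identities to a maximum dicut $(X,Y)$ of $D$ and add them, obtaining $2\,{\rm mac}(D)\le{\rm mac}(G)+r^+(D)$. Part \textbf{(c)} then requires no new idea: its upper bound is that of (b), and its lower bound is the mean of the two valid lower bounds ${\rm mac}(D)\ge{\rm mac}(G)/2$ (from (b)) and ${\rm mac}(D)\ge r^+(D)$ (from \eqref{eq:easybound}). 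For part \textbf{(a)} I take $X^*=\{v: r(v)>0\}$ and $Y^*=V(D)\setminus X^*$; then $w_D(X^*,Y^*)-w_D(Y^*,X^*)=r^+(D)$ by \eqref{eq:plus}, and $(Y^*,X^*)$ is a dicut, so $w_D(Y^*,X^*)\ge k$, whence ${\rm mac}(D)\ge w_D(X^*,Y^*)=r^+(D)+w_D(Y^*,X^*)\ge r^+(D)+k$ (if $X^*$ or $Y^*$ is empty the claim collapses to \eqref{eq:easybound}).

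For part \textbf{(d)} I would first prove the purely undirected fact that a weighted graph admitting a proper colouring into $\chi$ classes satisfies ${\rm mac}(G)\ge\frac{\chi}{2(\chi-1)}w(G)$ when $\chi$ is even and ${\rm mac}(G)\ge\frac{\chi+1}{2\chi}w(G)$ when $\chi$ is odd. Fixing a proper $\chi$-colouring $V_1,\dots,V_\chi$, assign the $\chi$ colour classes uniformly at random to two sides whose sizes are as equal as possible ($\chi/2,\chi/2$, or $(\chi+1)/2,(\chi-1)/2$); since every edge joins two distinct classes, a short binomial computation shows it is cut with probability exactly $\frac{\chi}{2(\chi-1)}$ (even case) or $\frac{\chi+1}{2\chi}$ (odd case), so averaging yields a cut of at least that fraction of $w(G)$. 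Combining this with ${\rm mac}(D)\ge{\rm mac}(G)/2$ from (b), using $w(G)=w(D)$, and rewriting $\frac{\chi}{2(\chi-1)}=\frac12+\frac1{2(\chi-1)}$ and $\frac{\chi+1}{2\chi}=\frac12+\frac1{2\chi}$ produces exactly the two displayed inequalities.

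The only step involving any arithmetic is the cut-probability computation in (d), which reduces to a one-line manipulation of binomial coefficients (for instance, in the even case $2\binom{\chi-2}{\chi/2-1}\big/\binom{\chi}{\chi/2}=\frac{\chi}{2(\chi-1)}$); everything else is bookkeeping with the two identities above, so I expect no genuine obstacle.
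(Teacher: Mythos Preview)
Your proposal is correct and follows essentially the same route as the paper's proof: parts (a)--(c) are handled identically, and for (d) both arguments randomly assign the $\chi$ colour classes to two balanced sides and compute the same binomial probability. The only cosmetic difference is that the paper packages (d) via an auxiliary weighted complete graph on the colour classes and passes directly to the dicut, whereas you first bound $\mathrm{mac}(G)$ and then invoke (b); the arithmetic and the resulting bounds coincide.
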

	\begin{proof}
		For (a), let $X$ contain all vertices, $x$, with $r(x) \geq 0$. Then by \eqref{eq:plus}
		\[ r^+(D) = w_D(X,Y) - w_D(Y,X) \leq w_D(X,Y) - k \leq {\rm mac}(D) - k.\]
		For (b),  we have by definition that for any cut $(X,Y)$ 
		\[
		w_G(X,Y) = w_D(X,Y) + w_D(Y,X).
		\]
		In particular this equality is true for the maximum cut in $G$ and hence in this case $w_D(X,Y)$ or  $w_D(Y,X)$ has to be at least ${\rm mac}(G)/2$ which shows the lower bound.
		For the upper bound, adding this equality to  \eqref{eq:plus}, we get  
		\[ w_G(X,Y)+\sum_{x \in X} r(x) = 2w_D(X,Y).\]
		As $\sum_{x \in X} r(x) \leq r^+(D)$ and $w_G(X,Y) \leq {\rm mac}(G)$, this
		implies that 
		\[ 2w_D(X,Y) \leq {\rm mac}(G) + r^+(D)\]
		for any cut and in particular when $(X,Y)$ is a cut of $D$ of the maximum weight. 
		
		The lower bound of (c) follows by adding \eqref{eq:easybound} and (b) together and dividing by two, while the upper bound is the same as in (b).
		
		For (d) consider a partition of $V(D)$ into $\chi$ independent sets $V_1,\ldots,V_\chi$. We construct an auxilary weighted undirected complete graph  $G'$ with $\chi$ vertices $v_1,\ldots,v_\chi$ and for $i<j$ the edge $\{v_i,v_j\}$ has weight $w'(v_iv_j)= w(V_i,V_j) + w(V_j,V_i)$. Note that $w'(G')=w(D)$ and that every cut  $(X',Y')$ in $G'$ corresponds to two cuts $(X,Y)$  and $(Y,X)$  in $D$ such that $w'(X',Y')=w(X,Y) +w(Y,X)$. If $\chi =2t$ is even then we partition $V(G')$ randomly and uniformly into two parts $X'$ and $Y'$ of size $t$.
		For an edge $e\in E(G')$, let the $I(e) = w'(e)$ if one endpoint of $e$ is in $X'$ and the other in $Y'$, and $I(e)=0$ otherwise. Then 
		\begin{align*} \mathbb{E}(w'(X',Y')) &=\mathbb{E} \left( \sum_{e\in E(G')} I(e)	  \right) = 	 \sum_{e\in E(G')} \mathbb{E}( I(e))\\
			& = \sum_{e\in E(G')} w'(e) \frac{ 2 {{2t-2}\choose {t-1}} }{ { {2t}\choose t} } = \frac{t}{2t -1}  w'(G') = \left( \frac{1}{2} + \frac{1}{ 2\chi -2} \right) w(D).
		\end{align*}
		If $\chi =2t+1$ is odd then we partition $V(G')$ randomly and uniformly into two parts $X'$ and $Y'$ of size $t+1$ and $t$ respectively.
		For an edge $e\in E(G')$, let  $I(e) = w'(e)$ if one endpoint of $e$ is in $X'$ and the other in $Y'$, and $I(e)=0$ otherwise. Then 
		\[  \mathbb{E}(w'(X',Y'))=  \sum_{e\in E(G')} w'(e) \frac{2  {{2t-1}\choose {t-1}} }{ { {2t+1}\choose t} } =  \frac{t+1}{2t +1}w'(G') = \left(\frac{1}{2}+ \frac{1}{2\chi}   \right)  w(D).\]
		In either case there exists a cut $(X'',Y'')$ in $G'$ with $w'(X'',Y'')\geq \mathbb{E}(w(X',Y'))$, and the result now follows since there exist corresponding cuts $(X,Y)$ and $(Y,X)$ such that $w'(X'',Y'')=w(X,Y) +w(Y,X)$. 	 
	\end{proof}

		Part (b) of Theorem \ref{thm:basic} shows that if $r^+(D)=0$, then ${\rm mac}(D) = \frac{{\rm mac}(G)}{2}$. Let $D$ be any strong digraph (without arc weights) and let $G$ be the underlying graph of $D$. We can assign arc weights to $D$ such that $r^+(D)=0$ as follows. Initially let all weights be zero. For every arc, $uv \in A(D)$ let $C_{uv}$ be a cycle containing $uv$ and add one to the weight of all arcs in $C_{uv}$. After doing this for all arcs $uv$ we note that all weights are positive and $r^+(D)=0$. So for digraph $D$ with these weights, we have ${\rm mac}(D) = \frac{{\rm mac}(G)}{2}$. Note that ${\rm mac}(D) \geq \frac{{\rm mac}(G)}{2}$ (by (b)). Thus, the best bounds for general digraphs or general strong digraphs we can get (if we do not restrict the arc weights in any way) are exactly the same bounds we get for the underlying graph of $D$ divided by 2.
	
	Part (d) of Theorem~\ref{thm:basic} immediately allows us to obtain lower bounds on some well studied graph classes. 
	For example, for any integer $k>0$, a graph $G$ is said to be {\em $k$-degenerate} if the minimum degree of any induced subgraph of it is at most $k$. It is well known that if a graph is $k$-degenerate, then its chromatic number is at most $k+1$. If $D=(V,A,w)$ is a weighted digraph with maximum out-degree $\Delta^+(D) \leq k$ or maximum in-degree $\Delta^-(D) \leq k$, then the underlying graph of $D$ is $2k$-degenerate. Indeed, assume $\Delta^+(D) \leq k$ and let $G$  be the underlying  graph of $D$. Since $\Delta^+(D) \leq k$, for any induced subdigraph $D'$ of $D$, there always exists a vertex $v\in V(D')$ with in-degree at most $k$, which implies $d_{G[V(D')]}(v)=d_{D'}^+(v)+d_{D'}^-(v)\leq 2k$. Hence, $G$ is $2k$-degenerate.
	The same argument works with $\Delta^+$ and $\Delta^-$ exchanged. Thus we have the following proposition. 
	
	\begin{proposition} \label{maxDegD}
		Let $D=(V,A,w)$ be a weighted digraph with $\Delta^+(D) \leq k$ or $\Delta^-(D) \leq k$.
		Then ${\rm mac}(D) \geq (\frac{1}{4}+\frac{1}{8k+4})w(D)$.
	\end{proposition}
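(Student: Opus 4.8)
The plan is to read this off directly from part~(d) of Theorem~\ref{thm:basic} together with the degeneracy observation made just above the statement. First I would recall that, under the hypothesis $\Delta^+(D)\le k$ or $\Delta^-(D)\le k$, the underlying graph $G=UG(D)$ is $2k$-degenerate: in any induced subdigraph $D'$ there is a vertex $v$ whose out-degree (respectively in-degree) in $D'$ is at most $k$, hence $d_{G[V(D')]}(v)=d^+_{D'}(v)+d^-_{D'}(v)\le 2k$. Since a $2k$-degenerate graph has chromatic number at most $2k+1$, we get $\chi:=\chi(G)\le 2k+1$.

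Next I would plug $\chi$ into Theorem~\ref{thm:basic}(d) and check that in either parity the coefficient it produces is at least $\tfrac14+\tfrac1{8k+4}$. If $\chi$ is odd, then $\chi\le 2k+1$ gives $\tfrac14+\tfrac1{4\chi}\ge \tfrac14+\tfrac1{4(2k+1)}=\tfrac14+\tfrac1{8k+4}$. If $\chi$ is even, then because $2k+1$ is odd we in fact have $\chi\le 2k$, so $\tfrac14+\tfrac1{4(\chi-1)}\ge \tfrac14+\tfrac1{4(2k-1)}>\tfrac14+\tfrac1{8k+4}$. (The degenerate subcase $\chi\le 1$, i.e.\ $D$ has no arcs, forces $w(D)=0$ and the inequality is trivial.) In all cases Theorem~\ref{thm:basic}(d) then yields $\mathrm{mac}(D)\ge\bigl(\tfrac14+\tfrac1{8k+4}\bigr)w(D)$, as required.

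There is essentially no obstacle: the proposition is an immediate corollary of Theorem~\ref{thm:basic}(d), and the only point requiring a moment's care is the even/odd case split, where one uses that the largest chromatic number compatible with $2k$-degeneracy, namely $2k+1$, is odd, so the even-$\chi$ branch of~(d) is never the binding case.
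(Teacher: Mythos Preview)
Your proposal is correct and follows exactly the approach the paper takes: the paragraph preceding Proposition~\ref{maxDegD} establishes that $UG(D)$ is $2k$-degenerate (hence $\chi\le 2k+1$), and the proposition is then stated as an immediate consequence of Theorem~\ref{thm:basic}(d). You add the explicit even/odd case split on $\chi$, which the paper leaves implicit, but this is routine and your handling of it is fine.
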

		Let $T_k$ be the $k$-regular tournament and $K_{2k+1}$ be the complete graph of order $2k+1$, where the weight of every arc of these two graphs is one. Note that $r^+(T_k)=0$ and therefore ${\rm mac}(T_k) = \frac{{\rm mac}(K_{2k+1})}{2}$. Then we have
	\[
	\mathrm{mac}(T_k) =\frac{\mathrm{mac}(K_{2k+1})}{2}= \frac{k(k+1)}{2} = k(2k+1) \frac{k+1}{4k+2} = \frac{k+1}{4k+2}w(T_k).
	\]
	
	So, Proposition~\ref{maxDegD} is tight.

	%\section{Digraphs with Bounded Path Length}\label{sec: bounded path length}
	
	It is not hard to give a tight lower bound for the maximum dicut of digraphs, in which the order of the maximum path is $\nu$, by using the following Gallai-Hasse-Roy-Vitaver Theorem \cite{Gallai1968,Hasse,Roy1967,Vitaver}. 
	\begin{theorem}\cite{Gallai1968,Hasse,Roy1967,Vitaver}\label{Galli-Roy}
		Every digraph $D$ contains a directed path with $\chi(D)$ vertices.
	\end{theorem}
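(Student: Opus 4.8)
The plan is to prove the equivalent inequality $\chi(D)\le k$, where $k$ denotes the number of vertices on a longest directed path of $D$ and $\chi(D)$ is understood as $\chi(UG(D))$; once this is established, any longest directed path (truncated if necessary) yields a directed path on exactly $\chi(D)$ vertices. The whole argument reduces to exhibiting an explicit proper colouring of $UG(D)$ that uses at most $k$ colours.

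First I would pass to a spanning subdigraph $D'$ of $D$ that is acyclic and has the maximum possible number of arcs (equivalently, delete a minimum arc set meeting every directed cycle). For each $v\in V(D)$, set $c(v)$ to be the maximum number of vertices on a directed path of $D'$ ending at $v$. Since the trivial one-vertex path is allowed and $D'$ is acyclic, $c(v)$ is a well-defined integer with $1\le c(v)\le k'$, where $k'$ is the number of vertices on a longest directed path of $D'$; and since $D'\subseteq D$, we have $k'\le k$.

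Next I would check that $c$ is a proper colouring of $UG(D)$ via two monotonicity facts. If $uv\in A(D')$, then appending $uv$ to a longest path of $D'$ ending at $u$ shows $c(v)\ge c(u)+1$, so $c(u)<c(v)$. If instead $uv\in A(D)\setminus A(D')$, then by maximality of $D'$ the digraph $D'+uv$ contains a directed cycle, hence $D'$ contains a directed path from $v$ to $u$, and applying the first fact along its arcs gives $c(v)<c(u)$. For any two vertices adjacent in $UG(D)$ at least one of $uv,vu$ lies in $A(D)$, and since at most one of them can lie in the acyclic $D'$, the two cases always force $c(u)\ne c(v)$ with no conflict. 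Thus $c$ is a proper colouring using colours from $\{1,\dots,k'\}$, so $\chi(D)\le k'\le k$, as required.

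The one point that needs care is the treatment of arcs of $D$ not retained in $D'$: here one must invoke the maximality of $D'$ to produce the ``return path'' from $v$ to $u$, and one must confirm that digons cause no trouble (which they do not, for the reason just noted). For digon-free digraphs one could skip the passage to $D'$ and colour directly by longest-path length, but for general digraphs the maximal acyclic subdigraph is exactly what makes the colouring proper; an alternative inductive argument on $|A(D)|$ or $|V(D)|$ is possible but seems less clean than this direct construction.
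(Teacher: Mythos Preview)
Your proof is correct and is essentially the classical argument for the Gallai--Hasse--Roy--Vitaver theorem. The paper itself does not supply a proof of this statement; it is simply quoted as a known result with references and then applied. So there is no ``paper's proof'' to compare against, but what you have written is the standard textbook proof.

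One small correction to your closing remark: it is \emph{not} true that for digon-free digraphs one can skip the passage to a maximal acyclic subdigraph and colour directly by longest-path length in $D$ itself. A directed triangle is digon-free, yet every vertex is the terminus of a $3$-vertex path, so that colouring would assign the same colour to all three vertices. The passage to a maximal acyclic $D'$ is needed precisely so that (i) along every arc of $D'$ the colour strictly increases, which requires acyclicity, and (ii) every arc of $D$ outside $D'$ is witnessed by a reverse path in $D'$, which requires maximality. This side remark does not affect the correctness of your main argument.
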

	\begin{theorem}\label{cut(r)}
		If the number of vertices $\nu$ in the longest path in $D$ is odd \textup{(even, respectively)}, then $D$ has a dicut with weights at least $(\frac{1}{4}+\frac{1}{4\nu})w(D)$ \textup{($(\frac{1}{4}+\frac{1}{4(\nu-1)})w(D)$, respectively)}.
	\end{theorem}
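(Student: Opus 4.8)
The plan is to reduce Theorem~\ref{cut(r)} to part (d) of Theorem~\ref{thm:basic} via the Gallai--Hasse--Roy--Vitaver Theorem (Theorem~\ref{Galli-Roy}). First I would recall that Theorem~\ref{thm:basic}(d) gives, for a weighted digraph $D$ whose underlying graph $G=UG(D)$ has chromatic number $\chi$, the bound $\mathrm{mac}(D)\ge\left(\frac14+\frac1{4(\chi-1)}\right)w(D)$ when $\chi$ is even and $\mathrm{mac}(D)\ge\left(\frac14+\frac1{4\chi}\right)w(D)$ when $\chi$ is odd. So it suffices to control $\chi=\chi(G)$ in terms of $\nu$, the number of vertices in a longest directed path of $D$.

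The key observation is that $\chi(D):=\chi(UG(D))\le\nu$. Indeed, by Theorem~\ref{Galli-Roy} every digraph contains a directed path on $\chi(UG(D))$ vertices, hence $\chi(UG(D))\le\nu$ since $\nu$ is the maximum number of vertices over all directed paths in $D$. Now I would split into the two parity cases stated in the theorem. If $\nu$ is odd, then $\chi\le\nu$; the function $\chi\mapsto\frac14+\frac1{4\chi}$ (for $\chi$ odd) and $\chi\mapsto\frac14+\frac1{4(\chi-1)}$ (for $\chi$ even) are each decreasing in $\chi$, and one checks that in both parities the value at a given $\chi\le\nu$ is at least $\frac14+\frac1{4\nu}$; feeding this into Theorem~\ref{thm:basic}(d) yields $\mathrm{mac}(D)\ge\left(\frac14+\frac1{4\nu}\right)w(D)$. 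If $\nu$ is even, then $\chi\le\nu$ and the same monotonicity gives a bound of at least $\frac14+\frac1{4(\nu-1)}$ regardless of the parity of $\chi$, so $\mathrm{mac}(D)\ge\left(\frac14+\frac1{4(\nu-1)}\right)w(D)$.

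The only slightly delicate point — and the place I would be most careful — is the elementary verification that, across both parity cases for $\chi$, the right-hand side of Theorem~\ref{thm:basic}(d) is at least the claimed bound in $\nu$. Concretely: when $\nu$ is odd and $\chi\le\nu$ with $\chi$ even, we need $\frac14+\frac1{4(\chi-1)}\ge\frac14+\frac1{4\nu}$, i.e.\ $\chi-1\le\nu$, which holds; when $\chi$ is odd, $\frac14+\frac1{4\chi}\ge\frac14+\frac1{4\nu}$, i.e.\ $\chi\le\nu$, which holds. When $\nu$ is even, the analogous inequalities are $\chi-1\le\nu-1$ (for $\chi$ even) and $\chi\le\nu-1$ (for $\chi$ odd); the first is immediate, and for the second note that if $\chi$ is odd and $\chi\le\nu$ with $\nu$ even then in fact $\chi\le\nu-1$. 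This completes the argument; it is entirely routine once the reduction to Theorem~\ref{thm:basic}(d) and the bound $\chi(UG(D))\le\nu$ are in place.
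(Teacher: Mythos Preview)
Your proposal is correct and follows exactly the route the paper intends: the paper does not spell out a proof of Theorem~\ref{cut(r)} but simply remarks that it follows from the Gallai--Hasse--Roy--Vitaver Theorem (giving $\chi(UG(D))\le\nu$) combined with Theorem~\ref{thm:basic}(d), and your write-up fills in precisely those details, including the small parity check needed at the end.
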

	Again, it is easy to check this bound is tight for regular tournament.
	
		Let $\mathcal{B}(D)$ denote the set of bipartite subdigraphs $R$ of $D$ such that for every connected component $R_1$ of $R$ with bipartition $(X_1,X_2)$, both $X_1$ and $Y_1$ induce independent sets in $D$.
		
		\begin{lemma}\label{lem: induced bipartite digraph}
			Let $D=(V,A,w)$ be a weighted digraph. If $R\in \mathcal{B}(D)$, then $\mathrm{mac}(D)\geq \frac{w(D)}{4}+\frac{w(R)}{4}$.
		\end{lemma}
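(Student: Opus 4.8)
The plan is to exhibit the desired dicut through a single randomised partition of $V(D)$ coupled along the components of $R$, and then take a deterministic partition achieving at least the expectation. Let $R_1,\dots,R_t$ be the connected components of $R$, and for each $i$ let $(X_i,Y_i)$ be the bipartition of $R_i$ (well defined since $R_i$ is connected and bipartite). I would build a random partition $(P,Q)$ of $V(D)$ as follows: for each $i$ flip an independent fair coin and, on heads, put $X_i\subseteq P$ and $Y_i\subseteq Q$, while on tails put $X_i\subseteq Q$ and $Y_i\subseteq P$; then assign each vertex of $V(D)\setminus V(R)$ to $P$ or $Q$ independently and uniformly at random. Observe that under this process every individual vertex lies in $P$ with probability exactly $1/2$.

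Next I would estimate $\Pr[u\in P,\ v\in Q]$ for an arbitrary arc $uv\in A(D)$. If $u$ and $v$ lie in a common component $R_i$ of $R$, then they cannot lie in the same part of $R_i$: since $R\in\mathcal{B}(D)$, both $X_i$ and $Y_i$ induce independent sets in $D$, so an arc of $D$ with both ends in $X_i$, or both in $Y_i$, is impossible. Hence $\{u,v\}$ meets both $X_i$ and $Y_i$, and the single coin for $R_i$ places $u$ in $P$ and $v$ in $Q$ with probability exactly $1/2$. Otherwise $u$ and $v$ have their sides decided by independent fair coins (this covers the case that at least one of them lies outside $V(R)$, as well as the case of two distinct $R$-components), so $\Pr[u\in P,\ v\in Q]=1/4$. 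In either case the probability is at least $1/4$, and it equals $1/2$ whenever $uv\in A(R)$, because every arc of $R$ joins the two parts of its component.

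Finally, by linearity of expectation,
\[
\mathbb{E}\big[w(P,Q)\big]=\sum_{uv\in A(D)} w_D(uv)\,\Pr[u\in P,\ v\in Q]\ \ge\ \tfrac12\,w(R)+\tfrac14\big(w(D)-w(R)\big)=\tfrac14\,w(D)+\tfrac14\,w(R),
\]
so some partition $(P,Q)$ yields a dicut of $D$ of weight at least $\tfrac14 w(D)+\tfrac14 w(R)$, which gives $\mathrm{mac}(D)\ge \tfrac14 w(D)+\tfrac14 w(R)$.

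The only step requiring any care — and the single place the hypothesis $R\in\mathcal{B}(D)$ enters — is the observation that two vertices joined by an arc of $D$ and lying in the same component of $R$ must lie in opposite parts of that component's bipartition; without each part being independent in $D$, such an arc would always sit on one side of the coupled cut (probability $0$ of crossing from $P$ to $Q$) and the bound would fail. The rest is a routine expectation computation, so I expect no real obstacle.
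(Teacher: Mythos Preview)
Your proof is correct and follows essentially the same approach as the paper's: the identical coupled random partition (flip one fair coin per component of $R$, independent fair coins for the remaining vertices), followed by the same expectation computation. The one difference is expository: you spell out explicitly why an arc of $D$ with both endpoints in a single component $R_i$ must straddle the bipartition of $R_i$ (using that $X_i,Y_i$ are independent in $D$), whereas the paper simply asserts that every arc outside $A(R)$ crosses with probability at least $1/4$ without isolating this case.
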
 
		\begin{proof}
			
			Let $R_1$, $\dots$, $R_t$ be the connected components of $R$ and $(X_i,Y_i)$ be the bipartition of $R_i$, $i\in [t]$. We will create a random partition $(X,Y)$ of $V(D)$ as follows. For each $i\in [t]$, we let $X_i$ belong to $X$ and $Y_i$ belong to $Y$ with probability $1/2$ and we let $X_i$ belong to $Y$ and $Y_i$ belong to $X$ with probability $1/2$. Thus, $X_i$ and $Y_i$ will never belong to the same set in the partition $(X,Y)$. For any vertex that is not in $V(R)$ we assign it to $X$ with probability $1/2$ and to $Y$ with probability $1/2$. 
			
			We note that every arc in $A(R)$ belongs to the dicut $(X,Y)$ with probability $1/2$ and every other arc in $D$ belongs to the dicut $(X,Y)$ with probability at least $1/4$. So, the average weight of the partition $(X,Y)$ is at least $\frac{w(R)}{2}+ \frac{w(D)-w(R)}{4}=\frac{w(D)+w(R)}{4}$, which proves the theorem.
		\end{proof}
		
		Since any matching in $D$ is clearly in $\mathcal{B}(D)$, the following result holds.
		
		\begin{corollary} \label{lowerBoundMatching}
			Let $D=(V,A,w)$ be a weighted digraph and let $M$ be a matching in $D$. Then ${\rm mac}(D) \geq \frac{w(D)}{4} + \frac{w(M)}{4}$.
		\end{corollary}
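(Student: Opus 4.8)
The plan is to derive this immediately from Lemma~\ref{lem: induced bipartite digraph} by taking $R = M$. So the only thing that needs checking is that a matching $M$, regarded as an arc-subdigraph of $D$, belongs to the family $\mathcal{B}(D)$.

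To see this, recall that a matching is a set of arcs no two of which share an endpoint; hence each connected component $R_1$ of the subdigraph $M$ consists of a single arc $uv$, and the natural bipartition of $R_1$ is $(\{u\},\{v\})$. Each of the two parts is a singleton, and a single vertex trivially induces an independent set in $D$. Therefore $M \in \mathcal{B}(D)$, and applying Lemma~\ref{lem: induced bipartite digraph} with $R=M$ yields $\mathrm{mac}(D) \geq \frac{w(D)}{4} + \frac{w(M)}{4}$, as claimed.

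There is essentially no obstacle here: the content lies entirely in Lemma~\ref{lem: induced bipartite digraph}, and the corollary is just the observation that matchings are the simplest members of $\mathcal{B}(D)$. (If one preferred a self-contained argument, one could rerun the same random-partition proof directly: independently flip a fair coin for each matched pair $\{u,v\}$ so that $u$ and $v$ always land on opposite sides of the cut, and assign every other vertex independently and uniformly; then each arc of $M$ is cut with probability exactly $1/2$ and every other arc with probability at least $1/4$, giving an expected dicut weight of at least $\frac{w(M)}{2} + \frac{w(D)-w(M)}{4} = \frac{w(D)}{4} + \frac{w(M)}{4}$, and some cut attains at least the expectation.)
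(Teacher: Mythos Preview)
Your proof is correct and follows exactly the paper's approach: the paper simply remarks that any matching in $D$ is clearly in $\mathcal{B}(D)$ and then invokes Lemma~\ref{lem: induced bipartite digraph}. Your additional parenthetical giving the direct random-partition argument is a faithful unpacking of that lemma's proof in this special case.
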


	\subsection{Bounds for Given $\theta(D)$}

	Recall that $\theta(D)=r^+(D)/w(D)$. We will give bounds on ${\rm mac}(D)$ in terms of $\theta(D)$ and $w(D)$. Recall that \[
		l(\theta)
		= \left\{ \begin{array}{lll} \vspace{0.1cm}
			\left( \frac{1}{4} + \frac{\theta^2}{4(1-2\theta)} \right) & & \mbox{if $\theta < 1/3$;}  \\
			\theta & & \mbox{if $\theta \geq 1/3.$} \\
		\end{array} \right.
		\]
	Let us discuss a motivation for studying lower bounds on  ${\rm mac}(D)$ in terms of $\theta(D)$. If $r^+(D)/w(D)=0$ then the problem is equivalent to the {\sc Maximum Weighted  Cut} problem for the underlying graph (see the remark after the proof of Theorem \ref{thm:basic}).
	Thus, the best possible bound that can be obtained is ${\rm mac}(D) \geq w(D)/4$.
	If $r^+(D)/w(D)=1$ then the weighted maximum cut  of $D$ contains all arcs in $D$, so the problem is easy and ${\rm mac}(D) = w(D)$.
	
	So, it seems natural to find best possible bounds in the case when $0 < r^+(D)/w(D) < 1$. Multiplying all weights in $D$ by a constant $c$ increases
	$r^+(D)$, $w(D)$ and ${\rm mac}(D)$ by a factor of $c$. Thus, the interesting parameter is $r^+(D)/w(D)$ as this does not change if we multiply all weights
	by a given constant. So we want to find the best possible bounds for ${\rm mac}(D)$ of the form 
	${\rm mac}(D) \geq f(\theta(D)) \cdot  w(D)$, for some function $f$.
	\begin{lemma} \label{boundLowerf}
		${\rm mac}(D) \geq l(\theta(D)) \cdot w(D)$.
	\end{lemma}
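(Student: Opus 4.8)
The plan is to split into the two regimes $\theta(D) \geq 1/3$ and $\theta(D) < 1/3$, since $l(\theta)$ is defined piecewise. The case $\theta(D) \geq 1/3$ is immediate: here $l(\theta(D)) \cdot w(D) = \theta(D) \cdot w(D) = r^+(D)$, so the bound ${\rm mac}(D) \geq l(\theta(D)) w(D)$ is exactly the easy bound \eqref{eq:easybound}. So all the work is in the regime $\theta(D) < 1/3$, where we must prove ${\rm mac}(D) \geq \bigl(\tfrac14 + \tfrac{\theta^2}{4(1-2\theta)}\bigr) w(D)$ with $\theta = \theta(D)$.

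For the main case I would use a random partition with an unbalanced bias, tuned to exploit $r^+(D)$. Concretely, assign each vertex $v$ independently to $X$ with some probability $p_v$ and to $Y$ with probability $1-p_v$; the natural choice is to push vertices with large positive $r(v)$ toward $X$ and vertices with large negative $r(v)$ toward $Y$, but to keep the analysis clean I expect it is better to use just two probabilities — put every vertex with $r(v) > 0$ in $X$ with probability $p$ and every other vertex in $X$ with probability $1-p$ (equivalently, in $Y$ with probability $p$), for a parameter $p \in [1/2, 1]$ to be optimized. Then for an arc $uv$ the probability it lies in the dicut $(X,Y)$ is $p_u(1-p_v)$ which is $p^2$, $(1-p)^2$, or $p(1-p)$ depending on the signs of $r(u)$ and $r(v)$; summing over all arcs and using $\sum_{x} r(x) = 0$ together with the definition of $r^+(D)$ should give $\mathbb{E}(w(X,Y)) \geq p(1-p) w(D) + (\text{something proportional to } r^+(D))$. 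Optimizing over $p$ — the optimum will be an interior point when $\theta < 1/3$ and will hit the boundary $p$-value exactly at $\theta = 1/3$, which is why the formula changes there — should produce precisely $\tfrac14 + \tfrac{\theta^2}{4(1-2\theta)}$. Since there is always a partition achieving at least the expectation, this yields the bound.

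The main obstacle will be getting the expectation bookkeeping to come out to exactly the claimed expression rather than something weaker: the crude estimate "every arc is in the dicut with probability $\geq p(1-p)$" only gives the $w(D)/4$ term, so one has to carefully track the arcs incident to the positive-$r$ set and use the identity relating $\sum_{r(x)>0} r(x)$, the weights leaving/entering that set, and $w(D)$ to extract the extra $\tfrac{\theta^2}{4(1-2\theta)}$ gain. I would set $S = \{v : r(v) > 0\}$, write $w(D) = w(S,S) + w(\bar S, \bar S) + w(S,\bar S) + w(\bar S, S)$, note $w(S,\bar S) - w(\bar S, S) = r^+(D)$, and compute $\mathbb{E}(w(X,Y))$ as a function of these quantities and $p$; the worst case over the distribution of weight among these four groups (subject to the constraint $w(S,\bar S)-w(\bar S, S) = \theta w(D)$) should then be minimized, and choosing $p$ to balance it gives the result. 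The case analysis at $\theta = 1/3$ where the optimizing $p$ reaches its extreme value should be checked to confirm continuity with the $\theta \geq 1/3$ branch.
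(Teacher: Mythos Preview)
Your proposal is correct and is essentially the paper's own proof. The paper sets $R^+=\{x:r(x)>0\}$, $R^-=V(D)\setminus R^+$, puts each vertex of $R^+$ into $X$ with probability $\tfrac12+\bar p$ and each vertex of $R^-$ into $X$ with probability $\tfrac12-\bar p$ (your $p=\tfrac12+\bar p$), computes $\mathbb{E}(w(X,Y))$ over the four arc types, uses $w(R^+,R^-)-w(R^-,R^+)=r^+(D)$ together with $w(R^+,R^-)+w(R^-,R^+)\ge r^+(D)$, and plugs in $\bar p=\tfrac{\theta}{2(1-2\theta)}$ to obtain $\tfrac14+\tfrac{\theta^2}{4(1-2\theta)}$; the only cosmetic difference is that you propose to optimise over $p$ whereas the paper inserts the optimiser directly.
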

	
	\begin{proof}
		
		Note that for $\theta(D)\geq 1/3$, the bound is simply \eqref{eq:easybound}. So for the remainder we assume  $\theta < 1/3$.
		% Note that $\theta = \left( \frac{1}{4} + \frac{\theta^2}{4(1-2\theta)} \right)$ implies that $\theta=1/3$. 
		Let $D$ be any weighted digraph and let $\theta=\theta(D)$. 		Let $R^+$contain all vertices, $x$, in $D$ with $r(x)>0$ and let $R^-=V(D)\setminus R^+$. 
		That is, for all $y\in R^-$ we have $r(y)\leq 0$.

		Let $\bar{p }= \frac{\theta}{2(1-2\theta)}$ and place any vertex from $R^+$ into $X$ with probability $(1/2 + \bar{p})$ and any vertex in 
		$R^-$ into $X$ with probability $(1/2 - \bar{p})$. Let $Y=V(D) \setminus X$. For an arc $a=uv$ 
		let  $I(a) = w(a)$ if  $u\in X$ and $v\in Y$, and $I(a)=0$ otherwise. 
		Then 
		\begin{align*}
			\mathbb{E}(w(X,Y)) &=\mathbb{E} \left( \sum_{a\in A(D)} I(a)	  \right) = 	 \sum_{a\in A(D)} \mathbb{E}( I(a))\\
			&= \sum_{u,v\in R^+}\mathbb{E}( I(uv))+  \sum_{u,v\in R^-}\mathbb{E}( I(uv))+ \sum_{u\in R^+ \atop v \in R^-}\mathbb{E}( I(uv)) + \sum_{u\in R^- \atop v\in R^+}\mathbb{E}( I(uv)) \\
			&= \sum_{u,v\in R^+}w(uv)\left(\frac{1}{4}-\bar{p}^2\right)+ \sum_{u,v\in R^-} w(uv)\left(\frac{1}{4}-\bar{p}^2\right)+ \\
			&+  \sum_{u\in R^+ \atop v \in R^-} w(uv) \left(\frac12 +\bar{p}\right)^2 + \sum_{u\in R^- \atop v\in R^+} w(uv) \left(\frac12 - \bar{p}\right)^2\\
			&=  \left( \frac{1}{4}  - \bar{p}^2  \right)  w(D)  + (2\bar{p}^2 + \bar{p}) w(R^+, R^-) +   (2\bar{p}^2 -  \bar{p}) w(R^-, R^+) \\
			&=    \left( \frac{1}{4}  - \bar{p}^2  \right)  w(D) + 2 \bar{p}^2 (w(R^+,R^-)+w(R^-,R^+))+ \\ 
			&+ \bar{p}(w(R^+,R^-) - w(R^-,R^+))	.
		\end{align*}
		As $w(R^+,R^-) - w(R^-,R^+)=r^+(D)$ and $r^+(D)=\theta \cdot w(D)$,

		%		We will now compute the average weight of 
		%		the cut $(X,Y)$. Let $uv$ be an arbitrary arc in $A(D)$, and consider the following cases.
		%		
		%		\begin{description}
			%			\item [Case 1, $u,v \in R^+$:] The arc, $uv$, belongs to the cut $(X,Y)$ with probability $(1/2+p)(1/2-p)= 1/4-p^2$.
			%			\item [Case 2, $u,v \in R^-$:] The arc, $uv$, belongs to the cut $(X,Y)$ with probability $(1/2+p)(1/2-p)= 1/4-p^2$.
			%			\item [Case 3, $u \in R^+$ and $v \in R^-$:] The arc, $uv$, belongs to the cut $(X,Y)$ with probability $(1/2+p)(1/2+p)= 1/4+p+p^2$.
			%			\item [Case 4, $u \in R^-$ and $v \in R^+$:] The arc, $uv$, belongs to the cut $(X,Y)$ with probability $(1/2-p)(1/2-p)= 1/4-p+p^2$.
			%		\end{description}
		%		
		%		So the average weight of the cut $(X,Y)$ is the following, as $w(R^+,R^-) - w(R^-,R^+)=r^+(D)$.
		
		%		\[
		%		\begin{array}{rcl}
			%			\mathbb{E}(w(X,Y)) & = & \frac{w(D)}{4} - p^2 w(D) + 2 p^2 (w(R^+,R^-)+w(R^-,R^+)) \\
			%			& & + p(w(R^+,R^-) - w(R^-,R^+)) \\
			%			& \geq & \frac{w(D)}{4} - p^2 w(D) + 2p^2 r^+(D) + pr^+(D) \\
			%		\end{array}
		%		\]
		%		As $r^+(D)=\theta \cdot w(D)$ this implies the following.
		\[
		\begin{array}{rcl}
			\mathbb{E}(w(X,Y)) & \geq & \frac{w(D)}{4} + \bar{p}^2 \left( - w(D) + 2 \theta w(D) + \frac{\theta w(D)}{\bar{p}} \right) \\
			& = & \frac{w(D)}{4} + \left( \frac{\theta}{2(1-2\theta)} \right)^2  w(D)  \left( - 1 + 2 \theta  + \frac{\theta \cdot 2(1-2\theta)}{\theta} \right) \\
			& = & \frac{w(D)}{4} +  \frac{\theta^2}{4(1-2\theta)^2}  w(D)  \left( - 1 + 2 \theta  + 2(1-2\theta) \right) \\
			& = & \frac{w(D)}{4} +  \frac{\theta^2}{4(1-2\theta)}  w(D).  \\
		\end{array}
		\]
		This implies that there exists a cut of weight at least $ \left( \frac{1}{4} + \frac{\theta^2}{4(1-2\theta)} \right)\cdot w(D)$, as desired.
	\end{proof}
	The following lemma will help us to analyse the cut size of certain weighted digraphs.
	
	\begin{lemma} \label{lem:maxg}
		Let $Q$ and $k$ be positive reals and  let $g(x,y)=Qxy + x(k-x)/2 + y(k-y)/2$.  If $0< Q \leq 1/2$ then the function $g(x,y)$  is maximized over $x,y \in [0,k]$ when  $x=y=\frac{k}{2(1-Q)}$ and 
		$g(\frac{k}{2(1-Q)},\frac{k}{2(1-Q)}) = \frac{k^2}{4(1-Q)}$. If $1/2<Q<1$ then the function $g(x,y)$ is maximized  over $x,y \in [0,k]$ when  $x=y=k$  and $g(k,k) = Qk^2$.
		
	\end{lemma}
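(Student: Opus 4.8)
The plan is to treat $g$ as a concave quadratic and collapse the two-variable optimization to a one-variable one using the symmetry between $x$ and $y$. First I would change variables to $s=x+y$ and $d=x-y$. Using $xy=(s^{2}-d^{2})/4$ and $x^{2}+y^{2}=(s^{2}+d^{2})/2$, the function rewrites as
\[
g(x,y)=\frac{Q-1}{4}\,s^{2}+\frac{k}{2}\,s-\frac{Q+1}{4}\,d^{2}.
\]
Since $0<Q<1$, the coefficient $-\tfrac{Q+1}{4}$ of $d^{2}$ is negative, so for every fixed value of $s$ the right-hand side is maximized at $d=0$, i.e.\ at $x=y=s/2$; and if $x,y\in[0,k]$ then $s/2\in[0,k]$, so the point $(s/2,s/2)$ is feasible. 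Hence
\[
\max_{x,y\in[0,k]}g(x,y)=\max_{x\in[0,k]}g(x,x),
\]
and it remains to maximize the single-variable function $\varphi(x):=g(x,x)=Qx^{2}+x(k-x)=kx-(1-Q)x^{2}$ over $x\in[0,k]$.

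Next I would analyze $\varphi$, which is a downward parabola (leading coefficient $-(1-Q)<0$) with unconstrained maximizer $x^{\star}=\frac{k}{2(1-Q)}$. The decisive observation is that $x^{\star}\le k$ exactly when $Q\le 1/2$. In the regime $0<Q\le 1/2$ we then have $0<x^{\star}\le k$, so $x^{\star}$ is feasible and a direct substitution gives $\varphi(x^{\star})=\frac{k^{2}}{2(1-Q)}-\frac{k^{2}}{4(1-Q)}=\frac{k^{2}}{4(1-Q)}$; translating back, $g$ is maximized at $x=y=\frac{k}{2(1-Q)}$ with value $\frac{k^{2}}{4(1-Q)}$, as claimed. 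In the regime $1/2<Q<1$ we have $x^{\star}>k$, so for every $x\in[0,k]$ the derivative $\varphi'(x)=k-2(1-Q)x\ge k-2(1-Q)k=k(2Q-1)>0$; thus $\varphi$ is increasing on $[0,k]$ and attains its maximum at $x=k$, with $\varphi(k)=k^{2}-(1-Q)k^{2}=Qk^{2}$, so $g$ is maximized at $x=y=k$ with value $Qk^{2}$.

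I do not expect a serious obstacle here; the step requiring most care is the case split driven by the position of $x^{\star}$ relative to $k$, together with the sanity check that the two closed forms agree at the boundary $Q=1/2$ (there $\frac{k}{2(1-Q)}=k$ and $\frac{k^{2}}{4(1-Q)}=\tfrac12 k^{2}=Qk^{2}$), which confirms the cases are consistent. An alternative route I would mention is purely via concavity: the Hessian of $g$ is $\left(\begin{smallmatrix}-1 & Q\\ Q & -1\end{smallmatrix}\right)$, with eigenvalues $-(1-Q)<0$ and $-(1+Q)<0$, so $g$ is strictly concave on $\mathbb{R}^{2}$ with unique critical point $x=y=\frac{k}{2(1-Q)}$; this settles $Q\le 1/2$ at once, while for $Q>1/2$ one computes $\nabla g(k,k)=\bigl(k(Q-\tfrac12),\,k(Q-\tfrac12)\bigr)$ with both coordinates positive and concludes from the supporting-hyperplane inequality $g(x,y)\le g(k,k)+\langle\nabla g(k,k),(x,y)-(k,k)\rangle\le g(k,k)$ on $[0,k]^{2}$.
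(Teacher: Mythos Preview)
Your proof is correct, and it takes a genuinely different route from the paper. The paper proceeds by computing the partial derivatives, locating the interior critical point $\bigl(\tfrac{k}{2(1-Q)},\tfrac{k}{2(1-Q)}\bigr)$, and then separately examining the boundary edges $x=0$ and $x=k$ (invoking the $x\leftrightarrow y$ symmetry to avoid the other two), obtaining the candidate maxima $k^2/8$, $Qk^2$, and $k^2(Q^2+Q+1/4)/2$, and finally comparing all of these against $\tfrac{k^2}{4(1-Q)}$ by case analysis on $Q\le 1/2$ versus $Q>1/2$.

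Your change of variables $s=x+y$, $d=x-y$ exploits the symmetry more fully: since the $d^2$ coefficient is negative and $d=0$ is always feasible when $(x,y)\in[0,k]^2$, the two-variable problem collapses immediately to maximizing the single parabola $\varphi(x)=kx-(1-Q)x^2$ on $[0,k]$, and the case split on $Q$ arises naturally from whether the vertex $x^\star=\tfrac{k}{2(1-Q)}$ lies inside the interval. This is shorter and avoids the somewhat tedious boundary-value comparisons of the paper; the paper's method, on the other hand, is the standard critical-point-plus-boundary recipe and requires no preliminary insight. Your alternative Hessian argument is also correct and gives yet a third path. All three are valid; yours is the cleanest.
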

	\begin{proof}
		We have 
		\[
		%\left\{ 
		\begin{array}{lll} \vspace{0.1cm}
			\frac{\partial g}{\partial x}=Qy+\frac{k}{2}-x,  \\
			\frac{\partial g}{\partial y}=Qx+\frac{k}{2}-y. \\
		\end{array}
		%\right.
		\]\\
		Thus, the critical point is $(\frac{k}{2(1-Q)},\frac{k}{2(1-Q)})$, and $g(\frac{k}{2(1-Q)},\frac{k}{2(1-Q)})=\frac{k^2}{4(1-Q)}$. Since $g(x,y)$ is continuous and its first partial derivatives always exist, the optimal value either lies on the boundary or at a critical point. Therefore, we only need to compare the value at the  critical point to those on the boundary. Since $g(x,y)$ is a symmetric function, the cases below will be considered separately for $x=k$ and $x=0$.  If $x=k$ then observe that for $Q>1/2$ the maximum of $g(k,y)$ is attained at $y=k$ and $g(k,k)=Qk^2$ and for $Q\le 1/2$ the maximum of $g(k,y)$ is attained for $y=k(Q+1/2)$ and $g(k,k(Q+1/2))=k^2\frac{Q^2+Q+1/4}{2}.$ If $x=0$ then the maximum of $g(x,y)$ is attained at $(0,k/2)$ and $g(0,k/2)=k^2/8.$
		
		If $Q \leq 1/2$, then we have $\frac{k}{2(1-Q)}\leq k$ and $\frac{k^2}{4(1-Q)}\geq \max\{\frac{Q^2+Q+1/4}{2}k^2,\frac{k^2}{8}\}$ which proves the first part. 
		If $Q > 1/2$, the $\frac{k}{2(1-Q)}>k$ and therefore the results follows from $Qk^2\geq \frac{k^2}{8}$. 
		\end{proof}
	
	\begin{theorem} \label{boundUpperf}
		Let $\theta$ satisfy $0 \leq \theta \leq 1$. 
		
		\begin{itemize}
			\item If  $0 \leq \theta \leq 1/3$, then for every $\epsilon>0$ there exists a 
			digraph $D$, with $\theta(D)=\theta$, which satisfies 
			
			\[
			{\rm mac}(D) < w(D) \cdot (1+ \epsilon) l(\theta).
			\]
			
			\item If $1/3 < \theta \leq 1$, then there exists a 
			digraph $D$, with $\theta(D)=\theta$, which satisfies ${\rm mac}(D)=\theta \cdot w(D)$ ($=l(\theta)\cdot w(D)$).
		\end{itemize}
	\end{theorem}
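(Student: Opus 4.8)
The plan is to realise \emph{both} items with a single two-parameter family of digraphs, and then to read off the two bullets by specialising the parameters to the two regimes of Lemma~\ref{lem:maxg}. Fix $n=2k+1$ and let $D=D(n,\mu,\lambda)$ be obtained from two disjoint copies $P$ and $N$ of the $k$-regular tournament $T_k$ by giving every arc inside $P$ and inside $N$ weight $\mu\ge 0$ and adding all $n^2$ arcs directed from $P$ to $N$, each of weight $\lambda\ge 0$. Since $T_k$ is regular, $r(v)=n\lambda$ for $v\in P$ and $r(v)=-n\lambda$ for $v\in N$, so $r^+(D)=n^2\lambda$ and $w(D)=n(n-1)\mu+n^2\lambda$, hence $\theta(D)=\frac{n^2\lambda}{n(n-1)\mu+n^2\lambda}$. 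The crucial observation is that in a regular tournament on $n$ vertices any set $S$ with $|S|=m$ sends exactly $mk-\binom m2=m(n-m)/2$ arcs to its complement (sum the out-degrees over $S$ and subtract the $\binom m2$ arcs inside $S$). Therefore, for a vertex partition $(X,Y)$ of $D$ with $x:=|P\cap X|$ and $y:=|N\cap Y|$, the dicut weight is \emph{exactly} $g(x,y):=\lambda xy+\tfrac\mu2 x(n-x)+\tfrac\mu2 y(n-y)$, so that $\mathrm{mac}(D)=\max\{g(x,y): x,y\in\{0,1,\dots,n\}\}$. This is precisely the optimisation of Lemma~\ref{lem:maxg}, with $k:=n$ and $Q:=\lambda/\mu$.

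\textbf{The second bullet ($1/3<\theta\le1$).} Here I would take $\lambda:=1$ and choose $\mu$. Then $g(n,n)=n^2=r^+(D)$, and a short comparison of the ``slack ratio'' $\frac{n(i+j)-ij}{i(n-i)+j(n-j)}$ (writing $x=n-i,\ y=n-j$) shows that among all integer points the only one that can rival $(n,n)$ is $(n-1,n-1)$, where $g(n-1,n-1)=(n-1)^2+(n-1)\mu$; this is $\le n^2$ exactly when $\mu\le\frac{2n-1}{n-1}$. For such $\mu$ one gets $\mathrm{mac}(D)=n^2=r^+(D)$. As $\theta(D)=\frac{n^2}{n(n-1)\mu+n^2}$ decreases continuously from $1$ (at $\mu=0$) to $\frac{n}{3n-1}$ (at $\mu=\frac{2n-1}{n-1}$), given $\theta>1/3$ I would pick $n$ with $\frac{n}{3n-1}<\theta$ and then $\mu:=\frac{n(1-\theta)}{(n-1)\theta}$ (which is $<\frac{2n-1}{n-1}$, so $(n,n)$ is the unique maximiser); this $D$ has $\theta(D)=\theta$ and $\mathrm{mac}(D)=r^+(D)=\theta\cdot w(D)=l(\theta)w(D)$. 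The endpoint $\theta=1$ is just a single arc.

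\textbf{The first bullet ($0\le\theta\le1/3$).} Now I would fix $\mu:=1$ and set $\lambda:=\frac{\theta(n-1)}{n(1-\theta)}$; this equals $0$ at $\theta=0$ (so $D=T_k$ and $\mathrm{mac}(T_k)/w(T_k)=\frac{k+1}{4k+2}\to\frac14=l(0)$), and $\lambda<\frac12$ whenever $\theta<\frac13$. One checks that $\theta(D)=\theta$ exactly. Since $Q=\lambda\le\frac12$, Lemma~\ref{lem:maxg} gives $\mathrm{mac}(D)\le\max_{[0,n]^2}g=\frac{n^2}{4(1-\lambda)}$, whence $\frac{\mathrm{mac}(D)}{w(D)}\le\frac{n}{4(1-\lambda)\big((n-1)+n\lambda\big)}$. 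Letting $n\to\infty$, so that $\lambda\to\frac{\theta}{1-\theta}$, the right-hand side tends to $\frac{1}{4(1-\lambda^2)}=\frac{(1-\theta)^2}{4(1-2\theta)}=l(\theta)$, using the identity $l(\theta)=\tfrac14+\tfrac{\theta^2}{4(1-2\theta)}=\tfrac{(1-\theta)^2}{4(1-2\theta)}$. Hence for $n$ large enough, $\mathrm{mac}(D)<(1+\epsilon)\,l(\theta)\,w(D)$, as required.

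\textbf{Where the work is.} The arc/degree bookkeeping and the $n\to\infty$ limit are routine. The genuinely delicate point is the \emph{exact} equality $\mathrm{mac}(D)=r^+(D)$ in the second bullet. It uses both the exact value $m(n-m)/2$ of a dicut in a regular tournament and the fact that we maximise $g$ over the integer grid rather than over $[0,n]^2$: at the critical value $\mu=\frac{2n-1}{n-1}$ the real maximum of $g$ strictly exceeds $n^2$ (its maximiser sits at $x=y=n-\tfrac12$), yet the integer maximum is exactly $n^2$, attained at $(n,n)$ (and, in the limiting case, also at $(n-1,n-1)$). So the calculation I expect to need most care is the verification that $\frac{n(i+j)-ij}{i(n-i)+j(n-j)}$, over integers $i,j\ge1$, is minimised at $i=j=1$, which is what pins down the admissible range of $\mu$ and hence the full interval $(1/3,1]$.
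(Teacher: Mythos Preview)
Your construction and overall approach are essentially the same as the paper's: both take two disjoint regular tournaments with unit-weight internal arcs and a complete layer of weight-$Q$ arcs between them, then invoke Lemma~\ref{lem:maxg} on the resulting quadratic $g(x,y)$. For the first bullet your limit computation matches the paper's.

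For the second bullet you work harder than necessary. You fix $\lambda=1$, let $\mu$ range up to the critical value $\tfrac{2n-1}{n-1}$, and then need the integer-grid argument (the ``slack ratio'' minimisation) because near that critical value the \emph{real} maximum of $g$ lies strictly inside the square. The paper avoids this entirely: it keeps all internal arcs at weight~$1$, sets the cross-arc weight to $Q=\tfrac{\theta(1-1/k)}{1-\theta}$, and simply takes $k$ large enough that $Q>1/2$ (possible whenever $\theta>1/3$). Then the second clause of Lemma~\ref{lem:maxg} says the maximum of $g$ over the whole real square $[0,k]^2$ is attained at the corner $(k,k)$, which is an integer point, so ${\rm mac}(D_k)=Qk^2=r^+(D_k)$ immediately---no grid analysis needed. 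Your route is correct, but the observation that $Q>1/2$ forces the corner to be the global (real) maximiser makes the ``delicate point'' you highlight disappear.
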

	
	\begin{proof}
		If $\theta=1$ then ${\rm mac}(D) = w(D) = r^+(D)$ always holds, so assume that $\theta <1$. 
		Let $D_k$ be a digraph consisting of two vertex disjoint regular tournament, $A_k$ and $B_k$, of order $k$ and containing 
		all arcs from $A_k$ to $B_k$ and no arcs from $B_k$ to $A_k$. Define $Q$ as follows.
		
		\[
		Q = \frac{\theta(1-1/k)}{1-\theta}.
		\]
		
		%We let $k$ be large enough so $Q \leq 1/2$, which is possible as $\theta < 1/3$.
		Let the weight of every arc from $A_k$ to $B_k$ be $Q$ and let the weight of
		each arc in $A_k$ and in $B_k$ be one.
		
		Let $(X,Y)$ be a maximum cut in $D_k$ and let $x = |V(A_k) \cap X|$ and $y=|V(B_k) \cap Y|$. Then the following holds.
		
		\begin{itemize}
			\item ${\rm mac}(D_k) = Qxy + x(k-x)/2 + y(k-y)/2$, as the cut contains $x(k-x)/2$ arcs from $A_k$ and $y(k-y)/2$ arcs from $B_k$ (since $d^+_T(x)=d^-_T(x)$ for every $x\in V(T)$, where $T\in \{A_k,B_k\}$, for every $S\subseteq V(T)$ the number of arcs leaving $S$ is equal to the number of arcs entering $S$).
			\item $r^+(D_k) = Qk^2$, as $r(a)=kQ$ for all $a \in V(A_k)$ and $r(b)=-kQ$ for all $b \in V(B_k)$.
			\item $w(D_k) = 2 \cdot {k \choose 2} + Qk^2 = k^2 - k + Qk^2$.
			\item $\theta(D_k) = r^+(D_k)/w(D_k) = Qk^2 /(k^2 - k + Qk^2) = Q/(1+Q-1/k)$.
			\item $\theta(D_k) = \theta$, as we defined $Q = \theta(1-1/k)/(1-\theta)$ which is equivalent to $\theta(1+Q-1/k)=Q$ and therefore holds by the above statement.
		\end{itemize}
		Assume that $0 \leq \theta \leq 1/3$.  Note that $Q< 1/2$ for any  $k$. Choose $k$ large enough  such that 
		\[  \frac{1}{4(1-Q^2) - 4(1-Q)/k}  \leq (1+\epsilon)  \frac{1}{4(1-Q^2) }.\]
		Let $\theta=\theta(D_k)$. Then by Lemma~\ref{lem:maxg}, our choice of $k$ and the definition of $Q$, we have 
		\[
		\begin{array}{rcl}
			{\rm mac}(D_k) & \leq & \frac{k^2}{4(1-Q)} \\
			& = & (k^2 - k + Qk^2) \cdot \left( \frac{k^2}{4(1-Q)(k^2 - k + Qk^2)} \right) \\
			& = & w(D) \left( \frac{1}{4(1-Q)(1 - 1/k + Q)} \right) \\
			& = & w(D) \left( \frac{1}{4(1-Q^2) - 4(1-Q)/k} \right) \\
			&\leq & (1+\epsilon) w(D) \left( \frac{1}{4} + \frac{\theta^2}{4(1-2\theta)} \right) .
		\end{array}
		\]

		%\[
		%\begin{array}{rcl}
		%	\frac{1}{4(1-Q^2)}  & = & \frac{1}{4 \left(1- \left( \frac{\theta(1-1/k)}{1-\theta} \right)^2 \right)} \\
		%	& \leq & \frac{1}{4 \left(1- \left( \frac{\theta}{1-\theta} \right)^2 \right)} \\
		%	& = & \frac{1}{4}Â + \left(    \frac{(1-\theta)^2}{4\left((1-\theta)^2 - \theta^2 \right)}    - \frac{1}{4}Â  \right) \\
		%	& = & \frac{1}{4}Â + \frac{1}{4} \left(    \frac{1-2\theta+\theta^2}{1-2\theta}    - \frac{1-2\theta}{1-2\theta}Â  \right) \\
		%	& = & \frac{1}{4}Â + \frac{1}{4} \left(    \frac{\theta^2}{1-2\theta} \right) \\
		%	& = & \frac{1}{4}Â +  \frac{\theta^2}{4(1-2\theta)} \\
		%	%& = & \frac{1}{4}Â + \frac{\theta^2}{4} \left( 1 + \frac{1}{1-2\theta} - \frac{1-2\theta}{1-2\theta} \right) \\
		%	%& = & \frac{1}{4}Â + \theta^2 \left( \frac{1}{4} + \frac{\theta}{2(1-2\theta)} \right) \\
		%\end{array}
		%\]

		If $1/3 < \theta \leq 1$, then  we choose $k$ large enough that $Q>1/2$. By Lemma~\ref{lem:maxg}  and our earlier observations
		\[ {\rm mac}(D_k) = Qk^2=  r^+(D_k)  = \theta \cdot w(D_k).\]  	\end{proof}

		\begin{definition} \label{maxDegG}
			For every $\theta\in [0,1]$, let $f(\theta)$  be the supremum of all reals $g$ such that  ${\rm mac}(D) \geq g \cdot  w(D)$ holds for all 
			weighted digraphs $D$ with $\theta=\theta(D)$.
		\end{definition}

The next theorem follows from Lemma \ref{boundLowerf} and Theorem \ref{boundUpperf}.
	
	\begin{theorem} \label{boundf}
		We have	$f(\theta) = l(\theta)$.
	\end{theorem}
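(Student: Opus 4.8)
The plan is to assemble the claimed identity $f(\theta)=l(\theta)$ directly from the two results that immediately precede it, so that the proof is essentially a bookkeeping argument about the definition of a supremum. Recall that $f(\theta)$ is defined as the supremum of all reals $g$ for which $\mathrm{mac}(D)\ge g\cdot w(D)$ holds for \emph{every} weighted digraph $D$ with $\theta(D)=\theta$. I would first establish the inequality $f(\theta)\ge l(\theta)$. This is immediate from Lemma~\ref{boundLowerf}: that lemma says $\mathrm{mac}(D)\ge l(\theta(D))\cdot w(D)$ for every weighted digraph $D$, so in particular for every $D$ with $\theta(D)=\theta$ we have $\mathrm{mac}(D)\ge l(\theta)\cdot w(D)$; hence $l(\theta)$ is one of the reals $g$ over which the supremum defining $f(\theta)$ is taken, and therefore $f(\theta)\ge l(\theta)$.

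For the reverse inequality $f(\theta)\le l(\theta)$ I would split into the two regimes used throughout this section. If $1/3<\theta\le 1$, Theorem~\ref{boundUpperf} gives a digraph $D$ with $\theta(D)=\theta$ and $\mathrm{mac}(D)=l(\theta)\cdot w(D)$ exactly; consequently no $g>l(\theta)$ can satisfy $\mathrm{mac}(D)\ge g\cdot w(D)$ for all such $D$, so $f(\theta)\le l(\theta)$. The case $\theta=1/3$ can be folded into the $\theta\ge 1/3$ branch since $l$ is continuous there, or handled by the $\theta\le 1/3$ argument below; I would just note $l(1/3)=1/3$ and use whichever of the two constructions applies. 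If $0\le\theta<1/3$, Theorem~\ref{boundUpperf} instead gives, for every $\epsilon>0$, a digraph $D$ with $\theta(D)=\theta$ satisfying $\mathrm{mac}(D)<(1+\epsilon)\,l(\theta)\cdot w(D)$. Thus for any fixed $g>l(\theta)$, choosing $\epsilon$ small enough that $(1+\epsilon)l(\theta)<g$ (possible because $l(\theta)>0$) produces a witness digraph $D$ with $\mathrm{mac}(D)<g\cdot w(D)$, so $g$ is not admissible in the supremum; since this holds for every $g>l(\theta)$, we get $f(\theta)\le l(\theta)$.

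Combining the two inequalities yields $f(\theta)=l(\theta)$ for all $\theta\in[0,1]$. There is no real obstacle here — the content of the theorem lives entirely in Lemma~\ref{boundLowerf} and Theorem~\ref{boundUpperf} — so the only thing to be careful about is the logical structure of the supremum: on the lower-bound side one exhibits $l(\theta)$ as an admissible value, and on the upper-bound side one shows every value strictly above $l(\theta)$ fails, using the exact construction when $\theta\ge 1/3$ and the $(1+\epsilon)$-approximate family of constructions when $\theta<1/3$. The mild subtlety worth a sentence is why $l(\theta)>0$ (so that the multiplicative $\epsilon$-slack can be made to undercut any $g>l(\theta)$): for $\theta<1/3$ we have $l(\theta)\ge 1/4>0$ directly from the formula, which is all that is needed.

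\begin{proof}
By Lemma~\ref{boundLowerf}, $\mathrm{mac}(D)\ge l(\theta(D))\cdot w(D)$ for every weighted digraph $D$; in particular $\mathrm{mac}(D)\ge l(\theta)\cdot w(D)$ for every $D$ with $\theta(D)=\theta$, so $l(\theta)$ is one of the reals $g$ in Definition~\ref{maxDegG} and hence $f(\theta)\ge l(\theta)$.

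For the reverse inequality, let $g>l(\theta)$ be arbitrary; we show $g$ is not admissible in Definition~\ref{maxDegG}. If $1/3<\theta\le 1$, Theorem~\ref{boundUpperf} provides a digraph $D$ with $\theta(D)=\theta$ and $\mathrm{mac}(D)=l(\theta)\cdot w(D)<g\cdot w(D)$. If $0\le\theta\le 1/3$, then $l(\theta)=\frac14+\frac{\theta^2}{4(1-2\theta)}\ge\frac14>0$, so we may pick $\epsilon>0$ with $(1+\epsilon)l(\theta)<g$; by Theorem~\ref{boundUpperf} there is a digraph $D$ with $\theta(D)=\theta$ and $\mathrm{mac}(D)<(1+\epsilon)l(\theta)\cdot w(D)<g\cdot w(D)$. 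In either case the inequality $\mathrm{mac}(D)\ge g\cdot w(D)$ fails for some $D$ with $\theta(D)=\theta$, so $g$ does not satisfy the condition in Definition~\ref{maxDegG}. As this holds for every $g>l(\theta)$, we conclude $f(\theta)\le l(\theta)$, and therefore $f(\theta)=l(\theta)$.
\end{proof}
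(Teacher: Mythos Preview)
Your proof is correct and takes essentially the same approach as the paper, which simply states that the theorem follows from Lemma~\ref{boundLowerf} and Theorem~\ref{boundUpperf}. You have merely spelled out the supremum argument in detail; the only cosmetic point is that at $\theta=1/3$ the definition gives $l(\theta)=\theta$ rather than the first formula, but since the two expressions agree there this does not affect the argument.
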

	%\marginpar{\tiny Move forward? Why $mac(D) \geq 1/4$}
	
	\section{Bounds for Weighted Acyclic Digraphs}\label{sec: bounds for acyclic directed multigraphs}

	Recall that $h_{\mu}(m)$ is the maximum integer such that each acyclic directed multigraph with $m$ arcs has a dicut of size at least $h_{\mu}(m)$. Let ${\cal D}_{\omega}(w)$ be the set of weighted digraphs $D$ of weight $w=w(D)$ such that each arc of $D$ has weight at least one. Then for each $w\ge 1$, let $h_{\omega}(w)$ be the supremum of reals $g$ such that every acyclic $D\in {\cal D}_{\omega}(w)$ has a dicut of weight at least $g$. Clearly, $h_{\omega}(m)\le h_{\mu}(m)$ for each integer $m\ge 1$.  In this section, we focus on giving bounds for $h_\omega(m)$. We first show in Theorem \ref{thm:upperB} that $h_\mu (m)=\frac{m}{4}+O(m^{3/4})$ implying $h_\omega (w)=\frac{w}{4}+O(w^{3/4})$. Then, in Theorem \ref{thm:dag} we prove that $h_\omega (w)=\frac{w}{4}+\Omega(w^{3/5})$.

	\begin{theorem} \label{thm:upperB}
		%There exists a constant $k_1$, such that for infinitely many $r$ there exists an acyclic digraph $D_r$ with longest path length $r$
		%and ${\rm mac}(D_r) \leq \frac{w(D_r)}{4} + k_1 \cdot  w(D_r)^{3/4}$.
		There exists a constant $k_1$, such that for every integer $m\ge 1$ there exists an acyclic directed multigraph $D'_{m}$ with $m$ arcs
		such that 
		\[
		{\rm mac}(D'_m) \leq   \frac{m}{4} +  k_1 m^{3/4}.
		\]
	\end{theorem}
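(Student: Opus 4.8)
The natural plan is to exhibit an explicit acyclic directed multigraph $D'_m$ on $m$ arcs and to bound $\mathrm{mac}(D'_m)$ from above via Theorem~\ref{thm:basic}(b). An acyclic digraph has no $2$-cycles, so the underlying graph $G:=UG(D'_m)$ satisfies $w(G)=m$, and Theorem~\ref{thm:basic}(b) gives $\mathrm{mac}(D'_m)\le \tfrac12\bigl(\mathrm{mac}(G)+r^+(D'_m)\bigr)$. Hence it suffices to design $D'_m$ so that simultaneously (i) $\mathrm{mac}(G)\le m/2+O(m^{3/4})$ — i.e.\ $G$ is ``cut-balanced'', no cut separating noticeably more than half of $w(G)$ — and (ii) $r^+(D'_m)=O(m^{3/4})$. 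Condition (i) points to assembling $G$ from dense, clique-like pieces of some order $s$: on a weighted clique of order $s$ the maximum cut is only $\tfrac12 w+\Theta(w/s)$, and this is inherited by disjoint unions, so the total max-cut surplus over all pieces is $\Theta(m/s)$. Balancing this surplus against the imbalance term then forces the choice $s\approx m^{1/4}$, which is exactly the source of the exponent $3/4$; once $s$ is fixed one takes $k_1$ large enough to absorb constants and lower-order terms, and patches arbitrary $m\ge 1$ by shrinking the last piece and appending a bounded number of isolated arcs.

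The step I expect to be the genuine difficulty is reconciling (i) and (ii), which pull in opposite directions. Local density is what makes $\mathrm{mac}(G)$ small, but a locally dense acyclic digraph cannot be balanced: the only acyclic orientation of $K_s$ is the transitive tournament $TT_s$, whose imbalance $r^+(TT_s)=\Theta(s^2)$ is a constant fraction of its $\binom{s}{2}$ arcs, so already a disjoint union of transitive tournaments has $r^+=\Theta(m)$ and hence, by \eqref{eq:easybound}, $\mathrm{mac}=\Theta(m)$ — far too large. The construction must therefore decouple these. The obvious repair — lining the pieces up and adding auxiliary arcs from each piece's net-sinks to the next piece's net-sources — does collapse $r^+$ down to the imbalance of a single piece, $\Theta(s^2)=O(m^{3/4})$, but it introduces bipartite substructures between consecutive pieces that a single $\pm1$ assignment can cut in full while still cutting every clique in half, which would push $\mathrm{mac}(G)$ up to about $\tfrac23 m$ and destroy (i). So the actual construction needs pieces in which net-sources and net-sinks are interleaved rather than segregated into ``early'' and ``late'' halves, with the repair arcs spread out and routed so that no $\pm1$ assignment is simultaneously near-optimal for the clique parts and for the repair parts.

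In short, the whole weight of the proof lies in establishing that ``cut-balanced'' and ``nearly-balanced-orientable'' can coexist up to an error of $O(m^{3/4})$; verifying $w(UG(D'_m))=m$, the arithmetic of the error terms, and the small-$m$ cases are all routine once the construction is in hand.
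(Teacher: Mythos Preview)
Your high-level framework is the paper's: bound $\mathrm{mac}(D'_m)$ by $\tfrac12(\mathrm{mac}(G)+r^+)$, build $D'_m$ out of clique-like pieces of order $s\approx m^{1/4}$ so that $\mathrm{mac}(G)\le m/2+O(m/s)$, and keep $r^+=O(m^{3/4})$. You have also correctly located the crux --- reconciling local density with small imbalance --- and your analysis of why disjoint transitive tournaments, with or without a chain of repair arcs, fail is accurate.

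The gap is that you never produce a construction; you only list the specifications it must meet. Your paradigm of ``start acyclic and add repair arcs'' is in fact the hard direction, because any batch of repair arcs large enough to cancel $\Theta(m)$ of imbalance forms a bipartite system of size $\Theta(m)$ that some cut takes in full. The paper inverts the idea: it starts from a \emph{regular} (hence perfectly balanced but cyclic) directed multigraph and \emph{deletes} $O(m^{3/4})$ arcs to make it acyclic. Concretely, fix $n$, set $q=\lfloor\sqrt n\rfloor$, put $v_1,\dots,v_n$ on a cycle, and let $D=\bigcup_{i=1}^n T_i$, where $T_i$ is the transitive tournament on the $q$ consecutive vertices $v_i,\dots,v_{i+q-1}$ (indices mod $n$). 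By cyclic symmetry $D$ is regular, so $\mathrm{mac}(D)\le \tfrac12\mathrm{mac}(UG(D))$; and since $UG(D)$ is a union of $n$ copies of $K_q$, any cut of $UG(D)$ has weight at most $nq^2/4$, giving $\mathrm{mac}(D)\le nq^2/8$. Deleting the $O(q^3)=O(n^{3/2})$ arcs that wrap around yields the acyclic $D_n$ with $|A(D_n)|=\tfrac{nq^2}{2}-O(nq)$, and since $nq$ and $q^3$ are both $O(|A(D_n)|^{3/4})$ one obtains $\mathrm{mac}(D_n)\le |A(D_n)|/4+O(|A(D_n)|^{3/4})$. A routine interpolation (the $|A(D_n)|$ are $O(n^{3/2})$-spaced, so deleting at most that many further arcs reaches any prescribed $m$) finishes.

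So the ``interleaving'' you were groping for is achieved not by routing repair arcs cleverly, but by letting the clique pieces overlap cyclically so that regularity is automatic; acyclicity is then purchased at the end for a price of $O(m^{3/4})$ arcs, which is exactly the error budget.
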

	
	\begin{proof}
		We will first construct a directed multigraph $D$ on $n\geq 4$ vertices as follows. Let $V(D)=\{v_1,v_2,\ldots,v_n\}$ and
		let $q=\lfloor \sqrt{n} \rfloor$.
		Let $T_i$ denote the transitive tournament on $q$ vertices $\{v_i,v_{i+1},\ldots,v_{i+q-1}\}$ where
		all indices are taken modulo $n$ and all arcs point forward in the ordering $(v_i,v_{i+1},\ldots,v_{i+q-1})$.
		%We will determine $q \geq 2$ and $n$ later.
		Let $A(D) = \cup_{i=1}^n A(T_i)$ and note that $D$ is a regular directed multigraph. Furthermore, $|A(D)|= n {q \choose 2}$.
		
		Let $(X,Y)$ be an optimal cut in $D$. As $(X,Y)$ contains at most $q^2/4$ edges from the underlying graph of $T_i$ for each
		$i=1,2,\ldots,n$ (as the underlying graph of $T_i$ is a complete graph on $q$ vertices), we note that $(X,Y)$ contains at most 
		$nq^2/4$ edges from the underlying multigraph of $D$. As $D$ is regular we note that 
		${\rm mac}(D) \leq nq^2/8$.
		
		Let $D_n$ be obtained from $D$ by deleting all arcs $v_j v_i$ with $j>i$. We note that we delete
		exactly $s(q-s)$ arcs from $T_{n+1-s}$ for each $s=1,2,3, \ldots, q-1$. Therefore, 
		\begin{align*}
			|A(D_n)| & =  |A(D)| - \sum_{s=1}^{q-1} (qs-s^2)\\
			&  =  \frac{nq(q-1)}{2} - q \left( \sum_{s=1}^{q-1} s \right) + \left( \sum_{s=1}^{q-1} s^2 \right) \\ 
			& =  \frac{nq^2}{2} - \frac{nq}{2} - q \frac{q(q-1)}{2} + \frac{(q-1)q(2q-1)}{6} \\ 
			& =  \frac{nq^2}{2} - \frac{nq}{2} - \frac{3q^3-3q^2}{6} + \frac{2q^3 -3q^2 + q}{6} \\ 
			& =  \frac{nq^2}{2} - \frac{nq}{2} - \frac{q^3}{6} + \frac{q}{6} .
		\end{align*}
                Isolating the $\frac{nq^2}{2}$-term and then 
		dividing through by four implies that
		
		\[
		{\rm mac}(D_n)  \leq  {\rm mac}(D) 			
			 \leq  \frac{nq^2}{8} 
			 =  \frac{|A(D_n)|}{4} + \frac{nq}{8} + \frac{q^3}{24} - \frac{q}{24} 
			 \leq  \frac{|A(D_n)|}{4} + \frac{3nq+q^3}{24} . 		\]
		By the definition of $q$, we have $n= \alpha q^2$ for some $\alpha\ge 1$,  and thus
		\[
		{\rm mac}(D_n)  \leq   \frac{|A(D_n)|}{4} + q^3 \times \frac{3\alpha +1}{24} .
		\]
		Also note that 
		\[
		|A(D_n)|^{3/4} = \left( \frac{nq^2}{2} - \frac{nq}{2} - \frac{q^3}{6} + \frac{q}{6} \right)^{3/4}
		= q^3 \left( \frac{\alpha}{2} - \frac{\alpha}{2q} - \frac{1}{6q} + \frac{1}{6q^3} \right)^{3/4} 
		\]
		which implies that 
		\[
		{\rm mac}(D_n) \leq   \frac{|A(D_n)|}{4} +  \frac{|A(D_n)|^{3/4}}{\left( \frac{\alpha}{2} - \frac{\alpha}{2q} - \frac{1}{6q} + \frac{1}{6q^3} \right)^{3/4}} \times \frac{3\alpha +1}{24} .
		\]
		Since $\alpha = n/q^2$, we have 
		\[
		1 \leq \frac{n}{\lfloor \sqrt{n} \rfloor^2} = \alpha = \frac{(\sqrt{n})^2}{q^2} \leq \frac{(q+1)^2}{q^2} = \left( 1 + \frac{1}{q} \right)^2 \leq 2.25 .
		\]
		Therefore,
		\[
		\frac{3\alpha +1}{24\left( \alpha \left(\frac{1}{2} - \frac{1}{2q} \right) - \frac{1}{6q} + \frac{1}{6q^3} \right)^{3/4}} 
		\leq \frac{7.75}{24\left( \frac{1}{2} - \frac{1}{2q} - \frac{1}{6q}\right)^{3/4}}  
		\leq  \frac{7.75}{24\left( \frac{1}{2} - \frac{1}{4} - \frac{1}{12}\right)^{3/4}} .
		\]
		So, if we let $k'_1 = \frac{7.75}{24\left( \frac{1}{2} - \frac{1}{4} - \frac{1}{12}\right)^{3/4}}
		$, then 
		\[
		{\rm mac}(D_n) \leq   \frac{|A(D_n)|}{4} +  k'_1 |A(D_n)|^{3/4}.
		\]
		
		To complete the proof, we will show that the bound can be extended to every directed multigraph $D'_m$ with $n$ vertices and $m\ge 1$ arcs.
	       For any integer $n\ge 2$, define the function 
	       \[ f(n)=\frac{nq^2}{2} - \frac{nq}{2} - \frac{q^3}{6} + \frac{q}{6},\] where $q=\lfloor \sqrt{n} \rfloor.$ 
		We first prove that $f(n)$ is a monotonically increasing function. Let $n\ge 3.$ Suppose first that $\lfloor \sqrt{n} \rfloor=\lfloor \sqrt{n-1} \rfloor=q.$
		Then \[ f(n)-f(n-1)=q(q-1)/2\ge 0.\] Now suppose that $\lfloor \sqrt{n-1} \rfloor=q-1.$ Then
		\begin{align*}
			f(n)-f(n-1) & =  \frac{nq(q-1)}{2} - \frac{q^3}{6} + \frac{q}{6} - \left( \frac{(n-1)(q-1)(q-2)}{2}  - \frac{(q-1)^3}{6} + \frac{q-1}{6}  \right) \\ 
			& =   (q-1)(n-1)\ge 0.
		\end{align*}
		Since $q/2\leq n-1$, we always have $f(n)-f(n-1)\leq (q-1)(n-1)$. Let $n$ be the smallest integer such that $m \leq |A(D_n)| = f(n)$, i.e., 
		$f(n-1) < m \leq f(n).$
		Let $m_n = |A(D_n)|$. Let $D'_m$ be obtained from $D_n$ by deleting any $m_n-m$ arcs from $D_n$. Note that
		\[
			m_n-m =  f(n) - m 
		         \le  f(n) - f(n-1) 
			 \le  (q-1)(n-1)  
			 \le  nq 
			 \le   n^{3/2} .  
		\]
		Furthermore,
		\begin{align*}
			m & >  f(n-1)\\
			& \ge    \frac{(n-1)\lfloor \sqrt{n-1} \rfloor^2}{2} - \frac{(n-1)\lfloor \sqrt{n-1} \rfloor}{2} - \frac{\lfloor \sqrt{n-1} \rfloor^3}{6} + \frac{\lfloor \sqrt{n-1} \rfloor}{6} \\
			& >   \frac{(n-1)(\sqrt{n-1} -1)^2}{2} - \frac{(n-1)\sqrt{n-1}}{2} - \frac{\sqrt{n-1}^3}{6} \\
			& =  \frac{n^2}{2} - \frac{n}{2} - \frac{5(n-1)\sqrt{n-1}}{3} . \\
		\end{align*}
		
		We will show that $m > \frac{n^2}{144}$ if  $n \geq 12$. This follows from the above if we show that  for all $n \geq 12$,
		\[
		\frac{n^2}{2} - \frac{n}{2} - \frac{5(n-1)\sqrt{n-1}}{3}  >  \frac{n^2}{144}.
		\]
		This is equivalent to 		\[
		n^2 \left( 1 - \frac{1}{72} \right)  > n + \frac{10(n-1)\sqrt{n-1}}{3}.
		\]
		As $n \geq 12$, we have $(1-1/72) \cdot \sqrt{n} >3.41$, which implies
		\[
		n^{3/2} \cdot n^{1/2} \left( 1 - \frac{1}{72} \right) > 3.41 \cdot n^{3/2} >  n + \frac{10(n-1)\sqrt{n-1}}{3}.
		\]
		So  $m > \frac{n^2}{144}$ if $n \geq 12$. If $n <12$ we have $m \geq 1 > n^2/144$, so 
		$m >  \frac{n^2}{144}$ always holds. This implies that $m^{3/4} > \frac{n^{3/2}}{144^{3/4}}$ and therefore $42 m^{3/4} > 144^{3/4} m^{3/4} > n^{3/2}$.
		Now,
		\begin{align*}
			{\rm mac}(D'_m) & \leq  {\rm mac}(D_n)  \leq   \frac{|A(D_n)|}{4} +  k'_1 |A(D_n)|^{3/4} \\
			& =   \frac{m + (m_n-m)}{4} +  k'_1 (m + (m_n'-m))^{3/4} \\
			& <  \frac{m + n^{3/2}}{4} +  k'_1 (m + n^{3/2})^{3/4} \\
			& <  \frac{m + 42m^{3/4}}{4} +  k'_1 (m + 42m^{3/4})^{3/4} \\
			& <  \frac{m}{4} + 11m^{3/4} +  k'_1 (43m)^{3/4} \\
			& =  \frac{m}{4} + \left(11 + k'_1 \cdot 43^{3/4} \right) m^{3/4}.  \\
		\end{align*}
	So the required bound holds for all $m$ with $k_1 = 11 + k'_1 \cdot 43^{3/4}$.
	\end{proof}
		By monotonicity of $h_\omega (w)$, Theorem \ref{thm:upperB} implies the following corollary.
	\begin{corollary}
	$h_\omega (w)=\frac{w}{4}+O(w^{3/4})$. 
	\end{corollary}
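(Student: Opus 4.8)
The plan is to prove, for every real $w\ge 1$, the two matching estimates $h_\omega(w)\ge w/4$ and $h_\omega(w)\le w/4+O(w^{3/4})$; the second is the real content and follows from Theorem~\ref{thm:upperB} by a simple padding argument, which is exactly the "monotonicity'' referred to in the statement. For the lower bound I would note that a uniformly random bipartition $(X,Y)$ of $V(D)$ places each arc $uv$ into the dicut $(X,Y)$ with probability $1/4$, so $\mathbb{E}(w(X,Y))=w(D)/4$ and hence $\mathrm{mac}(D)\ge w(D)/4$ for every weighted digraph $D$; in particular every acyclic $D\in{\cal D}_\omega(w)$ has a dicut of weight at least $w/4$, giving $h_\omega(w)\ge w/4$ (this is also Lemma~\ref{lem: induced bipartite digraph} applied with empty $R$).

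For the upper bound, fix a real $w\ge 1$ and set $m=\lfloor w\rfloor\ge 1$. By Theorem~\ref{thm:upperB} there is an acyclic directed multigraph $D'_m$ with $m$ arcs and $\mathrm{mac}(D'_m)\le m/4+k_1 m^{3/4}$. Contracting each class of parallel arcs of $D'_m$ into a single arc whose weight equals the size of that class produces a weighted acyclic digraph $\hat D$ in which every arc-weight is a positive integer; thus $w(\hat D)=m$, so $\hat D\in{\cal D}_\omega(m)$, and since the vertex partitions are unchanged and the weight of a dicut $(X,Y)$ in $\hat D$ equals the number of arcs from $X$ to $Y$ in $D'_m$, we get $\mathrm{mac}(\hat D)=\mathrm{mac}(D'_m)\le m/4+k_1 m^{3/4}$ (equivalently, one may simply quote the already-noted inequality $h_\omega(m)\le h_\mu(m)\le\mathrm{mac}(D'_m)$). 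Next I would increase the weight of one fixed arc of $\hat D$ by $w-m\in[0,1)$ to obtain a digraph $D^{*}$: it is still acyclic, all its arc-weights are $\ge 1$, and $w(D^{*})=w$, so $D^{*}\in{\cal D}_\omega(w)$; moreover the weight of every dicut grows by at most $w-m<1$, whence
\[
\mathrm{mac}(D^{*})\le\mathrm{mac}(\hat D)+1\le \frac{m}{4}+k_1 m^{3/4}+1\le \frac{w}{4}+k_1 w^{3/4}+1\le \frac{w}{4}+(k_1+1)w^{3/4},
\]
the last step using $w\ge 1$. By the definition of $h_\omega$ this gives $h_\omega(w)\le\mathrm{mac}(D^{*})\le w/4+(k_1+1)w^{3/4}$, and combining with the lower bound yields $h_\omega(w)=w/4+O(w^{3/4})$.

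I do not expect a genuine obstacle once Theorem~\ref{thm:upperB} is available: the argument is only a few lines. The two points that need a little care are the passage from the directed multigraph $D'_m$ to a bona fide member of ${\cal D}_\omega(m)$ (handled by the parallel-arc contraction above, or by the observed inequality $h_\omega(m)\le h_\mu(m)$), and checking that the fractional remainder $w-m<1$ is absorbed by the $O(w^{3/4})$ error term, which is valid precisely because $w\ge 1$ forces $1\le w^{3/4}$. (The same padding in fact shows that $h_\omega$ is non-decreasing up to an additive constant, which is all the "monotonicity'' the corollary needs.)
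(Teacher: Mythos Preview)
Your proof is correct and is essentially a fleshed-out version of the paper's one-line justification ``by monotonicity of $h_\omega(w)$''. The paper gives no further details; your padding argument (contract parallel arcs of $D'_m$ to land in $\mathcal{D}_\omega(m)$, then add $w-\lfloor w\rfloor<1$ to a single arc) is exactly the content that makes that phrase precise, and your observation that this yields $h_\omega(w_2)\le h_\omega(w_1)+(w_2-w_1)$ for $w_1<w_2$ is a cleaner statement than literal monotonicity (which, as you implicitly note, need not hold pointwise for very small $w$).
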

	\begin{theorem}\label{thm:dag}
		For all acyclic digraphs $D\in {\cal D}_{\omega}(w)$, we have ${\rm mac}(D) \geq \frac{w(D)}{4} + \frac{ w(D)^{0.6}}{24}$.
	\end{theorem}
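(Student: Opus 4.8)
The plan is to take an arbitrary acyclic digraph $D\in{\cal D}_\omega(w)$, extract from it a large bipartite subdigraph $R\in\mathcal{B}(D)$, and then invoke Lemma~\ref{lem: induced bipartite digraph} to conclude $\mathrm{mac}(D)\ge \frac{w(D)}{4}+\frac{w(R)}{4}$; it then suffices to produce such an $R$ with $w(R)\ge w(D)^{0.6}/6$ (so that $w(R)/4\ge w(D)^{0.6}/24$). Since every arc has weight at least $1$, we have $m:=|A(D)|\le w(D)$, and conversely $w(D)$ is controlled by $m$ up to the total excess weight; the cleanest route is to first handle the \emph{unweighted} (or integer-multigraph) case, i.e.\ show $h_\mu(m)\ge m/4+\Omega(m^{3/5})$ by finding a bipartite sub-multigraph of size $\Omega(m^{3/5})$ lying in $\mathcal{B}$, and then transfer to the weighted setting. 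Actually, because arc weights are at least one but otherwise arbitrary, the honest statement to prove is directly about $w(D)$, so I would argue: either the number of arcs $m$ is comparable to $w(D)$, in which case a combinatorial construction on the arc set gives an $R\in\mathcal{B}(D)$ of the right size, or $w(D)$ greatly exceeds $m$, in which case some single arc carries weight $\ge w(D)/m$ and we can already get a heavy matching.

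The core combinatorial step is the following: in an acyclic digraph, since there is a topological order $v_1,\dots,v_n$, every arc goes "forward." Partition the vertices into consecutive blocks $B_1,\dots,B_t$ each of size roughly $s$ (a parameter to be optimized, of order $w^{2/5}$ or so). Arcs either stay within a block or jump between blocks. Following the Alon--Bollob\'as--Gy\'af\'as--Lehel--Scott idea, one colour-classes the blocks: assign to odd-indexed blocks the label $X$ and even-indexed blocks the label $Y$; then take $R$ to consist of all arcs going from an $X$-block to the \emph{next} ($Y$-)block, together with, recursively inside each block, a sub-structure handling the within-block and longer-jump arcs. To make $R$ genuinely lie in $\mathcal{B}(D)$ — recall each colour class of each connected component of $R$ must be independent in $D$ — I would instead work with a bipartite subdigraph whose two sides are unions of alternate blocks but restricted to arcs between \emph{consecutive} blocks, so that no two vertices on the same side of a component are joined by an arc. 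A counting/averaging argument over the choice of block boundaries (shift the partition by a random offset) shows that the expected weight of the consecutive-block arcs is $\ge w(D)/s$ minus lower-order terms; choosing $s\asymp w(D)^{2/5}$ and iterating the construction inside blocks (which themselves are acyclic digraphs on $\le s$ vertices with total weight summing to $\le w(D)$) gives, after balancing the recursion, a surviving bipartite structure of weight $\Omega(w(D)^{3/5})$.

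The key quantitative bookkeeping is the recursion: writing $F(w)$ for the guaranteed excess $\mathrm{mac}(D)-w(D)/4$ over all acyclic $D\in{\cal D}_\omega(w)$, the block decomposition yields something like $F(w)\ge c\cdot w/s + (\text{contribution of within-block recursion})$ where the blocks have sizes $n_1,\dots,n_t$ summing to $n$ and weights $w_1,\dots,w_t$ summing to $\le w$; one must handle the convexity of $F$ carefully (Jensen in the wrong direction, so one needs $F$ to be at least linear-plus-concave, which $w/4+\Omega(w^{3/5})$ is). Optimizing $s$ against the recursion depth reproduces the exponent $3/5=1-2/5$, exactly as in \cite{Alon2006}. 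Concretely, setting $s=\lceil w^{2/5}\rceil$, the number of blocks is $\le n/s\le w/s$, the between-consecutive-block arcs contribute weight $\ge w/(2s)-O(w/s^2)\cdot(\text{stuff})$ on average, and a single level already gives $\Omega(w^{3/5})$; with the explicit constants in the theorem ($1/24$ in front of $w^{0.6}$), one picks the shift/offset achieving at least the average and checks $w(R)/4\ge w^{0.6}/24$ for $w$ large, then verifies small $w$ by hand (for $w<$ some constant, $w/4+w^{0.6}/24\le$ the trivial bound $r^+(D)$ or a one-arc matching bound suffices).

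\textbf{Main obstacle.} The hard part will be ensuring the extracted $R$ genuinely satisfies the $\mathcal{B}(D)$ condition — that \emph{both} sides of \emph{each connected component} of $R$ are independent in $D$, not merely that $R$ is bipartite — while simultaneously keeping $w(R)$ as large as $\Omega(w^{3/5})$. In the unweighted ABGLS argument one can afford to be lossy; here, because a dense block may force many within-side arcs, the recursion inside blocks is essential, and proving the recursion closes with the claimed exponent (rather than degrading to $w^{1/2}$) requires the careful convexity argument on $F(w)$ together with a correct handling of how total weight splits among unequal-size blocks. A secondary technicality is the interaction between "$\ge w^{2/5}$ vertices per block" and "$\le m\le w$ arcs": if the digraph is very sparse, blocks may contain almost no arcs and the between-block count is tiny, so one needs the dichotomy (sparse $\Rightarrow$ heavy matching from a single arc, or from Corollary~\ref{lowerBoundMatching}) to cover that regime cleanly.
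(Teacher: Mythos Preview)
Your proposal has a genuine gap, and the paper's route is entirely different.

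\textbf{The gap.} Your central claim---that taking only arcs between \emph{consecutive} blocks yields an $R\in\mathcal{B}(D)$ because ``no two vertices on the same side of a component are joined by an arc''---is false. The $\mathcal{B}(D)$ condition requires that each side of each component of $R$ be independent \emph{in $D$}, not in $R$. If $v_1\in B_1$, $v_2\in B_2$, $v_3\in B_3$ with $v_1v_2,v_2v_3\in A(R)$, then $v_1,v_3$ lie on the same side of their component of $R$; but $D$ may well contain the arc $v_1v_3$, destroying independence. Your recursion inside blocks does nothing to repair this, since the offending arc jumps \emph{across} blocks. You correctly flag this as the ``main obstacle,'' but the sketch you give (offset averaging, recursion, convexity of $F$) does not address it. Indeed, the authors remark in the introduction that they ``could not extend the proof of \cite{Alon2006} to weighted digraphs''---you are attempting precisely the extension they abandoned.

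\textbf{What the paper does instead.} The paper's proof is a short dichotomy on the order $l$ of a longest path in $D$. If $l\ge w(D)^{0.6}$, split the path into two matchings $M_0,M_1$ of total weight $\ge l-1\ge w(D)^{0.6}-1$ (here is where ``every arc has weight $\ge 1$'' is used); Corollary~\ref{lowerBoundMatching} gives $\mathrm{mac}(D)\ge w(D)/4+\max_i w(M_i)/4\ge w(D)/4+(w(D)^{0.6}-1)/8$. If $l<w(D)^{0.6}$, the paper invokes Theorem~\ref{lower} (the bounded-path-order result $c_\nu\ge \tfrac14+k_2\nu^{-2/3}$ with $k_2=1/(8\cdot 3^{2/3})>1/24$), yielding $\mathrm{mac}(D)\ge w(D)/4+k_2 w(D)/l^{2/3}\ge w(D)/4+k_2 w(D)^{0.6}$. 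The exponent $3/5$ arises from balancing $l$ against $w/l^{2/3}$. All the real work is hidden in Theorem~\ref{lower}, which is proved by an entirely different technique: a biased random partition of the topological layers, with the bias varying along the order so that every arc is cut with probability at least $\tfrac14+\Omega(\nu^{-2/3})$. No $\mathcal{B}(D)$-structure larger than a matching is ever constructed.
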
	
	\begin{proof}
		Let $D$ be a weighted acyclic digraph in ${\cal D}_{\omega}(w)$ and let $P=p_1  p_2 \ldots p_l$ be a longest path in $D$.
		If $l \geq w(D)^{0.6}$ then consider the two matchings $M_0 = \{p_1p_2, p_3p_4, p_5p_6, \ldots \}$ and
		$M_1 = \{p_2p_3, p_4p_5, p_6p_7, \ldots \}$. Note that $w(M_0)+w(M_1)=w(P) \geq l-1 \geq w(D)^{0.6}-1$. 
		By Corollary~\ref{lowerBoundMatching} we obtain that ${\rm mac}(D) \geq \frac{w(D)}{4} + \frac{w(M_0)}{4}$ and
		${\rm mac}(D) \geq \frac{w(D)}{4} + \frac{w(M_1)}{4}$, which implies the following when $w(D) \geq 2$. 
		
		\[
		{\rm mac}(D) \geq \frac{w(D)}{4} + \frac{w(M_0)+w(M_1)}{8} \geq \frac{w(D)}{4} + \frac{w(D)^{0.6}-1}{8} \geq \frac{w(D)}{4} + \frac{w(D)^{0.6}}{24}
		.\]
		And if $w(D)<2$ then $|A(D)| \leq 1$ and ${\rm mac}(D) = w(D) \geq \frac{w(D)}{4} + \frac{w(D)^{0.6}}{24}$.
		So, we may assume that $l < w(D)^{0.6}$, which by Theorem~\ref{lower}
		implies\footnote{Note that  the proof of Theorem~\ref{lower} does not use Theorem \ref{thm:dag}, i.e., there is no circular dependency between the two theorems.} the following:
		\[
		{\rm mac}(D) \geq \frac{w(D)}{4} + \frac{k_2 \cdot w(D)}{l^{2/3}} 
		\geq \frac{w(D)}{4} + \frac{k_2 \cdot w(D)}{(w(D)^{0.6})^{2/3}}
		\geq \frac{w(D)}{4} + k_2 \cdot w(D)^{0.6}.
		\]
		Then we are done because $k_2>\frac{1}{24}$.
	\end{proof}
	
		Let $\alpha_0$ be the supremum of $\alpha>0$ such that there exists a  constant $k>0$ so that for all digraphs in the class below the following holds: 
		${\rm mac}(D) \geq \frac{w(D)}{4} + k w(D)^{\alpha}$. We have the following:
	% \multicolumn{4}{|c|}{The smallest known intervals containing $\alpha$} \\ \hline \hline
	% \multicolumn{2}{c}{Multi-column}
	
	\begin{center}
		\begin{tabular}{|c||c|c|c|} \hline
			\multicolumn{4}{|c|}{What is known about $\alpha_0$} \\  \hline \hline
			& & \multicolumn{2}{|c|}{Unweighted} \\
			& Weighted         & Directed  & Simple  \\ 
			& digraphs $D\in {\cal D}_{\omega}(w)$ & multigraphs & digraphs \\ \hline \hline
			Any digraph     & $\alpha_0=0.5$ & $\alpha_0=0.5$ & $\alpha_0=0.5$ \\ \hline
			Acyclic digraph & $\alpha_0 \in [0.6,0.75]$ & $\alpha_0 \in [0.6,0.75]$ & $ \alpha_0 \in [0.6,0.80]$\\ \hline
		\end{tabular}
	\end{center}
	Naturally, we have the following:
	\begin{openProblem} \label{probAC}
		What are the exact values of $\alpha_0$ in the above table?
	\end{openProblem} 	
	\section{Acyclic Digraphs with Bounded Path Length}\label{sec: acyclic digraphs with bounded path length}
	
	In this section we study the following problem.
		
		\begin{openProblem} \label{probACYCpathLenght}
			For each $\nu \ge 1$, determine the supremum $c_\nu$ of $c\ge 0$, such that all weighted acyclic digraphs with maximum path order  $\nu$ satisfy
			${\rm mac}(D) \geq c \cdot  w(D)$.
		\end{openProblem}
	\AZ{  Note that we do not impose constraints of the arc weights in the problem above.
		Note that $c_\nu$ is non-increasing. Indeed, for any digraph $D$ with the longest path order $\nu$ and $\mathrm{mac}(D)=c\cdot w(D)$, we can construct a digraph $D'$ from $D$ by adding a new vertex and a new arc with weight 0 to its longest path such that the longest path of $D'$ has $\nu+1$ vertices. Therefore, $\mathrm{mac}(D')=\mathrm{mac}(D)=c\cdot w(D)=c \cdot w(D')$. This implies $c_{\nu+1}\leq c_\nu$.
      }

	We can compute the exact values of $c_\nu$ when $\nu\leq 11$ (see Appendix). In this section, we focus on bounds for $c_\nu$. Since the number of vertices in the longest path of $D_n$ in Theorem \ref{thm:upperB} is $n$, we can easily obtain the following upper bound by considering $D_\nu$. 
		
		\begin{theorem}\label{thm:upperBr}
			For $\nu\geq 12$, we have $c_\nu \leq \frac{1}{4} + \frac{1}{3\sqrt{\nu}-10}$.
		\end{theorem}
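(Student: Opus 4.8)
The plan is to instantiate the construction from the proof of Theorem \ref{thm:upperB} at $n=\nu$ and bound ${\rm mac}$ directly in terms of the number of arcs of that fixed graph, rather than extending to every arc count $m$. Recall that in Theorem \ref{thm:upperB} the digraph $D_n$ is obtained by taking, on vertices $v_1,\dots,v_n$, the union of the transitive tournaments $T_i$ on the $q=\lfloor\sqrt n\rfloor$ consecutive vertices $\{v_i,\dots,v_{i+q-1}\}$ (indices mod $n$, all arcs forward) and then deleting every arc $v_jv_i$ with $j>i$. The key observations I would reuse verbatim are: $D_n$ is acyclic (all surviving arcs go forward in the order $v_1,\dots,v_n$, so the longest path has at most $n=\nu$ vertices — and in fact exactly $\nu$, which is what we need for the statement), $|A(D_n)| = f(n) = \frac{nq^2}{2} - \frac{nq}{2} - \frac{q^3}{6} + \frac{q}{6}$, and ${\rm mac}(D_n) \le {\rm mac}(D) \le \frac{nq^2}{8}$ where $D=\cup_i A(T_i)$ is the regular multigraph before arc deletion.

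First I would set $n=\nu\ge 12$ and write $m_\nu := |A(D_\nu)| = f(\nu)$. Since $c_\nu$ is defined as a supremum over \emph{all} weighted acyclic digraphs with longest path order $\nu$, and $D_\nu$ (with all weights $1$) is such a digraph, we get the upper bound $c_\nu \le {\rm mac}(D_\nu)/w(D_\nu) = {\rm mac}(D_\nu)/m_\nu$. So it remains to show $\frac{{\rm mac}(D_\nu)}{m_\nu} \le \frac14 + \frac{1}{3\sqrt\nu - 10}$, equivalently ${\rm mac}(D_\nu) \le \frac{m_\nu}{4} + \frac{m_\nu}{3\sqrt\nu - 10}$. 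Using ${\rm mac}(D_\nu)\le \frac{\nu q^2}{8}$ and the exact expression for $m_\nu$, this reduces to the inequality
\[
\frac{\nu q^2}{8} - \frac{m_\nu}{4} \le \frac{m_\nu}{3\sqrt\nu - 10},
\]
and the left side equals $\frac18\bigl(\nu q^2 - 2m_\nu\bigr) = \frac18\bigl(\nu q + \frac{q^3}{3} - \frac{q}{3}\bigr) \le \frac{\nu q}{8} + \frac{q^3}{24}$, exactly the slack term already computed in Theorem \ref{thm:upperB}. So the task is the elementary estimate: show that $\frac{\nu q}{8} + \frac{q^3}{24}$ is at most $\frac{m_\nu}{3\sqrt\nu-10}$, i.e. $m_\nu \ge (3\sqrt\nu-10)\bigl(\frac{\nu q}{8} + \frac{q^3}{24}\bigr)$.

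The bulk of the remaining work is purely computational: substitute $q = \lfloor\sqrt\nu\rfloor$, use $\sqrt\nu - 1 < q \le \sqrt\nu$, and verify the inequality for all $\nu\ge 12$. I would lower-bound $m_\nu = \frac{\nu q^2}{2} - \frac{\nu q}{2} - \frac{q^3}{6} + \frac{q}6$ below using $q \le \sqrt\nu$ in the negative terms and $q > \sqrt\nu - 1$ in the leading term (roughly $m_\nu \gtrsim \frac{\nu^2}{2} - \frac{\nu^{3/2}}{2} - \frac{\nu^{3/2}}{6} + \cdots$), upper-bound $\frac{\nu q}8 + \frac{q^3}{24} \le \frac{\nu^{3/2}}{8} + \frac{\nu^{3/2}}{24} = \frac{\nu^{3/2}}{6}$, and check that $(3\sqrt\nu - 10)\cdot\frac{\nu^{3/2}}{6} = \frac{\nu^2}{2} - \frac{10\nu^{3/2}}{6} \le m_\nu$ holds once $\nu$ is large enough; then handle the finitely many small cases $12 \le \nu \le \nu_0$ by direct substitution (or by noting the bound is weakest precisely at the boundary of the interval $\alpha=\nu/q^2\in[1,2.25]$, as in the parent proof). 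The main obstacle is not conceptual but bookkeeping: getting the floor $\lfloor\sqrt\nu\rfloor$ estimates tight enough that the constant $10$ in $3\sqrt\nu - 10$ actually works for every $\nu\ge 12$ rather than only asymptotically; the choice of $12$ as the threshold mirrors the $n\ge 12$ case analysis already done for Theorem \ref{thm:upperB}, so I would lean on the same split there and just propagate constants. No step should require any tool beyond Lemma \ref{lem:maxg}-style elementary optimization and the facts about $D_n$ already established.
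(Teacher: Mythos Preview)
Your proposal is correct and takes essentially the same approach as the paper: instantiate the construction $D_\nu$ from Theorem~\ref{thm:upperB}, use the same slack estimate ${\rm mac}(D_\nu)\le \frac{|A(D_\nu)|}{4}+\frac{3\nu q+q^3}{24}$, and bound the ratio $\frac{3\nu q+q^3}{24\,|A(D_\nu)|}$ by $\frac{1}{3\sqrt{\nu}-10}$ via the substitutions $\sqrt{\nu}-1\le q\le \sqrt{\nu}$. The only (cosmetic) difference is that the paper carries out the algebra directly on the fraction and obtains the inequality uniformly for all $\nu\ge 12$; your worry about a separate case analysis for small $\nu$ is unnecessary, since after the substitutions the denominator is at least $3\nu^2-10\nu^{3/2}+3\nu+\sqrt{\nu}-1\ge 3\nu^2-10\nu^{3/2}$ for every $\nu\ge 1$, and this is positive precisely when $\nu\ge 12$.
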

\begin{proof}
		For each $\nu\geq 12$, let $D_\nu$ be defined as in Theorem \ref{thm:upperB}. Recall that
		
		\[
		\begin{array}{rcl} \vspace{0.1cm}
			{\rm mac}(D_\nu) &\leq& \frac{|A(D_\nu)|}{4} + \frac{3\nu\lfloor \sqrt{\nu} \rfloor+\lfloor \sqrt{\nu} \rfloor^3}{24}\\\vspace{0.1cm}
			&=& |A(D_\nu)|\left(  \frac{1}{4} +  \frac{3\nu\lfloor \sqrt{\nu} \rfloor+\lfloor \sqrt{\nu} \rfloor^3}{24\left( \frac{\nu\lfloor \sqrt{\nu} \rfloor^2}{2} - \frac{\nu\lfloor \sqrt{\nu} \rfloor}{2} - \frac{\lfloor \sqrt{\nu} \rfloor^3}{6} + \frac{\lfloor \sqrt{\nu} \rfloor}{6} \right)} \right). \\ 
		\end{array}
		\]
		By using $\sqrt{\nu}-1 \leq \lfloor \sqrt{\nu}\rfloor\leq \sqrt{\nu}$, we have
		\[
		\begin{array}{rcl} \vspace{0.1cm}
			{\rm mac}(D_\nu) &\leq & |A(D_\nu)|\left(  \frac{1}{4} +  \frac{\nu^{{3/2}}}{3\nu(\sqrt{\nu}-1)^2 - 3\nu^{{3/2}}- \nu^{{3/2}} + \sqrt{\nu}-1} \right)\\ \vspace{0.1cm}
			& \leq & |A(D_\nu)|\left(  \frac{1}{4} +  \frac{\nu^{{3/2}}}{3\nu^2 - 10\nu^{{3/2}} } \right)\\ \vspace{0.1cm}
			& = & |A(D_\nu)|\left(  \frac{1}{4} +  \frac{1}{3\sqrt{\nu} - 10} \right).
		\end{array}
		\]
		Therefore, $c_{\nu}\leq   \frac{1}{4} +  \frac{1}{3\sqrt{\nu} - 10} $, which completes the proof.
	\end{proof}

		The main goal of this section is to prove a lower bound for $c_\nu$. The main idea of how we obtain the lower bound for all $\nu$ is roughly the following: we first prove (in Theorem \ref{bigRiii}) that $c_\nu= \frac{1}{4}+\Omega(\nu^{-2/3})$ for every $\nu \in N^*$,
		where $N^*=\{n_i^*:\ i\in \mathbb{N}, i\ge 2\}$ (the elements of $N^*$ will be defined later).
Thus, for any $\nu\ge n_2^*$, we may assume $n_{k-1}^*\leq \nu< n_{k}^*$ for some $k$. And we will show that the gap between these two numbers is not too large compared to $\nu$ (namely, $n_{k}^*-n_{k-1}^*=O(\nu^{2/3})$). As $c_\nu$ is non-increasing, we conclude that $c_\nu\geq c_{n^*_k}= \frac{1}{4}+\Omega({n^*_k}^{-2/3})=\frac{1}{4}+\Omega({(\nu+O(\nu^{2/3}))}^{-2/3})=\frac{1}{4}+\Omega({\nu}^{-2/3})$. For each integer $k\geq 2$, let
	\begin{align*}
		n_k^* & =   2k + 2\sum_{i=1}^{\lfloor \sqrt{k/2} \rfloor} \left(2\left\lfloor \frac{2k-2i^2-i}{2} \right\rfloor+i \right) \\ 
		& =   2k + 4k \lfloor \sqrt{k/2} \rfloor - 2\left\lfloor \frac{\sqrt{k/2}+1}{2} \right\rfloor - 4\sum_{i=1}^{\lfloor \sqrt{k/2} \rfloor} i^2 \\
		& =   2k + 4k \lfloor \sqrt{k/2} \rfloor - 2\left\lfloor \frac{\sqrt{k/2}+1}{2} \right\rfloor - \frac{2}{3} \lfloor \sqrt{k/2} \rfloor (\lfloor \sqrt{k/2} \rfloor+1)(2\lfloor \sqrt{k/2} \rfloor+1) . \\
	\end{align*}

	\begin{theorem} \label{bigRiii}
		For any integer $k\geq 7$, we have $c_{n_k^*} \geq \frac{1}{4}+\frac{1}{8{n_k^*}^{2/3}-4}$.
	\end{theorem}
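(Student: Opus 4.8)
The plan is to prove this by the probabilistic method. Given an arbitrary weighted acyclic digraph $D$ whose longest path has at most $n_k^*$ vertices, I would construct a distribution over bipartitions $(X,Y)$ of $V(D)$ with $\mathbb{E}[w(X,Y)]\ge\bigl(\tfrac14+\tfrac{1}{8{n_k^*}^{2/3}-4}\bigr)w(D)$; since $\mathrm{mac}(D)\ge\mathbb{E}[w(X,Y)]$, this is all that is required. Write $\nu=n_k^*$ (if the longest path is shorter, pad a longest path with new vertices and zero-weight arcs as in the monotonicity remark preceding the statement, changing neither $w(D)$ nor $\mathrm{mac}(D)$), and give each $v\in V(D)$ its topological level $\ell(v)\in\{1,\dots,\nu\}$, the number of vertices on a longest path ending at $v$; then every level is an independent set and every arc runs from a strictly smaller level to a strictly larger one. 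A bipartition depending only on levels is encoded by a map $\sigma\colon\{1,\dots,\nu\}\to\{0,1\}$, with $v\in X$ exactly when $\sigma(\ell(v))=1$, so by linearity of expectation it suffices to produce a distribution on such maps with $\Pr[\sigma(a)=1,\ \sigma(b)=0]\ge\tfrac14+\tfrac{1}{8\nu^{2/3}-4}$ for all levels $a<b$. (Merely exhibiting a large $R\in\mathcal B(D)$ and applying Lemma~\ref{lem: induced bipartite digraph} appears too weak here, as the extremal digraphs of Theorem~\ref{thm:upperB} suggest, so a genuinely randomized partition seems unavoidable.)

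The choice of distribution is what fixes the exact value of $n_k^*$. Set $m=\lfloor\sqrt{k/2}\rfloor$ and cut $\{1,\dots,\nu\}$ into consecutive blocks: a central block of size about $2k$, and, symmetrically on either side, blocks of sizes $a_1>a_2>\dots>a_m$ with $a_i$ equal to $2k-2i^2$ up to the floor adjustment appearing in the definition of $n_k^*$; by construction the block sizes add up to exactly $n_k^*$, which is why that quantity has the stated piecewise summation form. To sample $\sigma$, first choose a scale $i\in\{0,1,\dots,m\}$ according to suitable probabilities $p_i$: $i=0$ is a base strategy (for instance an independent fair coin at every level) that captures every pair with probability exactly $\tfrac14$, while for $i\ge1$ one colours $\{1,\dots,\nu\}$ by the near-balanced two-block pattern $1^{\lceil a_i/2\rceil}0^{\lfloor a_i/2\rfloor}$ of period $a_i$ with a uniformly random shift, mirrored about the centre to respect the symmetry of the blocks. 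A pair of levels at distance $g$ is then captured, at scale $i\ge1$, with probability $c_i(g)$ close to $\mathrm{dist}(g,a_i\mathbb{Z})/a_i$, and at scale $0$ with probability $c_0(g)=\tfrac14$, so the overall capture probability of such a pair is $\sum_{i=0}^m p_i\,c_i(g)$.

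The crux, which I expect to be the main obstacle, is to verify $\sum_{i=0}^m p_i\,c_i(g)\ge\tfrac14+\tfrac{1}{8\nu^{2/3}-4}$ for every $g\in\{1,\dots,\nu-1\}$. I would build this on three facts: (i) each $c_i$ averages to exactly $\tfrac14$ over a full period, so any gain must be extracted gap by gap; (ii) $c_i(g)\ge\tfrac14$ unless $g$ lies within $a_i/4$ of a multiple of $a_i$, and where it exceeds $\tfrac14$ it does so by a definite amount on average; (iii) since $\nu=\Theta(k^{3/2})$ while the $m=\Theta(\sqrt k)$ scales $a_i$ are spread through $[k,2k]$ with consecutive gaps of order $i$, no $g\le\nu$ can be ``resonant'' (close to a multiple of $a_i$) for too many of the scales simultaneously — indeed strictly fewer than half, once the small-$g$ regime is absorbed by the base strategy together with the finest scales. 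The argument then splits into the ranges $g=O(k)$, $g$ a small multiple of $k$, and larger $g$, and in each range one shows the worst $g$ still beats $\tfrac14$ by at least $\tfrac{1}{8\nu^{2/3}-4}$; the exact $a_i$ (hence the $-2i^2-i$ terms and the floors), the number of scales $m=\lfloor\sqrt{k/2}\rfloor$, and the probabilities $p_i$ are chosen precisely so that this holds with that slack — which is what determines both the form of $n_k^*$ and the constants $8$ and $4$. This final step is in essence a one-dimensional discrepancy estimate for the family $\{a_i\}$ together with a delicate optimisation, and is where the real difficulty lies; the leftover cases (when $w(D)$ is so small that $\mathrm{mac}(D)=w(D)$ already suffices, and verifying that $k\ge7$ makes every range nonempty and the estimates valid) are routine.
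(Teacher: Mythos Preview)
Your plan diverges substantially from the paper's argument, and the step you yourself flag as ``the main obstacle'' --- showing $\sum_i p_i c_i(g)\ge\tfrac14+\tfrac{1}{8\nu^{2/3}-4}$ uniformly in $g$ --- is never carried out. With periodic block patterns under a random shift one has $c_i(g)=\mathrm{dist}(g,a_i\mathbb Z)/a_i$, which \emph{vanishes} at the resonances $g\in a_i\mathbb Z$; to beat $\tfrac14$ for every single $g$ you would need a delicate anti-resonance property of the family $\{a_i\}$ together with carefully tuned weights $(p_i)$, and nothing in the arithmetic of $n_k^*$ is visibly engineered for that. So as written this is a strategy with its heart missing, and it is not clear it can be completed with the stated constants.

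The paper's proof is both simpler and conceptually different: it uses a \emph{single} random partition, not a mixture of scales. As you do, it reduces to levels and cuts $\{1,\dots,n_k^*\}$ into consecutive blocks $A_{-z},\dots,A_{-1},A_0,A_1,\dots,A_z$ with $z=\lfloor\sqrt{k/2}\rfloor$, $|A_{\pm q}|=2f(q)+q$ for $f(q)=\lfloor(2k-2q^2-q)/2\rfloor$ and $|A_0|=2k$. But within each block it then draws a \emph{uniformly random subset of a prescribed size}: in $A_q$ with $q\le 0$ exactly $f(|q|)+|q|$ levels go to $X$, while in $A_q$ with $q\ge 1$ exactly $f(q)$ levels go to $X$. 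Two short computations finish the proof. First (Claim~A), for two levels in the same block $A_q$ the capture probability is $\frac{(f(q)+q)\,f(q)}{(2f(q)+q)(2f(q)+q-1)}$, and an elementary manipulation shows this is $\ge\frac{k}{4k-2}$; the block size was chosen exactly so that this inequality reduces to $2q^4\le kq^2+q^2/2$, which holds for $q\le\sqrt{k/2}$. Second (Claim~C), because $\Pr[v\in X]$ is \emph{strictly decreasing} in the block index, any arc crossing from block $s$ to block $t>s$ is captured with probability at least that of an arc inside $A_t$ (or $A_s$), hence again $\ge\frac{k}{4k-2}$. Finally (Claim~B) $n_k^*\ge k^{3/2}$ gives $\frac{k}{4k-2}\ge\tfrac14+\tfrac{1}{8(n_k^*)^{2/3}-4}$.

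The idea you are missing is precisely this monotone bias: load slightly more $X$-mass into left blocks and slightly more $Y$-mass into right blocks, with the imbalance $|q|$ calibrated to the block size $2f(q)+q$. That replaces your uniform-in-$g$ discrepancy problem by a single hypergeometric within-block calculation plus a monotonicity argument for between-block arcs, and is exactly what the peculiar form of $n_k^*$ encodes.
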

	\begin{proof}
		For any positive integer $k\geq 7$ and $0 \leq q \leq z=\lfloor \sqrt{k/2} \rfloor$,
		let $f(q)= \left\lfloor \frac{2k-2q^2-q}{2} \right\rfloor$. Let $S_q$ be any set of size $2f(q) + q$. 
		
		\2
		
		\noindent{\bf Claim A:} Let $(X,Y)$ be a random partition of $S_q$, with $|X|=f(q)+q$ and $|Y|=f(q)$. 
		For any distinct $s_1,s_2 \in S_q$, the probability that $s_1 \in X$ and $s_2 \in Y$ is at least $\frac{k}{4k-2}$.
		
		\2

		{\bf Proof of Claim A:} The probability that $s_1 \in X$ is $\frac{f(q)+q}{2f(q)+q}$. 
		And given that $s_1 \in X$, the probability that $s_2 \in Y$ is $\frac{f(q)}{2f(q)+q-1}$. 
		So the probability that $s_1 \in X$ and $s_2 \in Y$ is the following:
		
		\[
		\mathbb{P}(s_1 \in X, \;  s_2 \in Y) = \frac{f(q)+q}{2f(q)+q} \times \frac{f(q)}{2f(q)+q-1}.
		\]
		
		We first consider the case when $q$ is even in which case $f(q)= k-q^2-\frac{q}{2}$.
		In this case, $\mathbb{P}(s_1 \in X,  \; s_2 \in Y) \geq \frac{k}{4k-2}$ is equivalent to the 
		following inequality.
		
		\[
		\frac{(k-q^2+q/2)(k-q^2-q/2)}{(2k-2q^2)(2k-2q^2-1)} \geq \frac{k}{4k-2}.
		\]
		
		This is equivalent to the following:
		
		\[
		(4k-2) \left[(k-q^2)^2 - \frac{q^2}{4}\right]  \geq 4k(k-q^2) \left(k-q^2- \frac{1}{2} \right).
		\]
		
		Subtracting $4k(k-q^2)^2$ from both sides gives us the following equivalent inequality.
		
		\[
		(-2) \cdot \left[(k-q^2)^2 - \frac{q^2}{4}\right] - kq^2  \geq -2k(k-q^2) .
		\]
		
		%Moving everything to the right and multiplying out the brackets gives us the following equivalent inequality.
		Thus,
		
		\[
		0 \geq 2k^2 -4kq^2 + 2q^4 - \frac{q^2}{2} + kq^2 -2k(k-q^2) .
		\]
		
		We note that this is equivalent to $0 \geq 2q^4 - kq^2 - \frac{q^2}{2}$.  We recall that $q \leq \lfloor \sqrt{k/2} \rfloor$, which implies that 
		$kq^2 \geq 2q^4$, which in turn implies that $0 \geq 2q^4 - kq^2 - \frac{q^2}{2}$ holds.  Therefore,
		$\mathbb{P}(s_1 \in X, \; s_2 \in Y) \geq \frac{k}{4k-2}$ also holds.
		
		We now consider the case when $q$ is odd, in which case $f(q)= k-q^2-\frac{q}{2}-0.5$.
		Let $k^* = k - 1/2$ and note that the following holds. 
		
		\begin{align*}
			\mathbb{P}(s_1 \in X, \;  s_2 \in Y) & =  \frac{f(q)+q}{2f(q)+q} \times \frac{f(q)}{2f(q)+q-1} \\ 
						& =  \frac{(k-q^2+q/2-0.5)(2k-q^2-q/2-0.5)}{(2k-2q^2-1)(2k-2q^2-1-1)} \\
			& =  \frac{(k^*-q^2+q/2)(k^*-q^2-q/2)}{(2k^*-2q^2)(2k^*-2q^2-1)} .\\
		\end{align*}
		
		Using the same computations as above (but with $k^*$ instead of $k$) we note that $\mathbb{P}(s_1 \in X, \;  s_2 \in Y) \geq
		\frac{k^*}{4k^*-2}$ is equivalent with $0 \geq 2q^4 - k^*q^2 - \frac{q^2}{2}$.
		As $kq^2 \geq 2q^4$ we have $2q^4 \leq k q^2 = k^* q^2 + q^2/2$, which implies that $0 \geq 2q^4 - k^*q^2 - \frac{q^2}{2}$ holds.
		Therefore the following holds.
		
		\[
		\mathbb{P}(s_1 \in X, \;  s_2 \in Y) \geq \frac{k^*}{4k^*-2} = \frac{k-1/2}{4k-4} \geq \frac{k}{4k-2}.
		\]
		
		This completes the proof of Claim~A. \smallQED{}
		
		\2
		
		Let $n_q = 2f(q)+q$ and
		let $D$ be any acyclic digraph whose longest path has order $n_k^*$. Recall that $n^*_k$ is
		
		\[
		n_k^* = 2k + 2\sum_{i=1}^{\lfloor \sqrt{k/2} \rfloor} (2f(i)+i) = 2k + 2\sum_{i=1}^{\lfloor \sqrt{k/2} \rfloor} n_i.
		\]
		
		\noindent{\bf Claim B:} $n_k^* \geq k^{3/2}$.
		
		\2

		{\bf Proof of Claim B:} Let $z=\lfloor \sqrt{k/2} \rfloor$. As $\frac{5\sqrt{2}}{3} - 3k^{-1/2} - \frac{5\sqrt{2}}{6}k^{-1} - k^{-3/2}\geq 1$ for all $k \geq 7$ the following holds.

		\begin{align*}
			n_k^* & =  2k + 2\sum_{i=1}^z (2f(i)+i) \\ 
			& =  2k  + 2\sum_{i=1}^z \left(  2  \left\lfloor \frac{2k-2i^2-i}{2} \right\rfloor + i \right) \\ 
			& =  2k - 2\left\lfloor \frac{z+1}{2} \right\rfloor + 2\sum_{i=1}^z \left(  2 \left( \frac{2k-2i^2-i}{2} \right) + i \right) \\ 
			& =  2k - 2\left\lfloor \frac{z+1}{2} \right\rfloor + 4kz - 4 \sum_{i=1}^z i^2 \\ 
			%& = & 2k + 4kz - 4 \times \frac{z(z+1)(2z+1)}{6} \\ \vspace{0.1cm}
			& =  2k - 2\left\lfloor \frac{z+1}{2} \right\rfloor  + 4kz - 4 \times \frac{2z^3+3z^2+z}{6} \\ 
			& =  2k - 2\left\lfloor \frac{z+1}{2} \right\rfloor  + 4kz - \frac{4z^3+6z^2+2z}{3} .
		\end{align*}
		By using $\sqrt{x}-1\leq \lfloor \sqrt{x} \rfloor\leq \sqrt{x}$, we have
		\begin{align*}
			n_k^* & \geq  2k - 2\left(\frac{(k/2)^{1/2}+1}{2}\right)  + 4k((k/2)^{1/2}-1) - \frac{\sqrt{2}k^{3/2}+3k+\sqrt{2}k^{1/2}}{3} \\ \vspace{0.1cm}
			& \geq  \frac{5\sqrt{2}}{3}k^{3/2} - 3k - ((k/2)^{1/2}+1) -  \frac{\sqrt{2}k^{1/2}}{3} \\ 
			& =  \frac{5\sqrt{2}}{3}k^{3/2} - 3k - \frac{5\sqrt{2}}{6}k^{1/2} - 1 \\
			& = 
			k^{3/2}\left(\frac{5\sqrt{2}}{3} - 3k^{-1/2} - \frac{5\sqrt{2}}{6}k^{-1} - k^{-3/2}\right)  \\
			&\geq  k^{3/2}. \\
		\end{align*}
		
		This completes the proof of Claim~B. \smallQED{}
		
		\2 
		%\AZ{Let D be a weigthed acyclic digraph with maximum path order $\nu$.}
		Let $T_1$ contain all sources in $D$ and let
		$T_2$ contain all sources in $D-T_1$. 
		Continuing in this way we obtain the sets $T_1,T_2,\ldots,T_{n_k^*}$. 
		Contract each $T_i$ into the vertex $v_i$ and let $D'$ denote the resulting digraph.
		Note that $D'$ is acyclic and 
		$(v_1,v_2,\ldots,v_{n_k^*})$ is an acyclic ordering of $D'$ (i.e. if $v_i v_j \in A(D')$ then $i<j$).
		The weight of an arc $v_i v_j$ is just the sum of the weights of all arcs from $T_i$ to $T_j$ in $D'$.
		
		Let $A_{-z}$ denote the first $n_z$ vertices in the ordering. Thus, $$A_{-z}=\{v_1,v_2,\ldots,v_{n_z}\}.$$
		Let $A_{-(z-1)}$ denote the following $n_{z-1}$ vertices and we continue this process until $A_{-1}$ is defined.
		%Thus, 
		%$$A_{-(z-1)}=\{v_{n_z+1},v_{n_z+2},\ldots,v_{n_z+n_{z-1}}\}.$$
		%Let $A_{-(z-2)}$ denote the following $n_{z-2}$ vertices. 
		Now let $A_0$ denote the following $2k$ vertices in the ordering.
		Let $A_{1}$ denote the following $n_1$ vertices and 
		let $A_{2}$ denote the following $n_2$ vertices.
		Continue this process until $A_z$ is defined and note that $A_z$ defines the last $n_z$ vertices in the acyclic ordering
		(i.e. $A_z=\{v_{n_k^*-n_z+1},v_{n_k^*-n_z+2},\ldots,v_{n_k^*}\}$).
		
		We now produce a "random" solution as follows. Pick a partition $(X_i,Y_i)$ of $A_i$ at random such that
		$|X_i|=f(|i|)+|i|$ and $|Y_i|=f(|i|)$ for each $-z \leq i \leq 0$ and
		pick a partition $(X_i,Y_i)$ of $A_i$ at random such that
		$|X_i|=f(i)$ and $|Y_i|=f(i)+i$ for each $1 \leq i \leq z$.
		Let $X^* = \cup_{i=-z}^z X_i$ and $Y^* = \cup_{i=-z}^z Y_i$
		
		We will in Claim~C show that every arc of $D'$ has probability at least $k/(4k-2)$ of belonging to the dicut $(X^*,Y^*)$.
		
		\2
		
		\noindent{\bf Claim C:} $\mathbb{E} (w(X^*,Y^*)) \geq w(D') \times \frac{k}{4k-2}$.
		
		\2
		
		{\bf Proof of Claim C:}  Let $uv \in A(D')$ be an arbitrary arc. 
		If $uv \in A_i$ for some $i$ then $uv$ belongs to the dicut $(X^*,Y^*)$ with probability 
		at least $\frac{k}{4k-2}$ by Claim~A. So assume that $u \in A_s$ and $v \in A_t$ where $s<t$.

		Let $w_i \in A_i$ be arbitrary and note that the probability that $w_i \in X^*$ is the following.
		
		\[
		\mathbb{P}(w_i \in X^*) = \left\{ \begin{array}{lcl} \vspace{0.12cm}
			\frac{f(|i|)+|i|}{2f(|i|)+|i|}=\frac{1}{2}+\frac{1}{4f(|i|)/|i|+2} & \hspace{1cm} & \mbox{if $i < 0$}, \\
			\frac{f(i)}{2f(i)+i}= \frac{1}{2}-\frac{1}{4f(i)/i+2} & \hspace{1cm} & \mbox{if $i \geq 0$}. \\
		\end{array} \right.
		\]
		When $i\geq 0$, since $f(i)$ is decreasing, $f(i)/i$ is clearly decreasing and therefore $\mathbb{P}(w_i \in X^*)$ is decreasing. When $i<0$, $f(|i|)/|i|=f(-i)/-i$ is increasing which implies $\mathbb{P}(w_i \in X^*)$ is also decreasing. These together with the fact that $\mathbb{P}(w_i \in X^*)> 1/2$ when $i< 0$ and $\mathbb{P}(w_i \in X^*)\leq 1/2$ when $i\geq 0$ imply that
		$\mathbb{P}(w_i \in X^*) > \mathbb{P}(w_j \in X^*)$ if and only if $i<j$.
		Analogously, $\mathbb{P}(w_i \in Y^*) < \mathbb{P}(w_j \in Y^*)$ if and only if $i<j$.
		
		We now consider the following two cases, which exhaust all possibilities, as $s<t$.
		
		\2
		
		\noindent{\bf Case C.1. $t>0$:}  Note that $\mathbb{P}(u \in X^*) \geq \mathbb{P}(w_{t-1} \in X^*)$, as $s \leq t-1$.
		Therefore the following holds (where we consider the case when $v \in Y^*$ is given), where $q_t$ is an arbitrary vertex in $A_t \setminus v$.
		
		\begin{align*}
			\mathbb{P}(u \in X^* \; | \; v \in Y^*) & =  \mathbb{P}(u \in X^*) \\
			& \geq  \mathbb{P}(w_{t-1} \in X^*) \\  
			& =  \frac{f(t-1)}{2f(t-1)+(t-1)}. \\
		\end{align*}
		
		As $f(t-1) \geq f(t)$ we note that $\frac{f(t-1)}{2f(t-1)+(t-1)} \geq \frac{f(t)}{2f(t)+(t-1)}$, which implies the following 
		(by the above).
		
		\[
		\mathbb{P}(u \in X^* \; | \; v \in Y^*)  \geq  \frac{f(t)}{2f(t)+(t-1)} 
		=  \mathbb{P}(q_t \in X^* \; | \; v \in Y^*).
		\]
		
		Therefore the probability of $uv$ belonging to the partition $(X^*,Y^*)$ is at least as great as the probability 
		that $q_t v$ belonging to the partition $(X^*,Y^*)$. Therefore, as Claim~A implies that 
		$\mathbb{P}(q_t v \in (X^*,Y^*)) \geq \frac{k}{4k-2}$, we have completed Case~C.1.

		\2

		\noindent{\bf Case C.2. $s<0$:} This case can be proved analogously to Case~C.1, by letting $q_s \in A_s \setminus \{u\}$ be arbitrary 
		and showing the following.
		
		\[
		\mathbb{P}(v \in Y^* \; | \; u \in X^*) \geq \mathbb{P}(q_s \in Y^* \; | \; u \in X^*).
		\]
		
		And then using Claim~A to show that $\mathbb{P}(u q_s \in (X^*,Y^*)) \geq \frac{k}{4k-2}$, 
		which completes Case~C.2.\smallQED{}
		
		\2

		From Claim~B we have $n_k^* \geq k^{3/2}$, which implies that $k \leq {n_k^*}^{2/3}$.
		By Claim~C, 
		
		\[
		\mathbb{E} (w(X^*,Y^*)) \geq w(D') \times \frac{k}{4k-2} \geq  w(D) \times \frac{{n_k^*}^{2/3}}{4{n_k^*}^{2/3}-2},
		\]	
		which completes the proof. \end{proof}
	\begin{theorem}\label{lower}
		Let $k_2=\frac{1}{8\times 3^{2/3}}$.	Then for any $\nu\geq 1$, we have $c_\nu \geq \frac{1}{4}+k_2\nu^{-2/3}$.
	\end{theorem}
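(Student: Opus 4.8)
The plan is to derive the bound for general $\nu$ from Theorem~\ref{bigRiii}, which already establishes it at the values $\nu=n_k^*$, together with the monotonicity of $c_\nu$ recorded just after Open Problem~\ref{probACYCpathLenght}. Two things must be supplied: first, control on how fast the sequence $(n_k^*)_{k\ge 7}$ grows, so that every large $\nu$ sits just below some $n_k^*$ with $n_k^*<3\nu$; and second, a direct argument for small $\nu$, where the values $n_k^*$ are too sparse to be useful. I would split the argument at $\nu=36=n_7^*$.

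For $1\le\nu\le 36$ I would apply Theorem~\ref{cut(r)}. If $\nu$ is odd it gives $c_\nu\ge\frac14+\frac{1}{4\nu}$; if $\nu\ge 2$ is even it gives $c_\nu\ge\frac14+\frac{1}{4(\nu-1)}\ge\frac14+\frac{1}{4\nu}$; and for $\nu=1$ every admissible digraph has no arcs, so the inequality is vacuous. Since $\nu\le 36<72=(2\cdot 3^{2/3})^3$ we have $\nu^{1/3}<2\cdot 3^{2/3}$, which is equivalent to $\frac{1}{4\nu}>\frac{1}{8\cdot 3^{2/3}}\nu^{-2/3}=k_2\nu^{-2/3}$, and the claim follows.

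For $\nu\ge 37$ I would let $k\ge 7$ be the least integer with $n_k^*\ge\nu$; it exists because $n_k^*\ge k^{3/2}\to\infty$ (the inequality $n_k^*\ge k^{3/2}$ being Claim~B in the proof of Theorem~\ref{bigRiii}). Since $n_7^*=36<\nu$ we have $k\ge 8$, so by minimality $n_{k-1}^*<\nu$. The decisive step is the claim that $n_k^*\le 3\,n_{k-1}^*$ for every $k\ge 8$. Granting it, $\nu\le n_k^*\le 3n_{k-1}^*<3\nu$, and since $c_\nu$ is non-increasing and $k\ge 7$, Theorem~\ref{bigRiii} yields
\[
c_\nu\ \ge\ c_{n_k^*}\ \ge\ \frac14+\frac{1}{8(n_k^*)^{2/3}-4}\ >\ \frac14+\frac{1}{8(3\nu)^{2/3}}\ =\ \frac14+k_2\nu^{-2/3},
\]
which is the desired bound. (Using $3\nu$ here is deliberately crude; a sharper gap bound would even give a better constant, but this is all that is needed to match $k_2=\frac{1}{8\cdot 3^{2/3}}$.)

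What remains, and what I expect to be the real obstacle, is the claim $n_k^*\le 3\,n_{k-1}^*$ for $k\ge 8$. Write $z=\lfloor\sqrt{k/2}\rfloor$. Since $\lfloor x\rfloor-\lfloor x-1\rfloor=1$, each summand $2\lfloor(2k-2i^2-i)/2\rfloor+i$ decreases by exactly $2$ when $k$ is replaced by $k-1$, so: if $\lfloor\sqrt{(k-1)/2}\rfloor=z$ then $n_k^*-n_{k-1}^*=4z+2$; and if $\lfloor\sqrt{(k-1)/2}\rfloor=z-1$ then necessarily $k=2z^2$, and $n_k^*-n_{k-1}^*=2\bigl(2\lfloor(2k-2z^2-z)/2\rfloor+z\bigr)+4(z-1)+2\le 2k+4z-2$, because $2\lfloor(2k-2z^2-z)/2\rfloor+z\le 2k-2z^2=k$. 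In both cases $n_k^*-n_{k-1}^*\le 2k+4z\le 2(k+\sqrt{2k})$. On the other hand $n_{k-1}^*\ge(k-1)^{3/2}$ by Claim~B, and a direct estimate gives $(k-1)^{3/2}\ge k+\sqrt{2k}$ for all $k\ge 8$; combining, $n_k^*-n_{k-1}^*\le 2n_{k-1}^*$, i.e.\ $n_k^*\le 3n_{k-1}^*$. The only points needing care are the dichotomy on whether $\lfloor\sqrt{k/2}\rfloor$ drops by one (and the identity $k=2z^2$ forced in that case) and the monotone comparison $(k-1)^{3/2}\ge k+\sqrt{2k}$; everything else is routine substitution.
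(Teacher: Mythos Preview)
Your proof is correct and follows the same overall architecture as the paper's: handle small $\nu$ via Theorem~\ref{cut(r)}, and for large $\nu$ sandwich it between consecutive $n_{k-1}^*$ and $n_k^*$, bound the gap, invoke Claim~B and Theorem~\ref{bigRiii}, and use that $c_\nu$ is non-increasing.

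The only real difference is in how the gap is packaged. The paper bounds $n_k^*-n_{k-1}^*\le 6k$, converts $k\le \nu^{2/3}+1$ via Claim~B, and then must verify the slightly delicate inequality $8(\nu+6\nu^{2/3}+6)^{2/3}-4\le 8\cdot 3^{2/3}\nu^{2/3}$ for $\nu\ge 36$. You instead prove the multiplicative bound $n_k^*\le 3n_{k-1}^*<3\nu$ directly (via $n_k^*-n_{k-1}^*\le 2(k+\sqrt{2k})\le 2(k-1)^{3/2}\le 2n_{k-1}^*$), which makes the final step
\[
\frac{1}{8(n_k^*)^{2/3}-4}>\frac{1}{8(3\nu)^{2/3}}=k_2\nu^{-2/3}
\]
immediate. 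This is a cleaner and more transparent way to extract exactly the constant $k_2=\frac{1}{8\cdot 3^{2/3}}$; the paper's route requires a calculation that only barely works at $\nu=36$. Your case analysis on whether $\lfloor\sqrt{k/2}\rfloor$ drops, the identification $k=2z^2$ in that case, and the check $(k-1)^{3/2}\ge k+\sqrt{2k}$ for $k\ge 8$ are all correct.
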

	\begin{proof}
		Note that when $\nu<36$ we are done by the fact that $\frac{1}{4}+\frac{1}{4\nu}> \frac{1}{4}+k_2\nu^{-2/3}$ and Theorem \ref{cut(r)}. So we assume $\nu\geq n^*_{7}=36$.
		Suppose without loss of generality that $n^*_{k-1}\leq\nu\leq n^*_k$. Recall the definition of $n_k^*$:
		\begin{align*}
			n_k^* & =   2k + 2\sum_{i=1}^{\lfloor \sqrt{k/2} \rfloor} \left(2\left\lfloor \frac{2k-2i^2-i}{2} \right\rfloor+i \right) \\ 
			& =   2k + 4k \lfloor \sqrt{k/2} \rfloor - 2\left\lfloor \frac{\sqrt{k/2}+1}{2} \right\rfloor - 4\sum_{i=1}^{\lfloor \sqrt{k/2} \rfloor} i^2 
		\end{align*}
		
		For $k\geq 7$, we have 
		\begin{align*}
			n_k^*-n_{k-1}^*&\leq 2+4k(\lfloor\sqrt{k/2}\rfloor-\lfloor\sqrt{(k-1)/2}\rfloor)+4\lfloor\sqrt{(k-1)/2}\rfloor\\
			&\leq 2+4k+2\sqrt{2(k-1)}\\
			&\leq 6k.
		\end{align*}

		The last inequalty follows from the fact that $k\geq 7$. By Claim B in Theorem \ref{bigRiii}, $\nu\geq n^*_{k-1}\geq (k-1)^{3/2}$, $k\leq \nu^{2/3}+1$. Thus,
		$n_k^*-\nu\leq n_k^*-n_{k-1}^*$ and $n_k^*\leq n_k^*-n^*_{k-1}+\nu\leq \nu+6\nu^{2/3}+6$. Since $c_\nu$ is \AZ{non-increasing}, we have
		\begin{align*}
			c_\nu &\geq c_{n^*_k} \\
			&\geq  \frac{1}{4}+\frac{1}{8{n_k^*}^{2/3}-4}\\
			&\geq  \frac{1}{4}+\frac{1}{8(\nu+6+6\nu^{2/3})^{2/3}-4}\\
			&\geq  \frac{1}{4}+\frac{1}{8\times 3^{2/3}\nu^{2/3}}\\
			&=  \frac{1}{4}+k_2\nu^{-2/3},
		\end{align*}
				where the last inequality follows from the fact that $\nu\geq 36$. This completes the proof.
	\end{proof}

	We complete this section with the following:	

		\begin{openProblem}
			It'd be interesting to identify the  infimum $\alpha_U$ of $\alpha>0$ such that
			\(c_\nu= 1/4 + O(\nu^\alpha)\) and the supremum $\alpha_L$ of $\alpha>0$ such that
			\(c_\nu= 1/4 + \Omega(\nu^\alpha).\) Is $\alpha_U=\alpha_L$ ?
		\end{openProblem}
		We already know from the upper bound and the lower bound that $-2/3\le \alpha_L\le \alpha_U\le -1/2$.

	\section{Generalization of Theorem \ref{thm:dag}}\label{Generalization of Theorem}
\AZ{In this section, we generalize the lower bound for weighted acyclic digraphs in Theorem \ref{thm:dag} to weighted digraphs in ${\cal D}_{\omega}(w)$ with the length of every circle bounded by a constant. Our proof will use the below theorem of Bondy.} Let $\circ(D)$ denote the {\em circumference} of a digraph $D$ i.e. the \AZ{length of a longest  cycle in $D$. The {\em chromatic number} $\chi(D)$ of a digraph $D$ is the chromatic number of the underlying graph of $D.$
}

\begin{theorem} \label{ThmBondy} \cite{Bondy}
	For all strong digraphs $D$ we have $\chi(D) \leq \circ(D)$.
\end{theorem}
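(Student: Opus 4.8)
By definition $\chi(D)=\chi(UG(D))$, so the claim asks for a proper colouring of the underlying graph of $D$ with at most $c:=\circ(D)$ colours. I would first dispose of the degenerate cases. A strong digraph on one vertex is trivial, and a strong digraph on at least two vertices contains a cycle, so $c\ge 2$. If $c=2$, then the reverse of every arc is again an arc (otherwise a shortest return path together with the arc would be a cycle of length at least $3$), so $D$ is a bidirected tree and $UG(D)$ is bipartite; hence I may assume $c\ge 3$.

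The template to imitate is the proof of Theorem~\ref{Galli-Roy}, where each vertex of an acyclic digraph is coloured by the number of vertices on a longest directed path ending at it. This recipe does not survive verbatim when $D$ has cycles --- in a directed triangle all three vertices receive the same value, so reducing it modulo $c$ does not give a proper colouring --- so strong connectivity must be exploited more carefully. The plan is to build the colouring along an \emph{ear decomposition} $D=C_0\cup P_1\cup\cdots\cup P_t$ (which exists because $D$ is strong), where $C_0$ is a directed cycle and each ear $P_j$ is a directed path (or a cycle meeting the rest in a single vertex) whose end(s) lie in the subdigraph built so far and whose internal vertices are new. First colour $UG(C_0)$, which needs $2$ or $3\le c$ colours; then process the ears in order. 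For an ear that introduces new vertices, colour these greedily, one at a time, in the order in which they occur on the ear: at the moment such a vertex is coloured it has at most two already-coloured neighbours, namely its predecessor and successor on the ear, so $c\ge 3$ colours always leave a legal choice.

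The step I expect to be the main obstacle is the ears that introduce no new vertex --- the single arcs $uv$ whose endpoints are both already present --- because inserting such an arc can create a monochromatic edge of $UG(D)$ that the greedy procedure did nothing to prevent. This is precisely where the hypothesis enters: when $uv$ is inserted, the current digraph already contains a directed $v$--$u$ path, and this path together with $uv$ is a cycle, hence has length at most $c$, so $v$ is at distance at most $c-1$ from $u$. One therefore has to maintain, throughout the construction, a colouring that is automatically consistent with every such ``short return path followed by its chord''. Turning this into a concrete rule for colouring the vertex-introducing part --- so that no short chord is ever monochromatic --- is the technical heart of the argument; here I would follow Bondy's original proof in \cite{Bondy}, using the length bound on return paths to guarantee that the colouring can always be completed with $c$ colours.
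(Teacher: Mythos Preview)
The paper does not prove Theorem~\ref{ThmBondy}; it is quoted from \cite{Bondy} without proof and used only as a black box in the proof of Theorem~\ref{ThmMain}. So there is nothing in the paper to compare your argument against.

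That said, your proposal is not a self-contained proof either, and it has a genuine gap. You set up an ear decomposition and a greedy colouring of the new vertices along each ear, correctly observe that the trivial ears (single chords $uv$ between already-present vertices) are the obstruction, and then write that for this step you ``would follow Bondy's original proof in \cite{Bondy}''. That is circular: you are appealing to the very paper whose theorem you are meant to establish.

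Moreover, the strategy as sketched cannot be rescued by a small addition. Your greedy rule colours each new internal vertex of an ear using only the fact that it has at most two already-coloured neighbours; so three colours always suffice at that stage, and nothing in the construction ever refers to $c=\circ(D)$. The colouring you build is therefore completely insensitive to the circumference, and there is no reason a subsequently inserted chord $uv$ should have differently coloured ends. The observation that there is a $v$--$u$ path of length at most $c-1$ does not help: that bound constrains distances in $D$, not colours under a rule that ignored $c$. To make an ear-by-ear approach work one would have to tie the colour of every vertex to some quantity that the circumference actually controls; Bondy's argument does this, but it is not the greedy procedure you describe.
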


\begin{theorem} \label{ThmMain}
	Assume that there exist constants $k>0$ and $0<\alpha<1$ such that ${\rm mac}(H) \geq \frac{w_H(H)}{4} + k w_H(H)^{\alpha}$ for all acyclic digraphs $H=(V(H),A(H),w_H)\in {\cal D}_{\omega}(w)$.
	Let \AZ{$D=(V(D),A(D),w)$} be an arbitrary digraph \AZ{in ${\cal D}_{\omega}(w)$} and let $A_s$ \AZ{consist of} all arcs of $D$ contained within strong components of $D$ and let $A_a=A(D) \setminus A_s$.
	Then the following holds.
	
	\[
	{\rm mac}(D) \geq \frac{w(D)}{4} + \frac{k}{(4k+1) \cdot \circ(D)+1} \times (w(D[A_a])^{\alpha} + w(D[A_s])).
	\]
\end{theorem}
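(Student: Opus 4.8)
The plan is to split $D$ into its acyclic "condensation-like" structure and the strong parts, handle each with a random partition, and combine the two contributions carefully.

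\medskip

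\noindent\textbf{Setup.}
First I would form the strong components $C_1,\dots,C_t$ of $D$, noting that $A_s=\bigcup_i A(D[C_i])$ and that $A_a$ consists exactly of the arcs between distinct strong components. Contracting each $C_i$ to a single vertex yields an acyclic digraph $D'$ whose arcs correspond (with summed weights) to $A_a$; note $w(D')=w(D[A_a])$ and $D'\in\mathcal D_\omega(w(D[A_a]))$ after the trivial normalization (deleting weight-zero arcs / the minimum weight is $\ge 1$ since all arcs of $D$ have weight $\ge 1$). Apply the hypothesis to $D'$ to get an acyclic dicut $(X',Y')$ of $D'$ with $w(X',Y')\ge w(D[A_a])/4+k\,w(D[A_a])^{\alpha}$. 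Separately, for each strong component $C_i$, Theorem~\ref{ThmBondy} gives $\chi(D[C_i])\le \circ(D[C_i])\le \circ(D)$, so by Theorem~\ref{thm:basic}(d) there is a dicut of $D[C_i]$ of weight at least $\bigl(\tfrac14+\tfrac{1}{4\circ(D)}\bigr)w(D[C_i])$ (using $\tfrac{1}{4(\chi-1)}\ge\tfrac{1}{4\circ(D)}$ in the even case and $\tfrac{1}{4\chi}\ge\tfrac{1}{4\circ(D)}$ in the odd case); fix such a bipartition $(X_i,Y_i)$ of $C_i$.

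\medskip

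\noindent\textbf{Randomized combination.}
The key step is to randomize the "gluing." I would independently, for each strong component $C_i$, with probability $p$ orient it according to $(X_i,Y_i)$ (i.e.\ put $X_i$ on the left, $Y_i$ on the right) consistently with the side of $C_i$ dictated by $(X',Y')$, and with probability $1-p$ put the whole of $C_i$ on its $(X',Y')$-side but partitioned arbitrarily — more precisely, with probability $1-p$ we keep the component unsplit relative to the "between" structure, splitting uniformly at random inside. Actually the cleaner scheme: for each $C_i$, flip a coin; with probability $p$ use the good internal cut $(X_i,Y_i)$ after randomly assigning which of $X_i,Y_i$ goes to the left global side, taking care that this does not destroy the contribution of $A_a$ — so I would instead place all of $C_i$ on whichever global side $(X',Y')$ assigned to the contracted vertex, and \emph{within that side} use the cut $(X_i,Y_i)$ with probability $p$ and a uniformly random cut otherwise; this requires the between-component arcs to cross the global cut regardless of the internal choice. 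The point is that an arc in $A_a$ from $C_i$ (on side $X'$) to $C_j$ (on side $Y'$) always crosses, contributing its full weight; but wait, that would give coefficient $1$ not $1/4$ on the $A_a$ part, which is too strong. So the correct scheme must "waste" most of the $A_a$ advantage: with probability $\beta$ we use $(X',Y')$ to decide sides (giving $A_a$ its cut), and with probability $1-\beta$ we partition \emph{everything} — all of $V(D)$ — uniformly at random, which gives every arc probability $1/4$ and in particular every $A_s$-arc probability $1/4$, while on the $\beta$-branch we additionally run the good internal cuts to pick up the $A_s$-contribution $\tfrac{1}{4\circ(D)}w(D[A_s])$. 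Optimizing $\beta$ against the constraint that $A_s$-arcs get at least $1/4$ overall and $A_a$-arcs get at least $1/4$ overall, the $\beta=\Theta(1/\circ(D))$ regime is what produces the denominator $(4k+1)\circ(D)+1$.

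\medskip

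\noindent\textbf{Bookkeeping.}
Concretely: with probability $\beta$, independently choose sides by $(X',Y')$ and internal cuts by each $(X_i,Y_i)$; with probability $1-\beta$, put every vertex on the left with probability $1/2$ independently. On the first branch, every $A_a$-arc crosses, and every $A_s$-arc in $C_i$ crosses with probability $\ge \tfrac14+\tfrac1{4\circ(D)}$. On the second branch every arc crosses with probability exactly $1/4$. Hence
\[
\mathbb E[w(X,Y)] \ge \beta\Bigl(w(X',Y') + \bigl(\tfrac14+\tfrac1{4\circ(D)}\bigr)w(D[A_s])\Bigr) + (1-\beta)\tfrac14 w(D).
\]
Substituting $w(X',Y')\ge \tfrac14 w(D[A_a]) + k\,w(D[A_a])^\alpha$ and $w(D)=w(D[A_a])+w(D[A_s])$, the $\tfrac14 w(D)$ terms combine to exactly $\tfrac14 w(D)$ and the surplus is $\beta\bigl(k\,w(D[A_a])^\alpha + \tfrac{1}{4\circ(D)}w(D[A_s])\bigr)$. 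Choosing $\beta=\tfrac{1}{(4k+1)\circ(D)+1}$ (or whatever value makes the two coefficient factors uniformly at least $\tfrac{k}{(4k+1)\circ(D)+1}$; note $\tfrac{\beta}{4\circ(D)}\ge \tfrac{k}{(4k+1)\circ(D)+1}$ must hold, which dictates the constant) yields
\[
\mathbb E[w(X,Y)] \ge \frac{w(D)}{4} + \frac{k}{(4k+1)\circ(D)+1}\bigl(w(D[A_a])^\alpha + w(D[A_s])\bigr),
\]
and a partition achieving at least the expectation exists. The main obstacle I anticipate is getting the two-branch trade-off to land exactly on the stated denominator — in particular verifying $\tfrac{1}{4\circ(D)}\cdot\tfrac{1}{(4k+1)\circ(D)+1}\ge \tfrac{k}{(4k+1)\circ(D)+1}\cdot\tfrac{1}{\circ(D)}$ is false as written, so the honest computation needs $\beta$ chosen so that $\beta k \ge \tfrac{k}{(4k+1)\circ(D)+1}$ \emph{and} $\tfrac{\beta}{4\circ(D)} \ge \tfrac{k}{(4k+1)\circ(D)+1}$ simultaneously; the binding one is the second, forcing $\beta \ge \tfrac{4k\circ(D)}{(4k+1)\circ(D)+1}$, which is close to $1$, and then the $(1-\beta)\tfrac14 w(D)$ branch still suffices because the $\tfrac14$ coefficient is exact on both branches. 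I would double-check this algebra carefully, as it is the one genuinely delicate point; everything else (Bondy's theorem, Theorem~\ref{thm:basic}(d), the contraction, linearity of expectation) is routine.
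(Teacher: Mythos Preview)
You have all the right ingredients --- the condensation $D'$, the hypothesis applied to it, Bondy's theorem plus Theorem~\ref{thm:basic}(d) on each strong component --- but the combination step has a genuine gap, and it is not just the algebra of choosing $\beta$.

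On your $\beta$-branch you claim \emph{simultaneously} that ``every $A_a$-arc crosses'' (giving contribution $w(X',Y')$) and that ``every $A_s$-arc crosses with probability $\ge \tfrac14+\tfrac{1}{4\circ(D)}$''. These two assertions require incompatible partitions of $V(D)$. To force all $A_a$-arcs of the dicut $(X',Y')$ to cross, each strong component $C_i$ must sit entirely on its $(X',Y')$-side --- but then no arc of $A_s$ crosses at all. Conversely, if you split each $C_i$ according to its good internal cut $(X_i,Y_i)$, you recover the $A_s$-surplus, but an $A_a$-arc from $C_i$ to $C_j$ now crosses only when its tail happens to land in $X_i$ and its head in $Y_j$; this has nothing to do with $(X',Y')$, and you cannot claim $w(X',Y')$ for the $A_a$-contribution. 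So no single (random) partition realises the displayed lower bound for the $\beta$-branch, and hence your expectation inequality is unjustified. (Indeed, if it \emph{were} true you could take $\beta=1$ and get a strictly stronger theorem than the one stated, since $k\ge \tfrac{k}{(4k+1)l+1}$ and $\tfrac{1}{4l}\ge \tfrac{k}{(4k+1)l+1}$.)

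The repair is not to look for one partition doing both jobs, but to prove two separate lower bounds on the \emph{number} ${\rm mac}(D)$ and average those. Concretely: the lifted cut from $D'$ gives
\[
{\rm mac}(D)\ \ge\ \frac{w(D)-w(D[A_s])}{4}+k\,w(D[A_a])^{\alpha},
\]
while the random colour-class split inside each strong component (with independent choices across components, so $A_a$-arcs cross with probability exactly $1/4$) gives
\[
{\rm mac}(D)\ \ge\ \frac{w(D)}{4}+\frac{w(D[A_s])}{4\,\circ(D)}.
\]
Now take the convex combination with weights $\tfrac{1}{(4k+1)\circ(D)+1}$ and $\tfrac{(4k+1)\circ(D)}{(4k+1)\circ(D)+1}$; the $-\tfrac{w(D[A_s])}{4}$ from the first line is exactly cancelled, and what remains is the claimed bound. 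Your two-branch randomisation can be made to work if the two branches are \emph{these two partitions} (rather than ``a magic partition'' and ``uniform random''); the uniform-random branch is superfluous.
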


\begin{proof}
	Let $D$, $A_s$ and $A_a$ be defined as in the statement of the theorem.
	Let $S_1,S_2,\ldots,S_r$ denote the strong components of $D$ and let
	$D_a$ denote the acyclic digraph obtained from $D$ by contracting each $S_i$ into the vertex $u_i$ (note that $S_i$ may have order one).
	Furthermore, if there are multiple arcs from some $u_i$ to $u_j$ then replace these by a single arc with weight equal to the sum of the weights of all the multiple
	arcs.  Let $w_a$ denote this new weight function in $D_a$. Note that $w_a(D_a) = w(\AZ{D[A_a]})$ and thus
		\begin{align}  \label{gen1}
		  \Mac(D) &\geq {\Mac}(D_a)  \geq  \frac{w_a(D_a)}{4} + k \cdot w_a(D_a)^{\alpha} \\ \nonumber
		& =  \frac{w(D)-w(\AZ{D[A_s]})}{4} + k \cdot w(\AZ{D[A_a]})^{\alpha}. 
\end{align}
	
	We now prove an alternative bound using Theorem~\ref{ThmBondy}. For each strong component $S_i$ in $D$ we know that $\chi(S_i) \leq \circ(D)$ by 
	Theorem~\ref{ThmBondy}. Let $l=\circ(D)$ and let $C_1^i, C_2^i, \ldots, C_l^i$ be a partition of $V(S_i)$ such that each $C_j^i$ is an independent set in $D$ (some
	$C_j^i$ may be empty). Following a similar approach to that  in the proof of Theorem~\ref{thm:basic} (d) we construct a cut $(X,Y)$, by splitting $C_1^i, C_2^i, \ldots, C_l^i$ into two sets uniformly at random over all splits that differ by at most one. One part will be part of $X$ and the other part of $Y$.
	%Let $r=(r_1,r_2,\ldots,r_l)$ be a uniformly randomly chosen bit-string of length $l$ where the number of ones and number of zeros differ by at most one.
	%Then for each $j=1,2,\ldots,l$ assign all the vertices of $C_j^i$ to $X$ if $r_j=0$ and to $Y$ if $r_j=1$.
	Note that each arc of $S_i$ is an $(X,Y)$-arc with probability $\frac{1}{4} \times \frac{l+1}{l}$ if $l$ is odd 
	and $\frac{1}{4} \times \frac{l}{l-1}$ if $l$ is even. Furthermore for all arcs in $A_a$ it will be an $(X,Y)$-arc with 
	probability $1/4$ as for $I \not= I'$,  $C_j^I$ and $C_{j'}^{I'}$ are in $X$ with probability $1/2$ independently of each other. 
	%\AZ{In fact, if $r=(r_1,r_2,\ldots,r_l)$ and $l=2k+1$ then for each $i\in [l]$ the probability of $r_i=0$ is $\frac{{2k \choose k}+{2k \choose k-1}}{2{2k+1 \choose k}}=1/2$ by Pascal's formula (similarly, one can show or observe that this is also true when $l$ is even).} 
	Therefore,
	
	\begin{equation} \label{gen2} {\rm mac}(D) \geq  \frac{w(D)}{4} + \left( \frac{l+1}{4l} - \frac{1}{4} \right) w(D[A_s]) =  \frac{w(D)}{4} + \frac{w(D[A_s])}{4l}. 
	\end{equation} 
	
	Adding inequality \eqref{gen1} together with $4lk+l$ times inequality \eqref{gen2}, gives
	\[
	(4lk+l+1){\rm mac}(D) \geq  \frac{(4lk+l+1) \cdot w(D)}{4} + k \cdot w(D[A_a])^{\alpha} + k \cdot w(D[A_s]).
	\]
	This implies  
	\[
	{\rm mac}(D) \geq  \frac{w(D)}{4} + \frac{k}{(4k+1) \cdot l + 1} (w(D[A_a])^{\alpha} + w(D[A_s]) ),
	\]
	which completes the proof of the theorem.
\end{proof}

\begin{corollary} \label{CorMain}
	Assume that there exist constants $k>0$ and $0<\alpha<1$ such that ${\rm mac}(H) \geq \frac{w_H(H)}{4} + k w_H(H)^{\alpha}$ for all  acyclic digraphs \AZ{$H\in {\cal D}_{\omega}(w)$}.
	Let \AZ{$D=(V(D),A(D),w)$} be an arbitrary digraph in ${\cal D}_{\omega}(w)$ with $\circ(D) \leq l$ for some fixed $l$. 
	Then the following holds.
	
	\[
	{\rm mac}(D) \geq \frac{w(D)}{4} + \frac{k}{(4k+1) \cdot l+1} \times w(D)^{\alpha}.
	\]
\end{corollary}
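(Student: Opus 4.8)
The plan is to obtain the corollary as an immediate specialisation of Theorem~\ref{ThmMain}. First I would apply Theorem~\ref{ThmMain} to $D$, which gives
\[
{\rm mac}(D) \geq \frac{w(D)}{4} + \frac{k}{(4k+1)\cdot \circ(D)+1}\bigl(w(D[A_a])^{\alpha}+w(D[A_s])\bigr).
\]
Since $k>0$, the map $t\mapsto \frac{k}{(4k+1)t+1}$ is decreasing for $t>0$, so the hypothesis $\circ(D)\le l$ yields $\frac{k}{(4k+1)\circ(D)+1}\ge \frac{k}{(4k+1)l+1}$. As the bracketed quantity is nonnegative, it therefore suffices to show
\[
w(D[A_a])^{\alpha}+w(D[A_s]) \;\ge\; w(D)^{\alpha}.
\]

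To prove this, write $a=w(D[A_a])$ and $s=w(D[A_s])$ and note that $a+s=w(D)$, because $\{A_a,A_s\}$ partitions $A(D)$. Since $0<\alpha<1$, the function $t\mapsto t^{\alpha}$ is subadditive on $\mathbb{R}_{\ge 0}$, hence $w(D)^{\alpha}=(a+s)^{\alpha}\le a^{\alpha}+s^{\alpha}$. The one place where the hypothesis $D\in{\cal D}_{\omega}(w)$ enters is that every arc of $D$ has weight at least $1$, so $s$ equals $0$ or is at least $1$; in either case $s\ge s^{\alpha}$. Combining, $a^{\alpha}+s\ge a^{\alpha}+s^{\alpha}\ge w(D)^{\alpha}$, and feeding this back into the displayed bound from Theorem~\ref{ThmMain} (together with the coefficient comparison above) gives exactly
\[
{\rm mac}(D) \geq \frac{w(D)}{4} + \frac{k}{(4k+1)\cdot l+1}\, w(D)^{\alpha},
\]
as required.

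There is no real obstacle here; the corollary is a routine consequence of Theorem~\ref{ThmMain}, and its entire content is the inequality $a^{\alpha}+s\ge (a+s)^{\alpha}$. The only subtlety worth flagging is that this inequality is \emph{false} in general when $s\in(0,1)$ (there $s^{\alpha}>s$), and is rescued precisely by the ``every arc has weight $\ge 1$'' condition defining ${\cal D}_{\omega}(w)$; without it the constant in front of $w(D)^{\alpha}$ could not be preserved. The degenerate case in which $D$ has no arcs is trivial, since then both sides of the claimed inequality equal $0$.
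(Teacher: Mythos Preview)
Your argument is correct and is essentially the paper's own proof: apply Theorem~\ref{ThmMain}, replace $\circ(D)$ by $l$ in the coefficient, and reduce the remaining task to the inequality $a^{\alpha}+s\ge (a+s)^{\alpha}$ via subadditivity of $t\mapsto t^{\alpha}$ together with $s\ge s^{\alpha}$. You are in fact more precise than the paper about why $s\ge s^{\alpha}$ holds (it needs $s=0$ or $s\ge 1$, guaranteed by the ``every arc has weight $\ge 1$'' hypothesis), whereas the paper only cites ``$w(D)\ge 1$''.
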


\begin{proof}
	Let $A_s$ contain all arcs of $D$ contained within strong components of $D$.
	The corollary holds as Theorem~\ref{ThmMain} implies the following (as $w(D) \geq 1$ and $0< \alpha < 1$).
	
	\[
	\begin{array}{rcl}
		{\rm mac}(D) & \geq & \frac{w(D)}{4} + \frac{k}{(4k+1) \cdot circ(D)+1} \times (w(D-A_s)^{\alpha} + w(D[A_s])) \\
		& \geq & \frac{w(D)}{4} + \frac{k}{(4k+1) \cdot l+1} \times (w(D)^{\alpha} - w(D[A_s])^{\alpha} + w(D[A_s])) \\
		& \geq & \frac{w(D)}{4} + \frac{k}{(4k+1) \cdot l+1} \times w(D)^{\alpha}, \\
	\end{array}
	\]
	which completes the proof.
\end{proof}
\AZ{
	By using the above result and Theorem \ref{thm:dag}, we have the following:
\begin{theorem}
	For any integer $l>0$. There exists a constant $k(l)>0$ such that the following holds.
	For all arc-weighted digraphs $D$ where each arc has weight at least one and $\circ(D)\leq l$ we have ${\rm mac}(D) \geq \frac{w(D)}{4} + k(l) \cdot w(D)^{0.6}$.
\end{theorem}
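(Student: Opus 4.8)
The plan is to obtain this statement as an immediate consequence of Theorem~\ref{thm:dag} and Corollary~\ref{CorMain}, so no new argument is really needed. Theorem~\ref{thm:dag} asserts exactly that every acyclic $H\in{\cal D}_{\omega}(w)$ satisfies ${\rm mac}(H)\ge \frac{w_H(H)}{4}+\frac{1}{24}\,w_H(H)^{0.6}$; in other words the hypothesis of Corollary~\ref{CorMain} holds with the explicit constants $k=\frac{1}{24}$ and $\alpha=0.6$. First I would record this, and then apply Corollary~\ref{CorMain} with these constants and with the circumference bound $\circ(D)\le l$. This gives, for every $D\in{\cal D}_{\omega}(w)$ with $\circ(D)\le l$,
\[
{\rm mac}(D)\ \ge\ \frac{w(D)}{4}+\frac{1/24}{(4\cdot\frac{1}{24}+1)\,l+1}\,w(D)^{0.6}\ =\ \frac{w(D)}{4}+\frac{1}{28l+24}\,w(D)^{0.6},
\]
so one may take $k(l)=\frac{1}{28l+24}>0$, which visibly depends only on $l$.

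A couple of boundary points deserve a sentence of care, since the theorem is phrased for ``arc-weighted digraphs where each arc has weight at least one'' rather than literally for ${\cal D}_{\omega}(w)$. If $D$ has no arcs then $w(D)=0$ and the claimed inequality reads ${\rm mac}(D)=0\ge 0$, which is trivial; and if $D$ has at least one arc then $w(D)\ge 1$, so $D\in{\cal D}_{\omega}(w)$ with $w=w(D)$ and Corollary~\ref{CorMain} applies verbatim. I would also note, as in the remark following Theorem~\ref{thm:dag}, that the hypothesis ``every arc has weight at least one'' is precisely what keeps $w(D)^{0.6}\le w(D)$ an honest lower-order term, so no extra assumption is required.

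There is essentially no genuine obstacle: all the substance lies in the two results being combined --- Theorem~\ref{thm:dag} (the acyclic lower bound, via the longest-path/matching dichotomy using Corollary~\ref{lowerBoundMatching} and Theorem~\ref{lower}) and Theorem~\ref{ThmMain}/Corollary~\ref{CorMain} (the reduction trading cycles for a proper colouring through Bondy's theorem, Theorem~\ref{ThmBondy}). The only thing to be careful about is that the constants line up correctly and that the degenerate case $w(D)<1$ is dismissed; the ``hard part'', such as it is, is just this bookkeeping.
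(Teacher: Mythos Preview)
Your proposal is correct and matches the paper's own approach: the theorem is stated there as an immediate consequence of Theorem~\ref{thm:dag} (giving $k=\tfrac{1}{24}$, $\alpha=0.6$) together with Corollary~\ref{CorMain}. Your explicit constant $k(l)=\tfrac{1}{28l+24}$ and your handling of the degenerate case $w(D)=0$ are fine and go slightly beyond what the paper spells out.
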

}

	\newpage
	\appendix
	
	{\bf \huge Appendix} 
%	\section{Exact Values of $c_{\nu}$ for Small $\nu$}\label{apdx: small nu}
	%Let $D$ be an acyclic digraph whose longest path has order \AZr{at most} $\nu$.
	 
\vspace{4mm}
	
	The following holds for small $\nu$.
	
\vspace{3mm}
	
\noindent	{\bf Theorem A}	 {\em		$c_2 = 1$, $c_3=c_4=\frac{1}{2}$, $c_5=c_6 = \frac{2}{5}$, $c_7=\frac{3}{8}$ and $c_8=\frac{4}{11}$.}

	\begin{proof}
		Let $D$ be an acyclic digraph whose longest path has order $\nu$, where $\nu \in \{2,3,4,5,6,7,8\}$.
		Let $S_1$ denote all sources in $D$. %Let $D_2 = D - S_1$ and let $S_2$ be all sources in $D_2$.
		For all $i=2,3,\ldots,\nu$ let $D_i = D_{i-1} - S_{i-1}$ and let $S_i$ be all sources in $D_i$.
		
		Note that $V(D) = S_1 \cup \cdots \cup S_\nu$ and every arc in $D$ is a $(S_i,S_j)$-arc for some $i<j$.
		Let $D'$ be obtained from $D$ by contracting each $S_i$ into a vertex $s_i$ for all $i=1,\ldots,\nu$.
		Let the weight of an arc $s_i s_j$ in $D'$ be $w(s_i s_j) = w(S_i,S_j)$ (i.e. the sum of the weights
		of all $(S_i,S_j)$-arcs).
		
		Note that ${\rm mac}(D) \geq {\rm mac}(D')$ as any dicut $(X',Y')$ in $D'$ can be made into a dicut of the same weight in $D$ by expanding each $s_i$ to $S_i$.
		Let $l_2 = 1$, $l_3=l_4=\frac{1}{2}$, $l_5=l_6=\frac{2}{5}$, $l_7=\frac{3}{8}$ and $l_8=\frac{4}{11}$. Clearly $c_2=1$, as dicut $(\{s_1\},\{s_2\})$ contains all arcs in $D'$ (as $A(D')=\{s_1s_2\}$), and therefore ${\rm mac}(D') = w(D')$.
		In Figure~\ref{fig:upperBcr} we see examples of acyclic digraphs, $D_\nu$, with maximum path order $\nu$, where ${\rm mac}(D_\nu) = l_\nu \cdot  w(D_\nu)$ for $\nu=3,4,5,6,7,8$ so 
		we only need to show that ${\rm mac}(D') \geq l_\nu \cdot w(D')$ for all $\nu=3,4,5,6,7,8$ to complete the proof.
		
		If $\nu=3$ then consider the dicuts $C_1^3=(\{s_1,s_2\},\{s_3\})$ and  $C_2^3=(\{s_1\},\{s_2,s_3\})$. Each arc in $A(D')$ belongs to at least one of the 
		dicuts, so ${\rm mac}(D') \geq w(D')/2$.
		
		If $\nu=4$ then consider the dicuts $C_1^4=(\{s_1,s_2\},\{s_3,s_4\})$ and  $C_2^4=(\{s_1,s_3\},\{s_2,s_4\})$. As can be seen in 
		the table below, every arc in $A(D')$ belongs to at least one of the  dicuts, so ${\rm mac}(D') \geq w(D')/2$.
		
		\begin{center}
			\begin{tabular}{|c|c|c||c|c|c|c|c|c|} \hline
				
				\multicolumn{3}{|c||}{Dicut $C_i^4=(X,Y)$} & \multicolumn{6}{|c|}{Contains the following arcs} \\
				$i$ & $X$       & $Y$       & $s_1s_2$ & $s_2s_3$ & $s_3s_4$ & $s_1s_3$ & $s_2s_4$ &  $s_1s_4$ \\ \hline \hline
				1   & $s_1,s_2$ & $s_3,s_4$ &          &    +     &          &    +     &    +     &     +     \\ \hline
				2   & $s_1,s_3$ & $s_2,s_4$ &    +     &          &    +     &          &          &     +     \\ \hline
			\end{tabular}
		\end{center}
		
		If $\nu=5$ then consider the dicuts $C_1^5=(\{s_1,s_2,s_3\},\{s_4,s_5\})$,$C_2^5=(\{s_1,s_2\}, \{s_3,s_4,s_5\})$, $C_3^5=(\{s_1,s_3,s_4\},\{s_2,s_5\})$, $C_4^5=(\{s_1,s_3\},\{s_2,s_4,s_5\})$ and
		$C_5^5=(\{s_1,s_2,s_4\},\{s_3,s_5\})$. 
		As can be seen in
		the below table every arc in $A(D')$ belongs to at least two of the five dicuts, so ${\rm mac}(D') \geq 2w(D')/5$.
		
		\begin{center}
			\begin{tabular}{|c|c|c||c|c|c|c|c|c|c|c|c|c|} \hline
				\multicolumn{3}{|c||}{Dicut $C_i^5=(X,Y)$} & \multicolumn{10}{|c|}{Contains the following arcs} \\
				$i$ & $X$           & $Y$           & \sX{$s_1s_2$} & \sX{$s_2s_3$} & \sX{$s_3s_4$} & \sX{$s_4s_5$} & \sX{$s_1s_3$} &  \sX{$s_2s_4$} & \sX{$s_3s_5$} & \sX{$s_1s_4$} & \sX{$s_2s_5$} & \sX{$s_1s_5$}  \\ \hline \hline
				1   & $s_1,s_2,s_3$ & $s_4,s_5$     &               &               &      +        &               &               &          +     &        +      &        +      &        +      &        +       \\ \hline
				2   & $s_1,s_2$     & $s_3,s_4,s_5$ &               &       +       &               &               &       +       &          +     &               &        +      &        +      &        +       \\ \hline
				3   & $s_1,s_3,s_4$ & $s_2,s_5$     &      +        &               &               &      +        &               &                &        +      &               &               &        +       \\ \hline
				4   & $s_1,s_3$     & $s_2,s_4,s_5$ &      +        &               &      +        &               &               &                &        +      &        +      &               &        +       \\ \hline
				5   & $s_1,s_2,s_4$ & $s_3,s_5$     &               &       +       &               &      +        &        +      &                &               &               &        +      &        +       \\ \hline
			\end{tabular}
		\end{center}
		
		If $\nu=6$ then consider the dicuts 
		$C_1^6=(\{s_1,s_2,        s_5\},\{s_3,s_4,s_6\})$,
		$C_2^6=(\{s_1,    s_3,s_4    \},\{s_2,s_5,s_6\})$,
		$C_3^6=(\{s_1,s_2,s_3        \},\{s_4,s_5,s_6\})$,
		$C_4^6=(\{s_1,    s_3,    s_5\},\{s_2,s_4,s_6\})$ and
		$C_5^6=(\{s_1,s_2,   s_4     \},\{s_3,s_5,s_6\})$.
		As can be seen in
		the below table every arc in $A(D')$ belongs to at least two of the five dicuts, so ${\rm mac}(D') \geq 2w(D')/5$.
		
		\begin{center}
			\begin{tabular}{|c|c|c||c|c|c|c|c|c|c|c|c|c|c|c|c|c|c|} \hline
				\multicolumn{3}{|c||}{Dicut $C_i^6=(X,Y)$} & \multicolumn{15}{|c|}{Contains the following arcs} \\
				\multicolumn{3}{|c||}{ }            & \tX{$s_1$} & \tX{$s_2$} & \tX{$s_3$} & \tX{$s_4$} & \tX{$s_5$} & \tX{$s_1$} & \sX{$s_2$} & \sX{$s_3$} & \sX{$s_4$} & \sX{$s_1$} & \tX{$s_2$} & \sX{$s_3$} & \sX{$s_1$} & \sX{$s_2$} & \sX{$s_1$}   \\ 
				$i$ & $X$           & $Y$           & \tX{$s_2$} & \tX{$s_3$} & \tX{$s_4$} & \tX{$s_5$} & \tX{$s_6$} & \tX{$s_3$} & \sX{$s_4$} & \sX{$s_5$} & \sX{$s_6$} & \sX{$s_4$} & \tX{$s_5$} & \sX{$s_6$} & \sX{$s_5$} & \sX{$s_6$} & \sX{$s_6$}   \\ \hline \hline
				1   & $s_1,s_2,s_5$ & $s_3,s_4,s_6$ &            &   \tX{+}   &            &            &   \tX{+}   &   \tX{+}   &   \tX{+}   &            &            &   \tX{+}   &            &            &            &  \tX{+}    & \tX{+} \\ \hline
				2   & $s_1,s_3,s_4$ & $s_2,s_5,s_6$ &   \tX{+}   &            &            &   \tX{+}   &            &            &            &  \tX{+}    &  \tX{+}    &            &            &  \tX{+}    &  \tX{+}    &            & \tX{+} \\ \hline
				3   & $s_1,s_2,s_3$ & $s_4,s_5,s_6$ &            &            &   \tX{+}   &            &            &            &   \tX{+}   &  \tX{+}    &            &   \tX{+}   &  \tX{+}    &  \tX{+}    &  \tX{+}    &  \tX{+}    & \tX{+} \\ \hline
				4   & $s_1,s_3,s_5$ & $s_2,s_4,s_6$ &   \tX{+}   &            &   \tX{+}   &            &   \tX{+}   &            &            &            &            &   \tX{+}   &            &  \tX{+}    &            &            & \tX{+} \\ \hline
				5   & $s_1,s_2,s_4$ & $s_3,s_5,s_6$ &            &   \tX{+}   &            &   \tX{+}   &            &   \tX{+}   &            &            &  \tX{+}    &            &  \tX{+}    &            &  \tX{+}    &  \tX{+}    & \tX{+} \\ \hline
			\end{tabular}
		\end{center}
		
		We now consider the case when $\nu=7$. Define the following dicuts.
		
		\begin{center}
			\begin{tabular}{|c||c|c|c|c|c|c|c|} \hline
				Dicut $C_i^7=(X,Y)$ & \multicolumn{7}{|c|}{$X$ contains vertices} \\Â 
				& $s_1$ & $s_2$ & $s_3$ & $s_4$ & $s_5$ & $s_6$ & $s_7$ \\ \hline \hline
				$C_1^7=(\{s_1,s_2,s_3,    s_5     \},\{s_4,s_6,s_7\})$       &   +   &   +   &   +   &       &   +   &       &       \\ \hline 
				$C_2^7=(\{s_1,s_2,s_3,    s_6 \},\{s_4,s_5,s_7\})$   &   +   &   +   &   +   &       &       &   +   &       \\ \hline
				$C_3^7=(\{s_1,s_2,    s_4,s_5     \},\{s_3,s_6,s_7\})$       &   +   &   +   &       &   +   &   +   &       &       \\ \hline
				$C_4^7=(\{s_1,s_2,    s_4         \},\{s_3,s_5,s_6,s_7\})$   &   +   &   +   &       &   +   &       &       &       \\ \hline
				$C_5^7=(\{s_1,s_2,            s_6 \},\{s_3,s_4,s_5,s_7\})$   &   +   &   +   &       &       &       &   +   &       \\ \hline
				$C_6^7=(\{s_1,    s_3,s_4,    s_6 \},\{s_2,s_5,s_7\})$       &   +   &       &   +   &   +   &       &   +   &       \\ \hline
				$C_7^7=(\{s_1,    s_3,s_4         \},\{s_2,s_5,s_6,s_7\})$   &   +   &       &   +   &   +   &       &       &       \\ \hline
				$C_8^7=(\{s_1,    s_3,    s_5     \},\{s_2,s_4,s_6,s_7\})$       &   +   &       &   +   &       &   +   &       &       \\ \hline
			\end{tabular}
		\end{center}
		
		Note that $\sum_{i=1}^8 w(C_i^7) \geq 3w(D')$, so ${\rm mac}(D') \geq 3w(D')/8$.
		
		We finally consider the case when $\nu=8$. Define the following dicuts.
		
		\begin{center}
			\begin{tabular}{|c||c|c|c|c|c|c|c|c|} \hline
				Dicut $C_i^8=(X,Y)$ & \multicolumn{8}{|c|}{$X$ contains vertices} \\Â 
				& $s_1$ & $s_2$ & $s_3$ & $s_4$ & $s_5$ & $s_6$ & $s_7$ & $s_8$ \\ \hline \hline
				$C_1^8=   (\{s_1,s_2,s_3,    s_5         \},\{s_4,s_6,s_7,s_8\})$   &   +   &   +   &   +   &       &   +   &       &       & \\ \hline
				$C_2^8=   (\{s_1,s_2,s_3,        s_6     \},\{s_4,s_5,s_7,s_8\})$   &   +   &   +   &   +   &       &       &   +   &       & \\ \hline
				$C_3^8=   (\{s_1,s_2,s_3,            s_7 \},\{s_4,s_5,s_6,s_8\})$   &   +   &   +   &   +   &       &       &       &   +   & \\ \hline
				$C_4^8=   (\{s_1,s_2,    s_4,s_5         \},\{s_3,s_6,s_7,s_8\})$   &   +   &   +   &       &   +   &   +   &       &       & \\ \hline
				$C_5^8=   (\{s_1,s_2,    s_4,    s_6     \},\{s_3,s_5,s_7,s_8\})$   &   +   &   +   &       &   +   &       &   +   &       & \\ \hline
				$C_6^8=   (\{s_1,s_2,    s_4,        s_7 \},\{s_3,s_5,s_6,s_8\})$   &   +   &   +   &       &   +   &       &       &   +   & \\ \hline
				$C_7^8=   (\{s_1,s_2,         s_5,s_6     \},\{s_3,s_4,s_7,s_8\})$   &   +   &   +   &       &       &   +   &   +   &       & \\ \hline
				$C_8^8=   (\{s_1,    s_3,s_4,s_5         \},\{s_2,s_6,s_7,s_8\})$   &   +   &       &   +   &   +   &   +   &       &       & \\ \hline
				$C_9^8=   (\{s_1,    s_3,s_4,    s_6     \},\{s_2,s_5,s_7,s_8\})$   &   +   &       &   +   &   +   &       &   +   &       & \\ \hline
				$C_{10}^8=(\{s_1,    s_3,s_4,        s_7 \},\{s_2,s_5,s_6,s_8\})$   &   +   &       &   +   &   +   &       &       &   +   & \\ \hline
				$C_{11}^8=(\{s_1,    s_3,    s_5,    s_7 \},\{s_2,s_4,s_6,s_8\})$   &   +   &       &   +   &       &   +   &       &   +   & \\ \hline
			\end{tabular}
		\end{center}
		
		Note that $\sum_{i=1}^{11} w(C_i^8) \geq 4w(D')$, so ${\rm mac}(D') \geq 4w(D')/11$.

		\begin{figure}[th]
			\begin{center}
				\begin{tabular}{|c|c|} \hline
					\tikzstyle{vertexA}=[circle,draw, minimum size=8pt, scale=0.9, inner sep=0.9pt]
					\begin{tikzpicture}[scale=0.6]
						\node (s1) at (1,2) [vertexA]{$s_1$};
						\node (s2) at (3,2) [vertexA]{$s_2$};
						\node (s3) at (5,2) [vertexA]{$s_3$};
						\draw[->, line width=0.03cm] (s1) to (s2); 
						\draw[->, line width=0.03cm] (s2) to (s3);
						\node at (0,0) {\mbox{Â }};
						\node at (6,4) {\mbox{Â }};
					\end{tikzpicture} & 
					\tikzstyle{vertexA}=[circle,draw, minimum size=8pt, scale=0.9, inner sep=0.9pt]
					\begin{tikzpicture}[scale=0.6]
						\node (s1) at (3,3) [vertexA]{$s_1$};
						\node (s2) at (1,2) [vertexA]{$s_2$};
						\node (s3) at (3,0.5) [vertexA]{$s_3$};
						\node (s4) at (5.5,0.5) [vertexA]{$s_4$};
						\node (s5) at (7.5,2) [vertexA]{$s_5$};
						\node (s6) at (5.5,3) [vertexA]{$s_6$};
						
						\draw[->, line width=0.07cm] (s1) to (s2);
						\draw[->, line width=0.03cm] (s2) to (s3);
						\draw[->, line width=0.03cm] (s3) to (s4);
						\draw[->, line width=0.03cm] (s4) to (s5);
						\draw[->, line width=0.03cm] (s5) to (s6);
						
						\draw[->, line width=0.03cm] (s1) to (s3);
						\draw[->, line width=0.03cm] (s2) to (s4);
						\draw[->, line width=0.03cm] (s2) to (s5);
						\draw[->, line width=0.03cm] (s3) to (s5);
						\node at (0,0) {\mbox{Â }};
						\node at (8.5,4) {\mbox{Â }};
					\end{tikzpicture} \\ \hline
					\tikzstyle{vertexA}=[circle,draw, minimum size=8pt, scale=0.9, inner sep=0.9pt]
					\begin{tikzpicture}[scale=0.6]
						\node (s1) at (1,3) [vertexA]{$s_1$};
						\node (s2) at (2,1) [vertexA]{$s_2$};
						\node (s3) at (4,1) [vertexA]{$s_3$};
						\node (s4) at (5,3) [vertexA]{$s_4$};
						\draw[->, line width=0.03cm] (s1) to (s2);
						\draw[->, line width=0.03cm] (s2) to (s3);
						\draw[->, line width=0.03cm] (s3) to (s4);
						\draw[->, line width=0.03cm] (s1) to (s3);
						\node at (0,0) {\mbox{Â }};
						\node at (6,4) {\mbox{Â }};
					\end{tikzpicture} &  
					\tikzstyle{vertexA}=[circle,draw, minimum size=8pt, scale=0.9, inner sep=0.9pt]
					\begin{tikzpicture}[scale=0.6]
						\node (s1) at (3,3) [vertexA]{$s_1$};
						\node (s2) at (1,2.5) [vertexA]{$s_2$};
						\node (s3) at (1,0.5) [vertexA]{$s_3$};
						\node (s4) at (4.25,0.5) [vertexA]{$s_4$};
						\node (s5) at (7.5,0.5) [vertexA]{$s_5$};
						\node (s6) at (7.5,2.5) [vertexA]{$s_6$};
						\node (s7) at (5.5,3) [vertexA]{$s_7$};

						\draw[->, line width=0.03cm] (s1) to (s2);
						\draw[->, line width=0.03cm] (s2) to (s3);
						\draw[->, line width=0.03cm] (s3) to (s4);
						\draw[->, line width=0.03cm] (s4) to (s5);
						\draw[->, line width=0.03cm] (s5) to (s6);
						\draw[->, line width=0.03cm] (s6) to (s7);
						
						\draw[->, line width=0.03cm] (s2) to (s4);
						\draw[->, line width=0.03cm] (s4) to (s6);
						\node at (0,0) {\mbox{Â }};
						\node at (8.5,4) {\mbox{Â }};
					\end{tikzpicture} \\ \hline
					\tikzstyle{vertexA}=[circle,draw, minimum size=8pt, scale=0.9, inner sep=0.9pt]
					\begin{tikzpicture}[scale=0.6]
						\node (s1) at (1,3) [vertexA]{$s_1$};
						\node (s2) at (2,1) [vertexA]{$s_2$};
						\node (s3) at (3,2.5) [vertexA]{$s_3$};
						\node (s4) at (4,1) [vertexA]{$s_4$};
						\node (s5) at (5,3) [vertexA]{$s_5$};
						
						\draw[->, line width=0.03cm] (s1) to (s2);
						\draw[->, line width=0.03cm] (s2) to (s3);
						\draw[->, line width=0.03cm] (s3) to (s4);
						\draw[->, line width=0.03cm] (s4) to (s5);
						\draw[->, line width=0.03cm] (s2) to (s4);
						\node at (0,0) {\mbox{Â }};
						\node at (6,4) {\mbox{Â }};
					\end{tikzpicture}  & 
					\tikzstyle{vertexA}=[circle,draw, minimum size=8pt, scale=0.9, inner sep=0.9pt]
					\begin{tikzpicture}[scale=0.6]
						\node (s1) at (1,3) [vertexA]{$s_1$};
						\node (s2) at (3,3) [vertexA]{$s_2$};
						\node (s3) at (1,1.75) [vertexA]{$s_3$};
						\node (s4) at (2.6,0.5) [vertexA]{$s_4$};
						\node (s5) at (5.9,0.5) [vertexA]{$s_5$};
						\node (s6) at (7.5,1.75) [vertexA]{$s_6$};
						\node (s7) at (5.5,3) [vertexA]{$s_7$};
						\node (s8) at (7.5,3) [vertexA]{$s_8$};

						\draw[->, line width=0.07cm] (s1) to (s2);
						\draw[->, line width=0.07cm] (s2) to (s3);
						\draw[->, line width=0.07cm] (s3) to (s4);
						\draw[->, line width=0.07cm] (s4) to (s5);
						\draw[->, line width=0.07cm] (s5) to (s6);
						\draw[->, line width=0.07cm] (s6) to (s7);
						\draw[->, line width=0.07cm] (s7) to (s8);
						
						\draw[->, line width=0.07cm] (s2) to (s4);
						\draw[->, line width=0.07cm] (s5) to (s7);
						\draw[->, line width=0.03cm] (s2) to (s5);
						\draw[->, line width=0.03cm] (s3) to (s5);
						\draw[->, line width=0.03cm] (s4) to (s6);
						\draw[->, line width=0.03cm] (s4) to (s7);
						\node at (0,0) {\mbox{Â }};
						\node at (8.5,4) {\mbox{Â }};
					\end{tikzpicture}  \\ \hline
				\end{tabular}
				\caption{Digraphs giving upper bounds on $c_\nu$ for $\nu \in \{3,4,5,6,7,8\}$. The thick arcs have weight two and all other arcs have weight one.} \label{fig:upperBcr}
		\end{center} \end{figure}
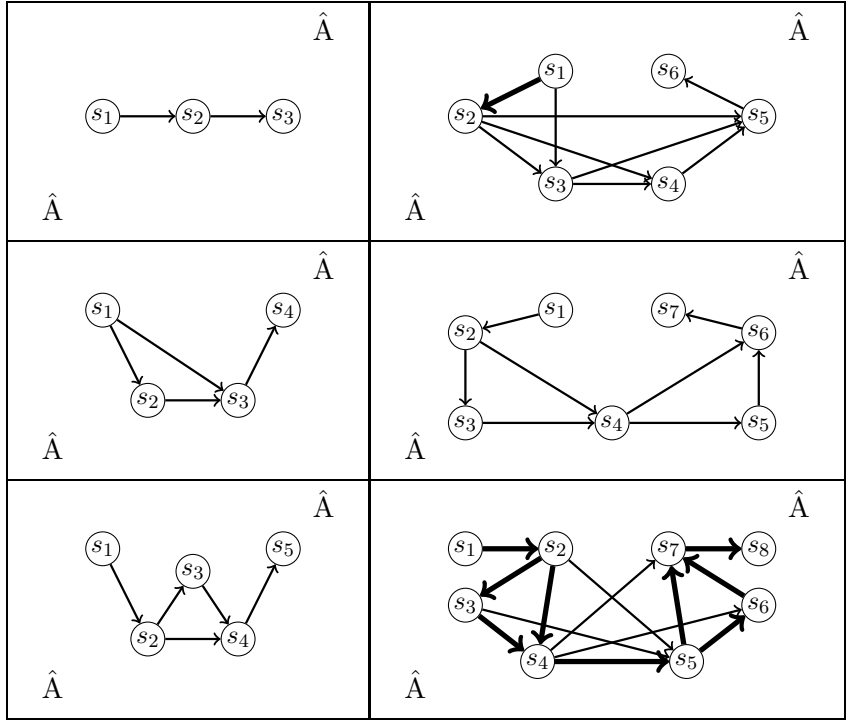

	\end{proof}

	Recall that $c_2 = 1$, $c_3=c_4=0.5$, $c_5=c_6 = 0.4$, $c_7=0.375$ and $c_8=\frac{4}{11} \approx 0.363636$.
	Using a computer, we can also show that $c_9 = \frac{13}{37} \approx 0.35135$ and $c_{10} = \frac{9}{26} \approx 0.34615$ and $c_{11} = \frac{31}{92} \approx 0.33696$.
	
\end{document}